\theoremstyle{plain}
\newtheorem{prop}{Proposition}[section]
\newtheorem{coro}[prop]{Corollary}
\newtheorem{fact}[prop]{Fact}
\newtheorem{lemm}[prop]{Lemma}
\newtheorem{ques}[prop]{Question}
\theoremstyle{definition}
\newtheorem{defi}[prop]{Definition}
\newtheorem{exam}[prop]{Example}
\newtheorem{rema}[prop]{Remark}
\newenvironment{prproof}
{\begin{proof}[Proof (sketch)]}
{\end{proof}}
\numberwithin{equation}{section}
\def\Reff#1; #2; #3; #4; #5; #6; #7\par{%
\bibitem{#1} #2, {\it #3}, #4 {\bf #5} (#6) #7}
\def\Ref#1; #2; #3; #4\par{%
\bibitem{#1} #2, {\it #3}, #4}
\renewcommand\aa{a}
\renewcommand\AA{A}
\newcommand\aaa[1]{a_{#1}}
\newcommand\act{\mathbin{\scriptscriptstyle\bullet}}
\newcommand\Aut{\mathsf{Aut}}
\newcommand\bb{b}
\newcommand\BB{B}
\newcommand\Bi{B_\infty}
\newcommand\Bii{B_\infty^{\mathsf{sm}}}
\newcommand\Bisp{B_\infty^{\mathsf{sp}}}
\newcommand\BP[1]{B^{\scriptscriptstyle+}_{#1}}
\newcommand\BPi{\BP\infty}
\newcommand\BR[1]{B_{#1}}
\newcommand\br{\beta}
\newcommand\brr{\gamma}
\newcommand\BVhat{\widehat{B\HS{-0.2}V}}
\newcommand\CBi{\CBR\infty}
\newcommand\CBR[1]{C\HS{-0.4}B_{#1}}
\newcommand\cc{c}
\newcommand\cl[1]{\mathsf{cl}(#1)}
\newcommand\coIm{\mathsf{coIm}}
\newcommand\comp{\mathbin{\scriptscriptstyle\circ}}
\newcommand\compl[1]{c(#1)}
\newcommand\concat{\mathbin{{}^{\frown}}}
\newcommand\dist{d}
\newcommand\EBi{E\HS{-0.4}B_\infty}
\newcommand\ee{e}
\newcommand\eqLD{\mathrel{=_{\scriptscriptstyle\mathsf{LD}}}\nobreak}
\newcommand\ff{f}
\newcommand\Fi{F_\infty}
\renewcommand\ge{\geqslant\nobreak}
\renewcommand\gg{g}
\newcommand\GG{G}
\newcommand\GLD{G_{\LD}}
\newcommand\hh{h}
\newcommand\HS[1]{\leavevmode\null\hspace{#1mm}}
\newcommand\id{\mathsf{id}}
\newcommand\ii{i}
\newcommand\Ii{\mathfrak{I}_\infty}
\newcommand\II{I}
\newcommand\Iisp{\mathfrak{I}_\infty^{\mathsf{sp}}}
\renewcommand\Im{\mathsf{Im}}
\newcommand\inv{^{-1}}
\newcounter{ITEM}
\newcommand\ITEM[1]{\setcounter{ITEM}{#1}\leavevmode\hbox{\rm(\roman{ITEM})}}
\newcommand\Iter{\mathsf{Iter}}
\newcommand\jj{j}
\newcommand\Ker{\mathsf{Ker}}
\newcommand\kk{k}
\newcommand\LD{\mathrm{LD}}
\newcommand\LDop[1]{\hbox{\scshape{ld}}_{#1}}
\renewcommand\le{\leqslant\nobreak}
\newcommand\LL{L}
\newcommand\mm{m}
\newcommand\mults{\mathbin{\scriptstyle\sqsupset^*}\nobreak}
\newcommand\nn{n}
\newcommand\NN{N}
\newcommand\NNNN{\mathbb{N}}
\newcommand\op{\mathbin{\triangleright}\nobreak}
\newcommand\opb{\mathbin{\overline{\op}}}
\newcommand\opG{\mathbin{\ast}}
\newcommand\oph{\mathrel{\widehat\op}}
\newcommand\opR{\mathbin{\triangleleft}}
\newcommand\opt{\mathbin{\widetilde\op}}
\newcommand\PBi{B_\bullet}
\newcommand\PBisp{\PBi^{\mathsf{sp}}}
\newcommand\pdots{\mathrel{\HS{0.2}{\cdot}{\cdot}{\cdot}\HS{0.2}}}
\newcommand\perm{\mathsf{perm}}
\newcommand\pp{p}
\newcommand\pref{\mathbin{\scriptstyle\sqsubset}}
\newcommand\prefs{\mathbin{\scriptstyle\sqsubset^*}}
\newcommand\qq{q}
\newcommand\RD{\mathrm{RD}}
\newcommand\Rel[1]{\mathsf{Rel}_{#1}}
\newcommand\RR{R}
\newcommand\sh{\mathsf{sh}}
\newcommand\SH{\hbox{\scshape{sh}}}
\newcommand\shprod{\textstyle\prod\nolimits^{\mathsf{sh}}\HS{-0.5}}
\newcommand\SHt{\widetilde\SH}
\newcommand\shtr{\mathsf{shtr}}
\newcommand\Si{\mathfrak{S}_\infty}
\newcommand\sig[1]{\sigma_{#1}}
\newcommand\sigg[2]{\sigma_{#1}^{#2}}
\newcommand\siginv[1]{\sigma_{#1}^{-1}}
\newcommand\Sii{\mathfrak{S}_\infty^{\mathsf{sm}}}
\newcommand\Sisp{\mathfrak{S}_\infty^{\mathsf{sp}}}
\renewcommand\ss{s}
\renewcommand\SS{S}
\newcommand\Sym[1]{\mathfrak{S}_{\HS{-0.4}#1}}
\newcommand\TERM[2]{\mathsf{Term}_{#1}(#2)}
\newcommand\TT{T}
\newcommand\uu{u}
\def\VR(#1,#2){\vrule width0pt height#1mm depth#2mm}
\newcommand\vv{v}
\newcommand\wdots{, ..., }
\newcommand\ww{w}
\newcommand\xx{x}
\newcommand\XX{X}
\newcommand\yy{y}
\newcommand\zz{z}
\newcommand\ZZZZ{\mathbb{Z}}
\newcommand\ZZZZp{\mathbb{Z}_{>0}}
\author{Patrick DEHORNOY}
\address{Laboratoire de Math\'ematiques Nicolas Oresme UMR 6139\\ Universit\'e de Caen, 14032~Caen, France\\patrick.dehornoy@unicaen.fr\\{\textrm URL:} dehornoy.users.lmno.cnrs.fr}
\title{THE BRAID SHELF}
\keywords{selfdistributivity, shelf, braid, special braid, symmetric group, Laver table, charged braid, parenthesied braid}
\subjclass{20F36, 20N02, 57M25, 57M60}
\begin{document}

\begin{abstract}
The braids of~$\Bi$ can be equipped with a selfdistributive operation~$\op$ enjoying a number of deep properties. This text is a survey of known properties and open questions involving this structure, its quotients, and its extensions.
\end{abstract}

\maketitle




\section{Introduction}

According to an approach that can be traced back to Joyce~\cite{Joy}, Matveev~\cite{Mat}, and Brieskorn~\cite{Bri}, selfdistributivity (SD) is an algebraic distillation of Reidemeister's move of type~$\mathrm{III}$ and, therefore, it is not surprising that structures involving SD operations often appear in low-dimensional topology, typically when one wishes to attach isotopy-invariant colourings to the strands of a diagram. In this approach, which, for instance, leads to the fundamental quandle of a knot and to a number of nontrivial invariants~\cite{CKS, CJKLS, CarterSurvey}, SD structures are external tools outside the world of topological objects. However, there also exists a disjoint approach, in which the topological objects themselves are equipped with an SD operation: this happens for the braids of~$\Bi$, and for the elements of various related structures. As can be expected, combining the internal and external aspects of SD is what leads to interesting results. Our aim is to explore this a priori strange situation. 

Most of the results presented below already appeared in literature (with the exception of some in Section~\ref{S:Quot}), in particular in~\cite{Dhr}, where the connection between the SD operation on braids and the standard braid ordering is explained. However, a number of side results are disseminated in various papers and not easily accessible. Moreover, SD operations mainly occur as auxiliary tools, and there existed no comprehensive presentation specifically concentrating on the SD operations themselves. These notes aim at filling this gap. A particular orientation toward open questions has been given, appealing for further research.

The text is organized in four sections after this introduction. In Section~\ref{S:BraidOp}, we recall the existence of a selfdistributive operation~$\op$ on the family~$\Bi$ of braids involving an unlimited number of strands, and its basic properties. In Section~\ref{S:Spec}, we concentrate on special braids, which are those braids obtained from the unit braid using the SD operation~$\op$. Special braids form a free left-shelf, and give rise to canonical decompositions for arbitrary braids. Section~\ref{S:Quot} is devoted to a few observations about SD operations in quotients of braid groups, typically in permutation groups. Finally, we discuss in Section~\ref{S:Ext} three examples of SD operations living in extensions of the group~$\Bi$, and leading to further open questions.


\section{A selfdistributive operation on braids}\label{S:BraidOp}

The central object of interest in these notes is a certain binary operation~$\op$ defined on the braid group~$\Bi$. After recalling the standard terminology for selfdistributive structures, we introduce the operation~$\op$ in Subsection~\ref{SS:BraidOp}. In Subsection~\ref{SS:Thompson}, we briefly recall its origin as a projection from a certain geometry group of selfdistributivity. Next, we state a few typical algebraic properties of~$\op$ (Subsection~\ref{SS:Algebraic}), and we explain how it can be used to color the strands of braid diagrams (Subsection~\ref{SS:Action}).

\subsection{A self-distributive operation on~$\Bi$}
\label{SS:BraidOp}

The selfdistributivity law comes in two versions:\begin{gather}\label{LD}\tag{LD}
\text{\emph{left} selfdistributivity:}\qquad \xx \op (\yy \op \zz) = (\xx \op \yy) \op (\xx \op \zz),\\
\label{RD}\tag{RD}
\text{\emph{right} selfdistributivity:}\qquad (\xx \opR \yy) \opR \zz = (\xx \opR \zz) \opR (\yy \opR \zz).
\end{gather}
It is often more convenient in topology to use the right version (and, accordingly, the symbol~$\opR$), but, here, in order both to be coherent with the existing literature and because of some specific features that are not invariant under symmetry (see Remark~\ref{R:NotSym}), we shall use the left version, and the symbol~$\op$. We use the prefix ``left'' everywhere to avoid ambiguity.

\begin{defi}[\bf selfdistributive structures]
\ITEM1 A \emph{left-shelf} is a structure~$(\SS, \op)$, where $\op$ is a binary operation on~$\SS$ that obeys the law~$\LD$. 

\ITEM2 A \emph{left-spindle} is a left-shelf~$(\SS, \op)$,  in which $\xx \op \xx = \xx$ always holds.

\ITEM3 A \emph{left-rack} is a left-shelf~$(\SS, \op)$, in which left translations are bijective, that is, for every~$\aa$ in~$\SS$, the map $\LL_\aa: \yy \mapsto \aa \op \yy$ is a bijection from~$\SS$ to~itself.

\ITEM4 A \emph{left-quandle} is a left-rack that is also a left-spindle.
\end{defi}

Lots of examples are known. We refer to~\cite{Diz} for a general picture of the structures known so far, and, in particular, for a survey of results about (left) shelves that are not racks or spindles.

In a non-associative context, paying attention to brackets is necessary, and, for~$\aa$ in a set equipped with a binary operation~$\op$, we write $\aa^{[\mm]}$ and $\aa_{[\mm]}$ for the $\mm$th {\it right} and {\it left} powers of~$\aa$ inductively defined by
\begin{equation}\label{E:Powers}
\aa^{[1]} := \aa_{[1]} := \aa, \quad \aa^{[m+1]} := \aa \op \aa^{[m]}, \quad \aa_{[m+1]} := \aa_{[m]} \op \aa.
\end{equation}

Let us now turn to the braid operation. The braid group~$\BR\nn$ is the group of isotopy classes of $\nn$-strand braid diagrams, as well as the mapping class group of an $\nn$-punctured disk---see for instance~\cite{Bir} or~\cite{Dhr}. It admits the presentation
\begin{equation}\label{E:BraidPres}
\bigg\langle \sig1 \wdots \sig{\nn - 1} \ \bigg\vert\ 
\begin{matrix}
\sig\ii \sig j = \sig j \sig\ii 
&\text{for} &\vert i-j \vert\ge 2\\
\sig\ii \sig j \sig\ii = \sig j \sig\ii \sig j 
&\text{for} &\vert i-j \vert = 1
\end{matrix}
\ \bigg\rangle.
\end{equation}
For every~$\nn$, the inclusion of~$\{\sig1 \wdots \sig{\nn - 1}\}$ into~$\{\sig1 \wdots \sig\nn\}$ extends into an embedding~$\ii_{\nn, \nn + 1}$ of~$\BR\nn$ into~$\BR{\nn + 1}$. The group~$\Bi$ is the limit of the inductive system so obtained, hence simply the union of all~$\BR\nn$s when $\ii_{\nn, \nn + 1}$ is identified with identity. The group~$\Bi$ admits the presentation analogous to~\eqref{E:BraidPres} with an infinite sequence of generators~$\sig1, \sig2, ...$ 

From the presentation, it is clear that mapping~$\sig\ii$ to~$\sig{\ii + 1}$ for every~$\ii$ defines an endomorphism~$\sh$ of~$\Bi$, hereafter called the \emph{shift} endomorphism. Note that $\sh$ is not surjective: $\sig1$ does not lie in~$\Im(\sh)$.

\begin{defi}[\bf shifted conjugation]\label{D:BraidOp}
For~$\br_1, \br_2$ in~$\Bi$, we put
\begin{equation}\label{E:BraidOp}
\br_1 \op \br_2:= \br_1 \cdot \sh(\br_2) \cdot \sig1 \cdot \sh(\br_1)\inv.
\end{equation}
\end{defi}

The braid $\br_1 \op \br_2$ is a sort of conjugate of~$\br_2$ under~$\br_1$, but with shifts and an additional factor~$\sig1$ added, see Fig.~\ref{F:BraidOp}. The reader can check the values  
$$1 \op 1 = \sig1, \quad 1 \op \sig1 = \sig2\sig1, \quad \sig1 \op 1 = \sigg12\siginv2, \quad ...$$
and, more generally, $1^{[\mm]} = \sig{\mm - 1} {\pdots} \sig2\sig1$ for every~$\mm \ge 1$. The above equalities show that $\op$ is neither commutative nor idempotent. Note that, because of the shift operator in~\eqref{E:BraidOp}, the operation~$\op$ is defined on~$\Bi$ only, and it induces no well-defined operation on~$\BR\nn$ for any finite~$\nn$.

\begin{figure}[htb]
$$\begin{picture}(74,19)(0,0)
\psline[linewidth=2pt]{c->}(0,0)(43,0)(49,6)(74,6)
\psline[linewidth=2pt,border=4pt]{c->}(0,6)(43,6)(49,0)(74,0)
\psline[linewidth=2pt]{c->}(0,12)(74,12)
\psline[linewidth=2pt]{c->}(0,18)(74,18)
\psframe[fillstyle=solid,fillcolor=white,linecolor=white](5,-2)(20,20)\psline[linewidth=0.5pt](5,20)(5,-2)(20,-2)(20,20)
\psframe[fillstyle=solid,fillcolor=white,linecolor=white](25,4)(40,20)\psline[linewidth=0.5pt](25,20)(25,4)(40,4)(40,20)
\psframe[fillstyle=solid,fillcolor=white,linecolor=white](52,4)(67,20)\psline[linewidth=0.5pt](52,20)(52,4)(67,4)(67,20)
\put(11,8){$\br_1$}
\put(12,19){$\vdots$}
\put(31,11.5){$\br_2$}
\put(32,19){$\vdots$}
\put(57,11.5){$\br_1\inv$}
\put(59,19){$\vdots$}
\end{picture}$$
\caption{\sf Braid diagram for $\br_1 \op \br_2$: a sort of conjugation of~$\br_2$ under~$\br_1$, with shifts and one~$\sig1$ added.}
\label{F:BraidOp}
\end{figure}

Our interest in the operation~$\op$ on~$\Bi$ stems from

\begin{prop}[\bf braid shelf]
The operation of~\eqref{E:BraidOp} obeys the law~$\LD$, that is, $(\Bi, \op)$ is a left-shelf. It is neither a left-spindle, nor a left-rack.
\end{prop}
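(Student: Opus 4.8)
The plan is to reduce the law~$\LD$ to a purely group-theoretic statement and then to check the two hypotheses it needs. Concretely, I would first isolate the following abstract observation: given any group~$\GG$, any endomorphism~$s$ of~$\GG$, and any element~$\aa$ of~$\GG$, the operation $\xx \op \yy := \xx \cdot s(\yy) \cdot \aa \cdot s(\xx)\inv$ obeys~$\LD$ provided that (i)~$\aa$ commutes with every element of~$\Im(s^2)$, and (ii)~$\aa\, s(\aa)\, \aa = s(\aa)\, \aa\, s(\aa)$. The proof of this observation is a direct expansion of both members of $\xx \op (\yy \op \zz) = (\xx \op \yy) \op (\xx \op \zz)$, using repeatedly that $s$ is a homomorphism. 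After the obvious free cancellations, and after using~(i) to move the central occurrences of~$\aa$ past the $\Im(s^2)$-factors, both sides acquire one common prefix and one common suffix, and matching the remaining short words in~$\aa$ and~$s(\aa)$ is exactly what~(ii) provides.

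Then I would apply this with $\GG = \Bi$, $s = \sh$, and $\aa = \sig1$, so that $s(\aa) = \sh(\sig1) = \sig2$ and the operation becomes precisely that of~\eqref{E:BraidOp}. Condition~(i) holds because $\Im(\sh^2)$ is the subgroup generated by $\sig3, \sig4, \dots$, each of which commutes with~$\sig1$ by the far-commutation relations of~\eqref{E:BraidPres}; condition~(ii) is literally the braid relation $\sig1 \sig2 \sig1 = \sig2 \sig1 \sig2$. I expect the computation behind the abstract observation to be the only genuine obstacle: the bookkeeping of cancellations is long, and the delicate points are pinpointing the single place where conjugation by $s^2(\cdot)$-factors must be absorbed through~(i), and the single place where the braid relation~(ii) is invoked.

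Finally, the two negative assertions are quick. For the failure of the spindle law it suffices to produce one element with $\xx \op \xx \ne \xx$, and $1 \op 1 = \sig1 \ne 1$ already does this. For the failure of the rack law I would analyse the left translation $\LL_\aa \colon \yy \mapsto \aa \op \yy = \aa \cdot \sh(\yy) \cdot \sig1 \cdot \sh(\aa)\inv$. Since $\sh$ is injective, each $\LL_\aa$ is injective, so the obstruction must be surjectivity: as $\yy$ ranges over~$\Bi$, the image of~$\LL_\aa$ equals $\aa \cdot \Im(\sh) \cdot \sig1 \cdot \sh(\aa)\inv$, which is a left-and-right translate of the single right coset $\Im(\sh)\,\sig1$ of the subgroup~$\Im(\sh)$. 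Because $\sig1 \notin \Im(\sh)$, that coset is a proper subset of~$\Bi$, hence so is its translate; therefore $\LL_\aa$ is not onto and $(\Bi, \op)$ is not a left-rack.
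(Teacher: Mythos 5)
Your proposal is correct and follows essentially the same route as the paper: the abstract lemma you isolate is exactly the paper's direct expansion in disguise, your conditions (i) and (ii) being precisely the two facts the paper invokes, namely that $\sig1$ commutes with every element of~$\Im(\sh^2)$ and that $\sig1\sig2\sig1\siginv2 = \sig2\sig1$. The negative assertions also match, your coset description of the image of~$\LL_\aa$ being a mild rephrasing of the paper's observation that $1 \op \br = 1$ is impossible because $\sig1 \notin \Im(\sh)$.
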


\begin{proof}
A simple verification. Expanding the definition, we find for all $\br_1, \br_2, \br_3$
\begin{align*}
\br_1\op (\br_2 \op \br_3) 
&= \br_1\cdot \sh(\br_2) \cdot \sh^2(\br_3) \cdot \sig2 \cdot \sh^2(\br_2)\inv \cdot \sig1 \cdot \sh(\br_1)\inv,\\
(\br_1\op \br_2) \op (\br_1\op \br_3)
&= (\br_1\cdot \sh(\br_2) \cdot \sig1 \cdot \sh(\br_1)\inv) \op (\br_1\cdot \sh(\br_3) \cdot \sig1 \cdot \sh(\br_1)\inv)\\
&= (\br_1\cdot \sh(\br_2) \cdot \sig1 \cdot \sh(\br_1)\inv) \cdot \sh(\br_1\cdot \sh(\br_3) \cdot \sig1 \cdot \sh(\br_1)\inv)\\
&\hspace{3cm}\cdot \sig1 \cdot \sh(\br_1\cdot \sh(\br_2) \cdot \sig1 \cdot \sh(\br_1)\inv)\inv\\
&= \br_1\cdot \sh(\br_2) \cdot \sig1 \cdot \sh(\br_1)\inv \cdot \sh(\br_1)\cdot \sh^2(\br_3) \cdot \sig2 \cdot \sh^2(\br_1)\inv\\
&\hspace{3cm}\cdot \sig1 \cdot \sh^2(\br_1)\cdot \siginv2 \cdot \sh^2(\br_2)\inv \cdot \sh(\br_1)\inv\\
&= \br_1\cdot \sh(\br_2) \cdot \sig1 \cdot \sh^2(\br_3) \cdot \sig2 \cdot \sh^2(\br_1)\inv\\
&\hspace{3cm}\cdot \sig1 \cdot \sh^2(\br_1)\cdot \siginv2 \cdot \sh^2(\br_2)\inv \cdot \sh(\br_1)\inv.
\end{align*}
As $\sig1$ commutes with every braid in~$\Im(\sh^2)$ and $\sig1 \sig2 \sig1 \siginv2 = \sig2 \sig1$ holds, we find
$$\br_1\op (\br_2 \op \br_3) = (\br_1\op \br_2) \op (\br_1\op \br_3) = \br_1\cdot \sh(\br_2) \cdot \sh^2(\br_3) \cdot \sig2\sig1 \cdot \sh^2(\br_2)\inv \cdot \sh(\br_1)\inv,$$
and the law $\LD$ is satisfied.

We already noted the equality $1 \op 1 = \sig1$, which shows that $(\Bi, \op)$ is not a left spindle. Observe that, more generally, we \emph{always} have
\begin{equation}\label{E:NotIdem}
\br \not= \br \op \br,
\end{equation}
as an equality would expand into $1 = \sh(\br) \sig1 \sh(\br)\inv$, clearly imposible.

On the other hand, we have $1 \op \br = \sh(\br) \, \sig1$. As $\sig1$ does not belong to the image of~$\sh$, it is impossible to have $1 \op \br = 1$, so the left translation associated with~$1$ is not bijective, and $(\Bi, \op)$ is not a left-rack.
\end{proof}

\begin{rema}
Of course, we can obtain a right selfdistributive operation~$\opR$ by considering the opposite operation, namely
\begin{equation}\label{E:BraidOpR}
\br_1 \opR \br_2:= \sh(\br_2)\inv \cdot \sig1 \cdot \sh(\br_1) \cdot \br_2.
\end{equation}
\end{rema}

\subsection{Where does the braid shelf come from?}\label{SS:Thompson}

The braid operation~$\op$ was introduced in Def.~\ref{D:BraidOp} without any explanation, and Formula~\eqref{E:BraidOp} may look odd. We refer to~\cite{Dgb} for a complete explanation, but a few words here could be welcome. 

One can naturally associate with every algebraic law, or family of algebraic laws, a certain ``geometry monoid'' that, in some sense, captures the specific properties of the involved laws. When the law is the associativity law $x(yz) = (xy)z$, the geometry monoid is essentially Richard Thompson's group~$F$~\cite{CFP, Dhb} and, similarly, when both associativity and commutativity are considered, the geometry monoid is Thompson's group~$V$.

In the case of the selfdistributivity law~$\LD$, the geometry monoid turns out to be closely connected with a certain group~$\GLD$ generated by an infinite family of elements~$\LDop\alpha$ indexed by finite sequences of~$0$s and~$1$s and subject to an explicit list of relations~$\Rel\LD$. A basic property of~$\LD$ says that, if a left-shelf~$(\SS, \op)$ is generated by a single element~$\gg$, then, for every~$\aa$ in~$\SS$, the equality 
\begin{equation}\label{E:Abs}
\gg^{[\nn + 1]} = \aa \op \gg^{[\nn]}
\end{equation}
holds for~$\nn$ large enough: the result is trivial when~$\aa$ is~$\gg$ and the inductive argument
\begin{equation}\label{E:Abs1}
\gg^{[\nn + 1]} = \aa \op \gg^{[\nn]} = \aa \op (\bb \op \gg^{[\nn - 1]}) = (\aa \op \bb) \op (\aa \op \gg^{[\nn - 1]}) = (\aa \op \bb) \op \gg^{[\nn]}
\end{equation}
gives~\eqref{E:Abs} for~$\aa \op \bb$ starting from~\eqref{E:Abs} for~$\aa$ and for~$\bb$.

By construction, every formal consequence of the law~$\LD$ is encoded in an element of the group~$\GLD$. From there, the four equalities in~\eqref{E:Abs1} are encoded in a product of four elements in~$\GLD$, and they correspond to defining on~$\GLD$ a binary operation~$\opG$ by
\begin{equation}\label{E:GeomOp}
\ff \opG \gg:= \ff \cdot \sh_1(\gg) \cdot \LDop\emptyset \cdot \sh_1(\ff)\inv,
\end{equation}
where $\sh_1$ is the endomorphism that maps each element~$\LDop\alpha$ to~$\LDop{1\alpha}$ (appending an initial~$1$ in the finite sequence~$\alpha$). The operation~$\opG$ on~$\GLD$ is not selfdistributive, but, due to the connection of~$\opG$ with~\eqref{E:Abs}, the ``$\LD$-defect'' of~$\opG$, namely the quotient
\begin{equation*}
(\ff \opG (\gg \opG \hh))\inv \cdot ((\ff \opG \gg) \opG (\ff \opG \hh))
\end{equation*}
must lie in the image of the endomorphism~$\sh_0$ that maps~$\LDop\alpha$ to~$\LDop{0\alpha}$ for each~$\alpha$. As a consequence, when the subgroup~$\sh_0(\GLD)$ is collapsed, the operation~$\opG$ on~$\GLD$ must induce on the quotient an operation with no $\LD$-defect, that is, a selfdistributive operation.

It should not be a surprise to hear that the quotient-group $\GLD{/}\sh_0(\GLD)$ is (isomorphic to) the braid group~$\Bi$, and that the operation induced by~$\opG$ on~$\Bi$ is the operation~$\op$ of~\eqref{E:BraidOp}. More precisely, collapsing $\sh_0(\GLD)$ amounts to collapsing all elements~$\LDop\alpha$ such that $\alpha$ contains~$0$. All relations of~$\Rel\LD$ then become trivial, except the following ones:
\begin{gather*}
\LDop{1^\ii} \, \LDop{1^\jj} \, \LDop{1^\ii} = \LDop{1^\jj} \, \LDop{1^\ii} \,\LDop{1^\jj} \, \LDop{1^\ii0} \qquad\text{for $\jj = \ii + 1 \ge 1$}\\ 
\LDop{1^\ii} \, \LDop{1^\jj} = \LDop{1^\jj} \, \LDop{1^\ii} \qquad\text{for $\jj \ge \ii + 2 \ge 2$.}
\end{gather*}
Then collapsing $\LDop{1^\ii0}$ and mapping~$\LDop{1^\ii}$ to~$\sig{\ii+1}$ defines the expected epimorphism from~$\GLD$ onto~$\Bi$. Then \eqref{E:GeomOp} projects to~\eqref{E:BraidOp}, since $\sh_1$ projects to~$\sh$, and $\LDop\emptyset$ is mapped to~$\sig1$. Thus, the braid operation~$\op$ does not come out of the blue.

\subsection{Algebraic properties}\label{SS:Algebraic}

The left-shelf~$(\Bi, \op)$ is rather different from usual selfdistributive structures such as the various racks and quandles appearing in topology. Here we mention some of its algebraic properties, mainly those involving left translations, which are typical both in their statement and in their proof. 

We already noted that $(\Bi, \op)$ is not a left-rack, since the left translations $\LL_\br: \yy \mapsto \br \op \yy$ need not be bijections. More is known about such translations~\cite{Dgb}.

\begin{prop}[\bf left translations]\label{P:LeftTr}
\ITEM1 For every braid~$\br$, the left translation~$\LL_\br$ is injective, that is, $(\Bi, \op)$ is left cancellative.

\ITEM2 A braid~$\brr$ lies in the image of~$\LL_\br$ if, and only if, $\br \op \brr = \br^{[2]} \op \brr$ holds.
\end{prop}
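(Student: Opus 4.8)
The plan is to dispatch part~\ITEM1 directly, and to split part~\ITEM2 into an easy implication governed by~$\LD$ and a harder converse. For~\ITEM1, suppose $\br\op\yy = \br\op\yy'$; expanding~\eqref{E:BraidOp} and cancelling $\br$ on the left and $\sig1\cdot\sh(\br)\inv$ on the right in the group~$\Bi$ leaves $\sh(\yy) = \sh(\yy')$, whence $\yy = \yy'$ because $\sh$ is injective (it is the isomorphism of~$\Bi$ onto $\langle\sig2, \sig3, \dots\rangle$); thus $\LL_\br$ is injective. For the ``only if'' part of~\ITEM2, if $\brr = \br\op\yy$ lies in $\Im(\LL_\br)$, then, using $\br^{[2]} = \br\op\br$ and the law~$\LD$ on the triple $(\br, \br, \yy)$,
\[
\br^{[2]}\op\brr = (\br\op\br)\op(\br\op\yy) = \br\op(\br\op\yy) = \br\op\brr ,
\]
as wanted.

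The substance is the converse of~\ITEM2, which I would obtain by computing both sides explicitly and isolating the obstruction to solvability. Writing $u = \sh(\br)$, $v = \sh^2(\br)$, $w = \sh(\brr)$ and using $\br^{[2]} = \br\,u\,\sig1\,u\inv$, definition~\eqref{E:BraidOp} yields $\br\op\brr = \br\,w\,\sig1\,u\inv$ and $\br^{[2]}\op\brr = \br\,u\,\sig1\,u\inv\,w\,\sig1\,v\,\siginv2\,v\inv\,u\inv$. Equating these, cancelling $\br$ on the left and $u\inv$ on the right, and then using both that $\sig1$ commutes with every element of $\Im(\sh^2)$ (in particular with~$v$) and the identity $\sig1\sig2\sig1\siginv2 = \sig2\sig1$ already exploited in the proof that $(\Bi, \op)$ is a left-shelf, the equation collapses to the single requirement that $T$ commute with~$\sig1$, where $T := u\inv\,w\,v\,\siginv2 = \sh(\br\inv\,\brr\,\sh(\br)\,\siginv1)$. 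As every step here is a reversible group operation, $\br\op\brr = \br^{[2]}\op\brr$ holds if and only if $T$ centralises~$\sig1$, and $T$ lies in $\Im(\sh)$ by construction. Since on the other hand $\brr = \br\op\yy$ is solvable for~$\yy$ exactly when $\br\inv\,\brr\,\sh(\br)\,\siginv1 \in \Im(\sh)$, that is, when $T \in \Im(\sh^2)$, the whole of~\ITEM2 reduces to the group-theoretic identity $C(\sig1)\cap\Im(\sh) = \Im(\sh^2)$, where $C(\sig1)$ denotes the centraliser of~$\sig1$ in~$\Bi$.

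The inclusion $\Im(\sh^2)\subseteq C(\sig1)\cap\Im(\sh)$ is immediate from $\sig1\sig\ii = \sig\ii\sig1$ for $\ii\ge3$ (and merely re-proves the ``only if'' direction), so the main obstacle is the reverse inclusion. The plan is to invoke the faithful Artin representation of~$\Bi$ on the free group $\langle x_1, x_2, \dots\rangle$, under which each $\sig\ii$ with $\ii\ge2$ fixes~$x_1$ while each element of $\Im(\sh^2)$ fixes both $x_1$ and $x_2$. For $T\in\Im(\sh)$ commuting with~$\sig1$: it already fixes~$x_1$, and evaluating $T\,\sig1 = \sig1\,T$ at~$x_2$---on which $\sig1$ acts by $x_2\mapsto x_1$---forces, by freeness, $T(x_2) = x_2$. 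It then remains to show that a braid of $\Im(\sh)$ fixing~$x_2$ lies in $\Im(\sh^2)$; through the shift isomorphism this is precisely the assertion that the stabiliser of~$x_1$ under the Artin action is exactly $\Im(\sh)$. I expect this stabiliser computation to be the genuinely delicate ingredient, and I would derive it from Artin's description of the automorphisms coming from braids, combined with an induction that eliminates $\sig1$ from any word representing such a braid.
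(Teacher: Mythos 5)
Your part~(1), the easy direction of~(2), and the algebraic reduction of the hard direction are all correct and coincide with what the paper does: both arguments boil the converse down to the single group-theoretic fact that an element of~$\Im(\sh)$ commuting with~$\sig1$ must lie in~$\Im(\sh^2)$ --- this is exactly~\eqref{E:Trick}, applied to $T = \sh(\br\inv\,\brr\,\sh(\br)\,\siginv1)$. The gap is that you do not prove this fact. Your Artin-representation step correctly extracts $\rho(T)(x_2)=x_2$ from the commutation, but everything then rests on the assertion that the stabiliser of~$x_1$ under the Artin action is exactly~$\Im(\sh)$, which you defer to ``Artin's description \dots{} combined with an induction that eliminates~$\sig1$.'' That assertion is true, but it does not follow from Artin's characterisation of braid automorphisms in any routine way: it amounts to saying that a braid with no $\sig1$-free representative cannot fix~$x_1$, i.e.\ essentially to Larue's lemma \emph{together with} the trichotomy of Prop.~\ref{P:Order} (every braid is $\sig1$-positive, $\sig1$-negative, or $\sig1$-free), a deep result whose proof in the paper goes through the whole special-decomposition machinery; and an ``induction that eliminates $\sig1$ from a word'' is precisely handle reduction, whose termination is itself a hard theorem (a naive length induction fails because removing occurrences of~$\sig1$ can lengthen the word). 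As written, the converse of~(2) is therefore not established.

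The paper sidesteps all of this with the \emph{handle trick} (Fig.~\ref{F:Trick}): for $\br$ in~$\BR\nn$ one has the diagrammatic identity
$$\siginv1\, \sh(\br)\, \sig1 = \sig2 \pdots \sig\nn \cdot \br \cdot \siginv\nn \pdots \siginv2,$$
so if $\sh(\br)$ commutes with~$\sig1$, then $\br = \siginv\nn \pdots \siginv2 \cdot \sh(\br) \cdot \sig2 \pdots \sig\nn$, which lies in~$\Im(\sh)$ explicitly since each factor does. This two-line computation is exactly the ingredient your proposal is missing; note that it exploits the fact that the element in question is given in the form $\sh(\br)$ with $\br$ supported on finitely many strands, rather than attacking the centraliser $C(\sig1)\cap\Im(\sh)$ abstractly through the Artin action. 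With this substituted for your final step, your argument closes and is essentially the paper's proof.
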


\begin{prproof}
\ITEM1 Expanding $\br \op \brr =  \br \op \brr' $ gives 
$$\br \cdot \sh(\brr) \cdot \sig1 \cdot \sh(\br)\inv = \br \cdot \sh(\brr')  \cdot \sig1 \cdot \sh(\br)\inv,$$
whence $\sh(\brr) = \sh(\brr') $ as cancellation is legal in the group~$\Bi$. This implies $\brr = \brr' $, as $\sh$ is injective. We deduce that $(\Bi, \op)$ is left cancellative.

\ITEM2 Assume $\brr = \br \op \br'$. The selfdistributivity law implies
$$\br \op \brr = \br \op (\br \op \br') = (\br \op \br) \op (\br \op \br') = \br^{[2]} \op \brr,$$ 
so the condition is necessary. The converse is more tricky. We begin with a general auxiliary result, namely the fact that, for every~$\br$ in~$\Bi$, 
\begin{equation}\label{E:Trick}
\text{$\br$ belongs to~$\sh(\Bi)$ \qquad  if, and only if, \qquad $\sh(\br)$ and $\sig1$ commute.}
\end{equation}
Indeed, if $\br$ belongs to the image of~$\sh$, then $\sh(\br)$ belongs to the image of~$\sh^2$, hence it commutes with~$\sig1$ in the group~$\Bi$. Conversely, for every~$\br$ in~$\BR\nn$, the handle trick of Fig.~\ref{F:Trick} gives
$$\sh(\br)\inv \,\siginv1\, \sh(\br) \,\sig1 = \sh(\br)\inv \, \sig2 \pdots \sig\nn\, \br
\,\siginv\nn \pdots \siginv2.$$
So, if $\sh(b)$ and $\sig1$ commute, we obtain $\br = \siginv\nn \dots \siginv 2\, \sh(\br) \, \sig 2 \pdots \sig\nn$, which belongs to~$\sh(\Bi)$ explicitly.

Now assume $\br \op \brr = \br^{[2]} \op \brr$. Expanding the expressions gives
$$\br \, \sh(\brr) \, \sig1 \, \sh(\br)\inv = \br \, \sh(\br) \sig1 \, \sh(\br)\inv \, \sh(\brr) \, \sig1 \, \sh^2(\br) \,  \siginv2 \, \sh^2(\br)\inv \, \sh(\br)\inv,$$
which can be rewritten as $\sh(\br\inv \brr \sh(\br)) \sig1 = \sig1 \sh(\br\inv \brr \sh(\br)) \sig1\siginv2$, whence, using the braid relations, into
$$\sh(\br\inv \, \brr \, \sh(\br) \, \siginv1) \cdot \sig1 = \sig1 \cdot \sh(\br\inv \, \brr \, \sh(\br) \, \siginv1),$$
which expresses that $\sh(\br\inv \, \brr \, \sh(\br)  \, \siginv1)$ and~$\sig1$ commute. By~\eqref{E:Trick}, this implies that $\br\inv \, \brr \, \sh(\br) \, \siginv1$ belongs to~$\sh(\Bi)$, hence there exists~$\br'$ satisfying \linebreak $\br\inv \, \brr \, \sh(\br) \, \siginv1 = \sh(\br')$. The latter equality is $\brr = \br \op \br'$. Hence, $\brr$ lies in the image of the left translation~$\LL_\br$.
\end{prproof}

\begin{figure}[htb]
$$\begin{picture}(42,18)(0,0)
\psline[linewidth=2pt,border=4pt]{c->}(0,6)(2,6)(8,0)(32,0)(38,6)(42,6)
\psline[linewidth=2pt,border=4pt]{c->}(0,0)(2,0)(8,6)(32,6)(38,0)(42,0)
\psline[linewidth=2pt]{c->}(0,12)(42,12)
\psline[linewidth=2pt]{c->}(0,18)(42,18)
\psframe[fillstyle=solid,fillcolor=white,linewidth=0.5pt](10,4)(30,20)
\put(19,11){$\br$}
\put(48.5,8){$\sim$}
\end{picture}
\hspace{20mm}
\begin{picture}(42,18)(0,0)
\psline[linewidth=2pt](0,0)(42,0)
\psline[linewidth=2pt,border=4pt]{c->}(0,6)(2,6)(10,18)(30,18)(38,6)(42,6)
\psline[linewidth=2pt,border=4pt]{c->}(0,12)(2,12)(8,6)(32,6)(38,12)(42,12)
\psline[linewidth=2pt,border=4pt]{c->}(0,18)(2,18)(8,12)(32,12)(38,18)(42,18)
\psframe[fillstyle=solid,fillcolor=white,linewidth=0.5pt](10,-2)(30,14)
\put(19,5){$\br$}
\end{picture}$$
\caption{\sf The handle trick.}
\label{F:Trick}
\end{figure}

Again about left translations, let us mention another result, which is reminiscent of~\eqref{E:Abs}, but is quite different in that $(\Bi, \op)$ is not monogenerated.

\begin{prop}[\bf absorption]
A braid~$\br$ of~$\Bi$ belongs to~$\BR\nn$ if, and only if, the equality $\br \op 1^{[\nn]} = 1^{[\nn + 1]}$ is satisfied.
\end{prop}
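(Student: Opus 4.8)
The plan is to convert the shelf identity into a single conjugacy relation in the group~$\Bi$ and then analyze that relation directly. First I would expand the left-hand side using~\eqref{E:BraidOp} together with $1^{[\nn]} = \sig{\nn-1}\pdots\sig2\sig1$, so that $\sh(1^{[\nn]})\cdot\sig1 = \sig\nn\pdots\sig2\sig1 = 1^{[\nn+1]}$ and hence
$$\br \op 1^{[\nn]} = \br \cdot \sh(1^{[\nn]}) \cdot \sig1 \cdot \sh(\br)\inv = \br \cdot 1^{[\nn+1]} \cdot \sh(\br)\inv.$$
Writing $\delta_\nn := 1^{[\nn+1]} = \sig\nn\pdots\sig2\sig1$ and cancelling in~$\Bi$, the stated equality $\br \op 1^{[\nn]} = 1^{[\nn+1]}$ becomes equivalent to the single relation
$$\sh(\br) = \delta_\nn\inv \cdot \br \cdot \delta_\nn.$$
Everything then reduces to identifying the braids~$\br$ whose shift is obtained by conjugating~$\br$ with~$\delta_\nn$.

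For the forward implication I would show that $\delta_\nn$ realizes the shift by conjugation on~$\BR\nn$. The key computation is that, for $1 \le \ii \le \nn-1$, pushing $\sig\ii$ through~$\delta_\nn$ by the braid relations yields $\sig\ii\,\delta_\nn = \delta_\nn\,\sig{\ii+1}$: first $\sig\ii$ commutes past $\sig\nn\pdots\sig{\ii+2}$, then $\sig\ii\sig{\ii+1}\sig\ii = \sig{\ii+1}\sig\ii\sig{\ii+1}$ frees a surplus $\sig{\ii+1}$ which commutes past the tail $\sig{\ii-1}\pdots\sig1$ to the far right. Thus $\delta_\nn\inv\sig\ii\delta_\nn = \sig{\ii+1} = \sh(\sig\ii)$. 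Since $\delta_\nn\inv(\cdot)\delta_\nn$ and $\sh$ are homomorphisms agreeing on the generators $\sig1\wdots\sig{\nn-1}$ of~$\BR\nn$, they agree on all of~$\BR\nn$, so every $\br$ in~$\BR\nn$ satisfies the relation, hence the equality.

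For the converse I would argue by the number of strands actually used. Let $\mm$ be least with $\br \in \BR\mm$, and suppose for contradiction that $\mm \ge \nn+1$. Then $\delta_\nn$ lies in $\BR{\nn+1} \subseteq \BR\mm$, so the right-hand side $\delta_\nn\inv\br\delta_\nn$ stays inside~$\BR\mm$; by the relation, $\sh(\br) \in \BR\mm$. The crux is that the shift cannot lower the strand count in this way: $\sh(\br) \in \BR\mm$ forces $\br \in \BR{\mm-1}$, contradicting the minimality of~$\mm$. Hence $\mm \le \nn$, that is $\br \in \BR\nn$.

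The main obstacle is exactly this last input, namely $\sh(\Bi)\cap\BR\mm = \sh(\BR{\mm-1})$, equivalently: $\sh(\br)\in\BR\mm$ implies $\br\in\BR{\mm-1}$. This is the braid-group instance of the intersection property of standard parabolic subgroups, $\langle\sig2,\sig3,\ldots\rangle\cap\langle\sig1\wdots\sig{\mm-1}\rangle = \langle\sig2\wdots\sig{\mm-1}\rangle$, and I would either invoke this known property or derive the single inclusion needed from a normal form for braid words. I would stress that the underlying-permutation homomorphism alone is not enough: it only shows that~$\br$ moves no point beyond~$\nn$, which does not pin down the strand count (for instance pure braids may involve high-index generators while fixing all punctures), so a genuinely braid-level fact such as the parabolic intersection seems unavoidable here.
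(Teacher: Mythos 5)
Your argument is correct. The forward direction is essentially the paper's own: your relation $\sig\ii\,\delta_\nn = \delta_\nn\,\sig{\ii+1}$ for $\ii<\nn$ is precisely the algebraic form of the ``handle trick'' by which the paper shows that conjugation by~$1^{[\nn+1]}$ realizes the shift on~$\BR\nn$. The converse is where you genuinely diverge. The paper takes $\mm>\nn$ minimal with $\br\in\BR\mm$, cites~\cite{Dfo} to get an expression of~$\br$ containing exactly one letter~$\sigg{\mm-1}{\pm1}$, deduces the same (with~$\sigg{\mm}{\pm1}$) for the defect braid $\br\cdot 1^{[\nn+1]}\cdot\sh(\br)\inv\cdot(1^{[\nn+1]})\inv$, and concludes by~\cite{Dfb} that this braid is nontrivial. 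You instead recast the hypothesis as $\sh(\br)=\delta_\nn\inv\br\,\delta_\nn$, observe that the right-hand side lies in~$\BR\mm$, and invoke the parabolic intersection property $\sh(\Bi)\cap\BR\mm=\sh(\BR{\mm-1})$ to contradict minimality of~$\mm$. Both converses rest on a nontrivial black box. Yours cleanly isolates a purely group-theoretic fact (true for standard parabolic subgroups of braid groups, provable by several routes), while the paper's stays entirely inside the $\sigma$-positivity machinery it relies on elsewhere; note that your black box is itself a quick consequence of that same machinery: if $\sh(\br)\in\BR\mm$ but $\br\notin\BR{\mm-1}$, the citation of~\cite{Dfo} gives an expression of~$\sh(\br)$ with exactly one letter~$\sigg{\mm'}{\pm1}$ for some $\mm'\ge\mm$, contradicting $\sh(\br)\in\BR\mm$ via~\cite{Dfb}. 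Your closing remark that the permutation projection cannot suffice is apt and correctly identifies why some genuinely braid-level input is unavoidable here.
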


\begin{proof}
By definition, we find in every case
$$\br \op 1^{[\nn]} = \br \cdot \sig\nn \pdots \sig2 \cdot \sig1 \cdot \sh(\br)\inv = \br \cdot 1^{[\nn + 1]} \cdot \sh(\br)\inv.$$
Assume $\br \in \BR\nn$. Then $\br$ can be expressed as a product of generators~$\sigg\ii{\pm 1}$ with $\ii < \nn$. Hence, by the handle trick of Fig.~\ref{F:Trick}, $\br\cdot \sig\nn \pdots \sig1$ is equal to~$\sig\nn \pdots \sig1\cdot \sh(\br)$, implying $\br \op 1^{[\nn]} = 1^{[\nn + 1]}$. 

Conversely, assume $\br \notin \BR\nn$. Let $\mm > \nn$ be minimal such that $\br$ belongs to~$\BR\mm$. By~\cite{Dfo}, we know that $\br$ admits an expression where exactly one of $\sig{\mm - 1}$, $\siginv{\mm - 1}$ occurs. It follows that $\sh(\br)$ has an expression where exactly one of $\sig\mm$, $\siginv\mm$ occurs, and the same holds for $\br \cdot 1^{[\nn + 1]} \cdot \sh(\br)\inv \cdot (1^{[\nn + 1]})\inv$, since $\br$, $1^{[\nn + 1]}$, and $(1^{[\nn + 1]})\inv$ can be expressed without~$\sigg{\mm}{\pm1}$. Hence, by~\cite{Dfb}, the latter braid cannot be the unit braid, that is, $\br \op 1^{[\nn]} = 1^{[\nn + 1]}$ fails.
\end{proof}

The case of right translations~$\RR_\br: \xx \mapsto \xx \op \br$ is different, since they need not be injective, but, again, the image can be characterized, this time using conjugacy instead of equality, see~\cite{Dgb}. 

\begin{rema}
Because of the similarity with conjugacy, the operation~$\op$ gives rise to potentially difficult algorithmic problems and, therefore, it might be used in cryptographic protocols~\cite{Dhk, KLT, LoU}. This remains marginal so far.
\end{rema}

\subsection{The partial action of braids on sequences of braids}\label{SS:Action}

If $(\SS, \op)$ is a left-rack and we use~$\opb$ for the inverse operation so that $\aa \op \bb = \cc$ is equivalent to $\aa \opb \cc = \bb$, it is well known that the formulas
\begin{gather}\label{E:Action1}
(\aa_1\wdots  \aa_\nn) \act \sig\ii = (\aa_1, \pdots , \aa_{\ii-1}, \aa_\ii \op \aa_{\ii+1}, \aa_\ii, \aa_{\ii+2}\wdots  \aa_\nn)\\
\label{E:Action2}
(\aa_1\wdots  \aa_\nn) \act \siginv\ii = (\aa_1\wdots  \aa_{\ii-1}, \aa_{\ii + 1}, \aa_{\ii+1}\opb \aa_\ii, \aa_{\ii+2}\wdots  \aa_\nn)
\end{gather}
define an action of~$\BR\nn$ on~$\SS^\nn$. This action can be described in terms of strand colorings: for $\br$ an $\nn$-strand braid and $\vec\aa$ a sequence in~$\SS^\nn$, the value of $\vec\aa \act \br$ is the sequence of output colors obtained when the input colors~$\vec\aa$ are attributed to the initial ends of the strands of~$\br$ and the colors are propagated according to the rules
$$\begin{picture}(16,12)(0,0)
\psline[linewidth=2pt,border=3pt]{c->}(2,0)(4,0)(12,8)(16,8)
\psline[linewidth=2pt,border=3pt]{c->}(2,8)(4,8)(12,0)(16,0)
\put(2,1){$\aa$}
\put(2,9){$\bb$}
\put(12,9.5){$\aa$}
\put(12,1.5){$\aa\op\bb$}
\end{picture}
\hspace{20mm}
\begin{picture}(16,12)(0,0)
\psline[linewidth=2pt,border=3pt]{c->}(2,8)(4,8)(12,0)(16,0)
\psline[linewidth=2pt,border=3pt]{c->}(2,0)(4,0)(12,8)(16,8)
\put(2,1){$\bb$}
\put(2,9){$\aa$}
\put(12,9.5){$\aa\opb\bb$}
\put(12,1.5){$\aa$}
\end{picture}$$
When $(\SS, \op)$ is only a left-shelf, \eqref{E:Action2} need not make sense. Restricting to positive braids (those that can be expressed without any letter~$\siginv\ii$), and using~$\BP\nn$ for the monoid of $\nn$-strand positive braids, we obtain, see Fig.~\ref{F:Colouring}:

\begin{lemm}\label{L:ActionPos}
If $(\SS, \op)$ is a left-shelf, \eqref{E:Action1} defines an action of~$\BP\nn$ on~$\SS^\nn$.
\end{lemm}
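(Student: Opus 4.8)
The plan is to exploit the monoid presentation of $\BP\nn$. Recall that $\BP\nn$ is the monoid presented by the generators $\sig1 \wdots \sig{\nn-1}$ together with the relations of~\eqref{E:BraidPres} read as monoid relations, so that two positive braid words represent the same element of~$\BP\nn$ if, and only if, they are connected by a finite chain of applications of those relations to subwords. For each $\ii < \nn$, let $\tau_\ii\colon \SS^\nn \to \SS^\nn$ be the self-map sending $(\aa_1 \wdots \aa_\nn)$ to the right-hand side of~\eqref{E:Action1}; this map is well defined for any left-shelf since $\op$ is a total operation, and no inverse operation~$\opb$ is needed---this is exactly why we restrict to positive braids and to~\eqref{E:Action1}, discarding~\eqref{E:Action2}. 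To prove that $\vec\aa \act \br$ is a well-defined action, it then suffices to check that the maps $\tau_\ii$ respect the two families of defining relations; and because both relations are palindromes, the order of composition forced by the right action is immaterial, so the left/right distinction causes no trouble.

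The far-commutation relation is immediate: for $|\ii - \jj| \ge 2$ the maps $\tau_\ii$ and $\tau_\jj$ alter disjoint pairs of coordinates, so they commute. For the braid relation it suffices to treat $\jj = \ii + 1$, and, since $\tau_\ii$ and $\tau_{\ii+1}$ fix every coordinate outside $\{\ii, \ii+1, \ii+2\}$, we reduce to the case $\nn = 3$, $\ii = 1$ and must show that $\tau_1 \comp \tau_2 \comp \tau_1$ and $\tau_2 \comp \tau_1 \comp \tau_2$ agree on~$\SS^3$. A direct computation starting from $(\aa, \bb, \cc)$ gives
$$(\aa, \bb, \cc) \act \sig1\sig2\sig1 = \bigl((\aa\op\bb)\op(\aa\op\cc),\ \aa\op\bb,\ \aa\bigr), \qquad (\aa, \bb, \cc) \act \sig2\sig1\sig2 = \bigl(\aa\op(\bb\op\cc),\ \aa\op\bb,\ \aa\bigr).$$
The last two coordinates coincide, and the two first coordinates are equal precisely by the law~\eqref{LD}, which reads $\aa \op (\bb \op \cc) = (\aa \op \bb) \op (\aa \op \cc)$. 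Hence the braid relation is satisfied and the maps $\tau_\ii$ induce the desired action of~$\BP\nn$ on~$\SS^\nn$.

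I expect no genuine obstacle beyond this bookkeeping: the sole nontrivial verification is the braid relation, and it is nothing but an incarnation of left selfdistributivity, in keeping with the ``distillation of Reidemeister~$\mathrm{III}$'' mentioned in the introduction. The only points requiring a little care are the reduction to three strands and the composition order of the right action; once these are set up, the computation mechanically reproduces the two sides of~\eqref{LD}.
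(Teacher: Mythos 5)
Your proof is correct and is essentially the argument the paper intends: the paper proves this lemma only implicitly, via Fig.~\ref{F:Colouring}, whose two diagrams encode exactly your computation of $(\aa,\bb,\cc)\act\sig1\sig2\sig1$ versus $(\aa,\bb,\cc)\act\sig2\sig1\sig2$ and the observation that their agreement is precisely the law~\eqref{LD}. Your reduction to the defining relations of the positive braid monoid and the trivial far-commutation check are the standard bookkeeping the paper leaves unstated, so there is nothing to add.
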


\begin{figure}[htb]
$$\begin{picture}(52,18)(0,0)
\psline[linewidth=2pt,border=3pt]{c->}(0,0)(18,0)(26,8)(32,8)(40,16)(48,16)
\psline[linewidth=2pt,border=3pt]{c->}(0,8)(4,8)(12,16)(32,16)(40,8)(48,8)
\psline[linewidth=2pt,border=3pt]{c->}(0,16)(4,16)(12,8)(18,8)(26,0)(48,0)
\put(0,1){$\aa$}
\put(0,9){$\bb$}
\put(0,17){$\cc$}
\put(12,9){$\bb\op\cc$}
\put(14,17){$\bb$}
\put(28,9){$\aa$}
\put(40,1.5){$\aa\op(\bb\op\cc)$}
\put(40,9.5){$\aa\op\bb$}
\put(42,17.5){$\aa$}
\end{picture}
\hspace{7mm}
\begin{picture}(58,18)(0,0)
\psline[linewidth=2pt,border=3pt]{c->}(0,0)(4,0)(12,8)(18,8)(26,16)(48,16)
\psline[linewidth=2pt,border=3pt]{c->}(0,8)(4,8)(12,0)(32,0)(40,8)(48,8)
\psline[linewidth=2pt,border=3pt]{c->}(0,16)(18,16)(26,8)(32,8)(40,0)(48,0)
\put(0,1){$\aa$}
\put(0,9){$\bb$}
\put(0,17){$\cc$}
\put(12,1){$\aa\op\bb$}
\put(14,9){$\aa$}
\put(28,17){$\aa$}
\put(26,9){$\aa \op\cc$}
\put(40,1.5){$(\aa\op\bb)\op(\aa\op\cc)$}
\put(40,9.5){$\aa\op\bb$}
\put(42,17.5){$\aa$}
\end{picture}$$
\caption{\sf Standard translation of Reidemeister~$\mathrm{III}$ move into the language of selfdistributivity: when colours from a set~$\SS$ are put on the left ends of the strands and then propagated so that a $\bb$-coloured strand becomes $\aa \op \bb$-coloured when it overcrosses an $\aa$-coloured arc, then the output colours are invariant under Reidemeister~$\mathrm{III}$ move if, and only if, $\op$ obeys the law~$\LD$.}
\label{F:Colouring}
\end{figure}

However, whenever $(\SS, \op)$ is a left cancellative left-shelf, we can extend the action of~$\BP\nn$ into a \emph{partial} action of the braid group~$\BR\nn$ on~$\SS^\nn$: using~$\opb$ for the \emph{partial} operation on~$\SS$ such that $\aa \op \bb = \cc$ is equivalent to $\aa \opb \cc = \bb$ \emph{when such an element~$\bb$ exists}, \eqref{E:Action2} can still be used. The technical result that makes this partial action possibly useful is the following one, whose proof is nontrivial (see~\cite[Sec.~3.1]{Dgb}):

\begin{lemm}[\bf partial action]\label{L:Action}
If $(\SS, \op)$ is a left cancellative left-shelf, then \eqref{E:Action1}--\eqref{E:Action2} induce a partial action of braid words such that:

\ITEM1 For every finite family of braid words~$\ww \wdots \ww_\pp$, there exists a sequence~$\vec\aa$ in~$\SS^\nn$ such that $\vec\aa \act \ww_\ii$ is defined for every~$\ii$;

\ITEM2 If $\ww$ and~$\ww'$ represent the same braid of~$\Bi$ and both $\vec\aa \act \ww$ and $\vec\aa \act \ww'$ are defined, then they are equal.
\end{lemm}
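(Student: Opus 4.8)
The plan is to build the partial action in stages: first pin down the single-letter operations, then establish the coherence statement~\ITEM2, and finally produce the common domain element of~\ITEM1. Left cancellativity (Proposition~\ref{P:LeftTr}\ITEM1) first guarantees that $\opb$ is a genuine \emph{partial} operation, since for given~$\aa,\cc$ there is at most one~$\bb$ with $\aa\op\bb=\cc$; reading a braid word letter by letter and applying \eqref{E:Action1} for each~$\sig\ii$ and \eqref{E:Action2} for each~$\siginv\ii$ then defines the partial map $\vec\aa\mapsto\vec\aa\act\ww$, with concatenation of words corresponding to composition of partial maps. Two elementary facts, both immediate from left cancellativity, will carry the argument: the map $\vec\aa\mapsto\vec\aa\act\sig\ii$ is \emph{injective} (from the output one reads off~$\aa_\ii$, then recovers~$\aa_{\ii+1}$ by cancelling), hence $\act\delta$ is injective for every positive word~$\delta$; and $\act\sig\ii$ and $\act\siginv\ii$ are mutually inverse partial maps, so that $\act(\sig\ii\siginv\ii)$ and $\act(\siginv\ii\sig\ii)$ are restrictions of the identity.

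For~\ITEM2 I would reduce to invariance under the defining relations of~$\BR\nn$. For a commutation relation $\vert i-j\vert\ge2$ the two letters act on disjoint positions, so $\act\sig\ii$ and $\act\sig\jj$ literally commute as partial maps. For a braid relation $\sig\ii\sig\jj\sig\ii=\sig\jj\sig\ii\sig\jj$ with $\vert i-j\vert=1$, both sides are positive, hence everywhere defined, and Lemma~\ref{L:ActionPos}—that is, the law~$\LD$ read off Figure~\ref{F:Colouring}—gives $\act(\sig\ii\sig\jj\sig\ii)=\act(\sig\jj\sig\ii\sig\jj)$ as maps; this is the only place where selfdistributivity enters. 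Thus each \emph{braid} relation preserves the partial map on the nose, while each \emph{free} cancellation only restricts its domain without changing values. The \textbf{main obstacle} is that chaining these local facts is not automatic: a sequence of moves joining~$\ww$ to~$\ww'$ runs through words whose action at the given~$\vec\aa$ need not be defined, and a naive zig-zag of domain restrictions does not by itself force two partial maps to agree on the intersection of their domains.

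I expect this to require the device of~\cite[Sec.~3.1]{Dgb}. My preferred route removes the negative letters by a central positive multiple: since $\Delta^2$ is central and positive, for large~$k$ the braid $\br\Delta^{2k}$ is positive, where~$\br$ is the common value of~$\ww$ and~$\ww'$; writing a right fraction $\br=\brr\,\delta\inv$ with $\brr,\delta$ positive, the words $\ww\delta$ and $\ww'\delta$ both represent the positive braid~$\brr$. By Lemma~\ref{L:ActionPos} the action of~$\BP\nn$ is well defined, so $\vec\aa\act(\ww\delta)=\vec\aa\act(\ww'\delta)$ once each is identified with $\vec\aa\act P$ for a positive word~$P$ representing~$\brr$; since $\vec\aa\act(\ww\delta)=(\vec\aa\act\ww)\act\delta$ and $\act\delta$ is injective, cancelling~$\delta$ yields $\vec\aa\act\ww=\vec\aa\act\ww'$. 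The gap to be filled is exactly that identification, namely comparing the mixed word $\ww\delta$ with a positive word: one must connect them by moves that never create an undefined division. I would secure this through handle reduction, whose elementary steps are \emph{reducing} and therefore, by the facts of the first paragraph, preserve both definedness and output, so that $\ww\delta$ and~$P$ reduce to a common word.

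For~\ITEM1 the problem is purely to feed each negative crossing of each~$\ww_\ii$ a left-divisible colour. I would work inside the free left-shelf, which is known to be left cancellative and in which the absorption property~\eqref{E:Abs} is available, and then push the resulting sequence forward along a shelf homomorphism into~$\SS$; because~$\SS$ is left cancellative, definedness over the free shelf transfers to~$\SS$. The construction uses iterated powers of a single generator~$\gg$: by~\eqref{E:Abs}, $\gg^{[m+1]}=\aa\op\gg^{[m]}$ holds for every~$\aa$ once~$m$ is large, so a high power~$\gg^{[m]}$ is left-divisible by essentially anything, and a starting sequence $(\gg^{[N_1]}\wdots\gg^{[N_\nn]})$ with a sufficiently large, sufficiently fast-varying exponent profile makes every division demanded along the computation available. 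For the finite family $\ww_1\wdots\ww_\pp$ one takes~$\nn$ to bound the strand numbers and chooses the profile to dominate all the words simultaneously. The nontrivial point, again drawn from~\cite[Sec.~3.1]{Dgb}, is the bookkeeping showing that the exponents stay large enough relative to one another as successive crossings are processed, so that no division ever fails.
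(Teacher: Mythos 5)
The paper itself gives no proof of this lemma: it is used as a black box, with an explicit warning that the proof is nontrivial and a pointer to~\cite[Sec.~3.1]{Dgb}. So your attempt can only be judged on its own merits. Your preliminary facts are correct and well chosen: left cancellativity makes~$\opb$ a single-valued partial operation, $\act\sig\ii$ and~$\act\siginv\ii$ are mutually inverse injective partial maps, positive braid relations preserve the (everywhere defined) action by Lemma~\ref{L:ActionPos}, and deletions of $\sigg\ii{\pm1}\sigg\ii{\mp1}$ only restrict domains. You also correctly locate the obstacle (intermediate words of a derivation may leave the domain), and your reduction of~\ITEM2 to the positive case---append a positive~$\delta$, compare $\ww\delta$ and~$\ww'\delta$ with a positive word~$P$, then cancel~$\act\delta$ by injectivity---is sound and matches the organisation of the argument in~\cite{Dfb, Dgb}.

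The gap is the bridge you offer for that last comparison. A positive-sign handle step $\sig\ii v\,\siginv\ii\to v'$ is \emph{not} a composite of your safe moves: any derivation of it from the relations inserts a pair $\siginv\jj\sig\jj$ (negative letter first), which is exactly the move that can destroy definedness. Concretely, for $\sig1\sig2\siginv1\to\siginv2\sig1\sig2$ applied to~$(\aa,\bb,\cc)$, definedness of the source amounts to $\aa\op\bb=(\aa\op\cc)\op\yy$ for some~$\yy$, while definedness of the target amounts to $\bb=\cc\op\xx$ for some~$\xx$; the first yields the second only when the witness~$\yy$ lies in~$\Im(\LL_\aa)$, and a general left cancellative left-shelf does not guarantee this (the characterisation of~$\Im(\LL_\aa)$ in Prop.~\ref{P:LeftTr}\ITEM2 is a property of~$\Bi$ and of free shelves, not of all left cancellative shelves). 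Moreover, handle-reduced words representing positive braids need not be positive words, and handle reduction is not confluent to a unique terminal word, so ``$\ww\delta$ and~$P$ reduce to a common word'' is unsupported. The tool that actually closes this gap is subword reversing: the statement that $\vec\aa\act\ww$ defined implies $\vec\aa\act\uu\vv\inv$ defined and equal, for the reversed form~$\uu\vv\inv$ of~$\ww$, is the substantive lemma of~\cite{Dfb} (the paper invokes it again in the decidability proposition), proved by a direct induction on reversing steps rather than by the formal bookkeeping of your first paragraph. For~\ITEM1, the absorption idea is right in spirit, but the bookkeeping you defer is the entire content, and the invariant you propose breaks immediately: after a single positive crossing the entries cease to be right powers of~$\gg$, so ``the exponents stay large enough'' is not even the statement to be maintained. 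A cleaner route avoids free shelves altogether: prove by induction on~$\ww$, using common left multiples in~$\BP\nn$ and Lemma~\ref{L:ActionPos}, that the domain of~$\act\ww$ contains~$\Im(\act\delta)$ for some positive word~$\delta$, and then take a common multiple of the~$\delta$'s for a finite family of words.
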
 

In the above context, $\vec\aa \act \br$ can be unambiguously defined to be~$\vec\aa \act \ww$ for any word~$\ww$ such that $\ww$ represents~$\br$ and $\vec\aa \act \ww$ is defined, if at least one exists.

As $(\Bi, \op)$ is a left cancellative left-shelf, it is eligible for the above (total) action of~$\BP\nn$, and partial action of~$\BR\nn$ on~$(\Bi)^\nn$. Such an action is crucial for the developments of Section~\ref{S:Spec}. For the moment, we observe that the external action of~$\BR\nn$ on~$\Bi^\nn$ can be connected with an internal multiplication, at the expense of introducing a shifted version of the product.

\begin{lemm}[\bf shifted product]\label{L:ActionMult}
For~$(\br_1\wdots  \br_\nn)$ in~$\Bi^n$, define
\begin{equation}
\shprod(\br_1\wdots  \br_\nn) := \br_1\cdot \sh(\br_2) \pdots \sh^{n-1}(\br_\nn).
\end{equation}
Then, for all $(\br_1\wdots  \br_\nn)$ in~$\Bi^\nn$ and~$\br$ in~$\BP\nn$, we have
\begin{equation}
\label{E:ShiftProd}
\shprod((\br_1 \wdots  \br_\nn) \act \br) = \shprod (\br_1 \wdots \br_\nn)  \cdot \br. 
\end{equation}
\end{lemm}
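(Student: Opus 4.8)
The plan is to reduce the identity to the case of a single generator and then to propagate it multiplicatively. Write $\vec\br := (\br_1\wdots\br_\nn)$. Since $\br$ lies in the positive monoid~$\BP\nn$, which is generated by~$\sig1\wdots\sig{\nn-1}$, and since, by Lemma~\ref{L:ActionPos}, \eqref{E:Action1} defines a genuine (right) action of~$\BP\nn$ on~$\Bi^\nn$---so that $\vec\br\act(\br'\br'') = (\vec\br\act\br')\act\br''$ holds---I would argue by induction on the length of a positive word representing~$\br$. Assuming~\eqref{E:ShiftProd} for the single generators and writing $\br = \br'\sig\ii$ with $\br'$ strictly shorter, the action property gives $\shprod(\vec\br\act\br) = \shprod((\vec\br\act\br')\act\sig\ii)$; applying the generator case to the sequence~$\vec\br\act\br'$ and then the induction hypothesis to~$\br'$ yields
$$\shprod(\vec\br\act\br) = \shprod(\vec\br\act\br')\cdot\sig\ii = \shprod(\vec\br)\cdot\br'\cdot\sig\ii = \shprod(\vec\br)\cdot\br,$$
which is the desired equality. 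Thus everything comes down to the base case.

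For $\br = \sig\ii$, I would expand both sides directly from the definitions. By~\eqref{E:Action1}, the sequence $\vec\br\act\sig\ii$ coincides with~$\vec\br$ except in positions~$\ii$ and~$\ii+1$, where $\br_\ii, \br_{\ii+1}$ become $\br_\ii\op\br_{\ii+1}, \br_\ii$. Hence all factors of~$\shprod$ sitting before position~$\ii$ or after position~$\ii+1$ are unchanged, and after cancelling the common prefix~$\br_1\pdots\sh^{\ii-2}(\br_{\ii-1})$ and the common suffix~$\sh^{\ii+1}(\br_{\ii+2})\pdots\sh^{\nn-1}(\br_\nn)$ the claim~\eqref{E:ShiftProd} reduces to the single identity
$$\sh^{\ii-1}(\br_\ii\op\br_{\ii+1})\cdot\sh^\ii(\br_\ii) = \sh^{\ii-1}(\br_\ii)\cdot\sh^\ii(\br_{\ii+1})\cdot\sig\ii.$$
Here the factor~$\sig\ii$ produced on the right-hand side of~\eqref{E:ShiftProd} has first been transported leftwards past the trailing factors~$\sh^{\ii+1}(\br_{\ii+2})\pdots\sh^{\nn-1}(\br_\nn)$; this is legitimate because those factors lie in~$\Im(\sh^{\ii+1})$, hence in the subgroup generated by~$\sig{\ii+2},\sig{\ii+3}, ...$, each of which commutes with~$\sig\ii$. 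Finally, the displayed identity follows at once by expanding~$\op$ through Definition~\ref{D:BraidOp}: since $\sh^{\ii-1}(\sig1) = \sig\ii$, one gets $\sh^{\ii-1}(\br_\ii\op\br_{\ii+1}) = \sh^{\ii-1}(\br_\ii)\cdot\sh^\ii(\br_{\ii+1})\cdot\sig\ii\cdot\sh^\ii(\br_\ii)\inv$, and right-multiplying by~$\sh^\ii(\br_\ii)$ produces exactly the right-hand side.

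The computation is otherwise mechanical, so I do not expect a genuine obstacle beyond careful bookkeeping of indices. The one point that must be handled with care is the commutation step in the base case: the extra~$\sig\ii$ has to be slid into the correct slot, and this works precisely because the factors to the right of position~$\ii+1$ involve only generators~$\sig{\ii+2}$ and higher, so no braid relation of the second type in~\eqref{E:BraidPres} interferes. The restriction to positive~$\br$ is what guarantees that only~\eqref{E:Action1} (and never the partial inverse rule~\eqref{E:Action2}) is invoked, keeping the action total throughout the induction and making the reduction to generators unconditional.
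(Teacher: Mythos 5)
Your proof is correct and follows essentially the same route as the paper's: reduce to a single generator via the action property of Lemma~\ref{L:ActionPos}, then verify the generator case by expanding~$\op$ from Definition~\ref{D:BraidOp} and commuting the extra~$\sig\ii$ past the trailing factors, which involve only generators of index~$\ge \ii+2$. The paper merely packages the same computation slightly differently (further reducing $\sig\ii$ to~$\sig1$ by shift-invariance and treating $\nn=2$ first), so the two arguments coincide in substance.
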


\begin{proof} 
For an induction, it suffices to consider the case $\br = \sig\ii$.  Next, because the action of~$\sig\ii$ keeps the first $\ii - 1$ entries fixed and is a shifting of the action of~$\sig1$, it is even sufficient to consider the case $\br = \sig1$. For $\nn = 2$, we find
\begin{align*}
\shprod((&\br_1, \br_2) \act  \sig1) 
= \shprod(\br_1 \op \br_2, \br_1)
= (\br_1 \op \br_2) \cdot \sh(\br_1) \\
&= \br_1 \cdot \sh(\br_2) \cdot \sig1 \cdot \sh(\br_1)\inv \cdot \sh(\br_1) = \br_1 \cdot \sh(\br_2) \cdot \sig1 = \shprod (\br_1, \br_2) \cdot \sig1,
\end{align*}
Finally, going from $\nn = 2$ to $\nn \ge 3$ amounts to append some entries $\sh^{\jj - 1}(\br_\jj)$ with $\jj \ge 3$ on the right of the shifted products. These entries are invariant under the action of~$\sig1$, and $\sig1$ commutes with all of them, so \eqref{E:ShiftProd} remains valid.
\end{proof} 

\begin{rema}\label{R:NotSym}
Formula~\eqref{E:ShiftProd} is instrumental in subsequent applications of~$\op$, in particular in the construction of a braid orderings in Section~\ref{SS:Order}. This is where using left selfdistributivity~$\LD$, and not its right counterpart~$\RD$, matters. With the right version, a shifted product starting from the right should be considered: the problem is that one needs to consider braid sequences of arbitrary length, with no fixed position to start from. It is certainly possible to translate the statements, but at the expense of losing naturalness.
\end{rema}

\section{Special braids}\label{S:Spec}

The braid shelf~$(\Bi, \op)$ is a large structure, about which little is known, see for instance Questions~\ref{Q:FreeFam} and~\ref{Q:Gen}. Here we consider a substructure of the braid shelf, namely the one generated (as a left-shelf) by the unit braid~$1$. This structure is much better understood, as we shall explain now.

We begin by recalling (without proof) a few general results about monogenerated left-shelves, in particular a useful freeness criterion (Subsection~\ref{SS:Mono}). Then special braids and the canonical braid decompositions they lead to are described in Subsection~\ref{SS:Spec}. Finally, we recall in Subsection~\ref{SS:Order} the connection between special braids and the canonical (Dehornoy) braid order, leading to the Laver conjecture, a deep open question.

\subsection{Monogenerated left-shelves}\label{SS:Mono}

Every braid~$\br$ generates under~$\op$ a substructure of~$(\Bi, \op)$, hence a sub-left-shelf. By definition, such left-shelves are monogenerated, that is, generated by a single element. This implies a number of consequences because of the so-called comparison property involving left division.

\begin{defi}[\bf division relation]\label{D:Div}
For $\op$ a binary operation on~$\SS$ and~$\aa, \bb$ in~$\SS$, we say that $\aa$ \emph{(left) divides}~$\bb$, written $\aa \pref \bb$, if $\aa \op \xx = \bb$ holds for some~$\xx$. We write~$\prefs$ for the transitive closure of~$\pref$.
\end{defi}

If $\op$ is associative, there is no need to distinguish between~$\pref$ and~$\prefs$, since we then have $(\aa \op \xx_1) \op \xx_2 = \aa \op (\xx_1 \op \xx_2)$, but, in general, $\pref$ need not be transitive.

The following result about selfdistributivity is fundamental:

\begin{lemm}[\bf comparison property]\label{L:Comp}
If $(\SS, \op)$ is a monogenerated left-shelf and~$\aa, \bb$ belong to~$\SS$, then at least one of $\aa \prefs \bb$, $\aa = \bb$, $\bb \prefs \aa$ holds.
\end{lemm}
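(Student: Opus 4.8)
The plan is to separate the elementary equational content from the one genuinely hard point, reduce the statement to the free case by a homomorphism argument, and isolate the difficulty there.

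I would first record a small toolkit that follows at once from~\eqref{LD} and the hypothesis $(\SS,\op) = \langle\gg\rangle$. \emph{(i)} Left translations preserve division: if $\aa \pref \bb$, say $\aa \op \xx = \bb$, then $\cc \op \bb = \cc \op (\aa \op \xx) = (\cc \op \aa) \op (\cc \op \xx)$ by~\eqref{LD}, so $\cc \op \aa \pref \cc \op \bb$; hence $\cc \op {\cdot}$ also preserves~$\prefs$. \emph{(ii)} Trivially $\aa \pref \aa \op \bb$ for all $\aa, \bb$, with witness~$\bb$. \emph{(iii)} Combining these by induction on the way an element is built from~$\gg$ shows that $\gg$ is the $\prefs$-least element: if $\bb = \cc \op \yy$ with $\bb \ne \gg$, then $\cc \prefs \bb$ by~(ii), while the induction hypothesis gives $\gg = \cc$ or $\gg \prefs \cc$, so $\gg \prefs \bb$ in either case. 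Finally, absorption~\eqref{E:Abs} gives, for every~$\aa$, the equality $\gg^{[N+1]} = \aa \op \gg^{[N]}$ for all large~$N$, so $\aa \prefs \gg^{[N]}$; applying this to $\aa = \gg^{[m]}$ shows that the right powers form a $\prefs$-cofinal family that is $\prefs$-linearly ordered from some point on, and that every element divides all sufficiently large right powers.

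The key structural move is then to reduce to the free monogenerated left-shelf~$\Free$. Any shelf homomorphism~$\pi$ preserves $\pref$, since $\aa \op \uu = \bb$ yields $\pi(\aa) \op \pi(\uu) = \pi(\bb)$; hence it preserves~$\prefs$ and, obviously, equality, so it maps each instance of the comparison trichotomy upstairs to the corresponding instance downstairs. Since $\Free$ carries a canonical surjection onto $(\SS,\op)$ sending the free generator to~$\gg$, it therefore suffices to prove comparison in~$\Free$: given $\aa, \bb \in \SS$, one lifts them to preimages, compares the preimages in~$\Free$, and projects. Morally, comparison can only get easier under quotients, which is what makes this reduction sound.

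The remaining point, which I expect to be the main obstacle, is comparison in~$\Free$ itself. The toolkit of the first paragraph already places any $\aa, \bb$ below a common right power~$\gg^{[N]}$, but the difficulty is precisely that ``$\aa$ and~$\bb$ both divide~$\gg^{[N]}$'' does \emph{not}, in a general ordered structure, force $\aa$ and~$\bb$ to be comparable. Bridging this gap is equivalent to the acyclicity of~$\prefs$ together with left-cancellativity of the free system (the analogue for~$\Free$ of Prop.~\ref{P:LeftTr}\ITEM1), which is the genuinely deep ingredient. I would supply it by equipping~$\Free$ with a strict linear order~$<$ satisfying $\aa < \aa \op \bb$ and refining the division relation, obtained from the Laver--Dehornoy analysis of terms (handle reduction and normal forms) --- the same mechanism that later produces the braid order of Subsection~\ref{SS:Order} and is detailed in~\cite{Dhr}. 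Once $<$ linearly orders~$\Free$ and refines~$\prefs$, trichotomy for~$<$ transports to trichotomy for~$\prefs$, settling the free case and, through the reduction above, the lemma.
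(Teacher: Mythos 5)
Your reduction to the free monogenerated left-shelf is exactly the paper's: the author notes that morphisms preserve $\prefs$, concludes that it suffices to treat the free case, and then explicitly declines to prove that case, using it ``as a black box'' with a pointer to~\cite{Diz}. Your preliminary toolkit (left translations preserve~$\pref$, $\aa \pref \aa \op \bb$, absorption, cofinality of the right powers) is correct, and you are right that it does not by itself yield comparability.

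The genuine gap is in the mechanism you propose for the free case. A strict linear order~$<$ on~$\Free$ that \emph{refines} (i.e.\ contains) the relation~$\prefs$ carries no information about the totality of~$\prefs$: any strict partial order without cycles admits a linear extension, so producing such a~$<$ is essentially cost-free and cannot force two given elements to be $\prefs$-comparable. To ``transport trichotomy from~$<$ to~$\prefs$'' you would need the converse inclusion, that $\aa < \bb$ implies $\aa \prefs \bb$; given $\prefs\,\subseteq\,<$, that converse \emph{is} the comparison property, so the argument is circular. The accompanying claim that comparison is ``equivalent to acyclicity together with left-cancellativity'' is also off: acyclicity (irreflexivity of~$\prefs$, Lemma~\ref{L:Acyclic}, proved via braids) and comparison (totality of~$\prefs$) are independent ingredients, combined only afterwards in Lemma~\ref{L:Freeness} to produce the linear order. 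The totality itself is established by a purely syntactic analysis of terms modulo~$\eqLD$ --- common LD-expansions and the confluence of the expansion relation, not handle reduction, which is a braid-word algorithm. That combinatorial step is the heart of the lemma and is still missing from your proposal.
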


If $\phi$ is a morphism, $\aa \prefs \bb$ implies $\phi(\aa) \prefs \phi(\bb)$, so the point is to prove Lemma~\ref{L:Comp} when $\SS$ is a free left-shelf with one generator. We use the result (which is nontrivial) as a black box, referring for instance to~\cite{Diz} for an idea of the proof. 

One of the interests of the comparison property is to provide a simple criterion for recognizing \emph{free} monogenerated left-shelves. We recall the formal definition:

\begin{defi}[\bf free family, free shelf]\label{D:Free}
If $(\SS, \op)$ is left-shelf, a subfamily~$\XX$ of~$\SS$ is said to be \emph{free} in~$\SS$ if, for every left-shelf~$\SS^\sharp$, every map from~$\XX$ to~$\SS^\sharp$ extends in a morphism from the subshelf of~$\SS$ generated by~$\XX$ to~$\SS^\sharp$. We say that $(\SS, \op)$ is \emph{free based on~$\XX$} if $\XX$ generates~$\SS$ and is free in~$\SS$.
\end{defi}

\begin{lemm}[\bf freeness criterion]\label{L:Freeness}
If $(\SS, \op)$ is a left-shelf generated by a single element~$\gg$ and division has no cycle in~$\SS$, then $(\SS, \op)$ is free based on~$\gg$. Moreover, the relation~$\prefs$ is a (strict) linear order on~$\SS$, and, for all~$\aa, \bb, \cc$ in~$\SS$, we have
\begin{equation}\label{E:Order}
\aa \prefs \aa \op \bb, \qquad \text{and}\qquad \bb \prefs \cc \ \Leftrightarrow\  \aa \op \bb \prefs \aa \op \cc.
\end{equation}
\end{lemm}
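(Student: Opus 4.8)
The plan is to derive everything from the comparison property (Lemma~\ref{L:Comp}) together with the no-cycle hypothesis, treating the genuinely deep content---comparison in the free shelf---as already available. First I would establish the order properties, since they are self-contained in~$\SS$. By construction $\prefs$ is transitive, and the no-cycle hypothesis says precisely that $\aa \prefs \aa$ holds for no~$\aa$, that is, $\prefs$ is irreflexive; transitivity and irreflexivity together give (strict) antisymmetry. Lemma~\ref{L:Comp} supplies comparability, so for distinct $\aa, \bb$ exactly one of $\aa \prefs \bb$, $\bb \prefs \aa$ holds and $\prefs$ is a strict linear order. The relation $\aa \prefs \aa \op \bb$ is immediate, since $\aa \op \bb = \aa \op \bb$ witnesses $\aa \pref \aa \op \bb$; irreflexivity then forces $\aa \ne \aa \op \bb$.

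For the equivalence, the forward direction uses the law~$\LD$: if $\bb \op \yy = \cc$, then $\aa \op \cc = \aa \op (\bb \op \yy) = (\aa \op \bb) \op (\aa \op \yy)$, so $\aa \op \bb \pref \aa \op \cc$; applying this along a $\pref$-chain upgrades $\bb \prefs \cc$ to $\aa \op \bb \prefs \aa \op \cc$. The converse I would then obtain from linearity: if $\bb \prefs \cc$ fails, then $\bb = \cc$ or $\cc \prefs \bb$, and in either case the forward direction together with transitivity would produce $\aa \op \bb \prefs \aa \op \bb$, contradicting the no-cycle hypothesis.

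For freeness, I would compare $\SS$ with the free left-shelf~$\Free$ on one generator. Since $\gg$ generates~$\SS$, there is a canonical surjective morphism $\pi \colon \Free \to \SS$ sending the free generator to~$\gg$, and it suffices to prove $\pi$ injective. Any shelf morphism~$\phi$ preserves~$\pref$, hence~$\prefs$, because $\aa \op \xx = \bb$ yields $\phi(\aa) \op \phi(\xx) = \phi(\bb)$. Now take $u \ne v$ in~$\Free$; the comparison property applied to the monogenerated shelf~$\Free$ gives $u \prefs v$ or $v \prefs u$, say $u \prefs v$, whence $\pi(u) \prefs \pi(v)$ in~$\SS$. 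Were $\pi(u) = \pi(v)$, we would get $\pi(u) \prefs \pi(u)$, a cycle in~$\SS$, which is excluded; so $\pi(u) \ne \pi(v)$ and $\pi$ is injective. Thus $\pi$ is an isomorphism, and $\SS$ inherits the freeness of~$\Free$: for any left-shelf~$\SS^\sharp$ and any $\gg^\sharp$ in~$\SS^\sharp$, the unique morphism $\Free \to \SS^\sharp$ sending the generator to~$\gg^\sharp$, composed with~$\pi\inv$, gives a morphism $\SS \to \SS^\sharp$ taking~$\gg$ to~$\gg^\sharp$.

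The main obstacle is not in this argument but in its input: the comparison property for the free monogenerated left-shelf is the hard theorem, and here it is used as a black box. Once comparison is granted in both~$\Free$ and~$\SS$, every remaining step is elementary, the only real subtlety being the systematic use of the no-cycle hypothesis to convert the comparisons (which a priori permit ties or two-sided relations) into the strict, one-sided statements needed for the injectivity of~$\pi$ and for the backward implication of the translation equivalence.
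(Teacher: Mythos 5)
Your proposal is correct and follows essentially the same route as the paper: the order statements are derived exactly as in the paper (transitivity plus acyclicity give a strict order, comparison gives linearity, LD gives the forward translation property, and linearity plus acyclicity give the converse), and your freeness argument—injectivity of the evaluation map from the free monogenerated shelf, via comparison there, preservation of $\prefs$ under morphisms, and acyclicity in $\SS$—is just a repackaging of the paper's well-definedness check for the morphism $\TT(\gg) \mapsto \TT(\gg^\sharp)$. No gaps.
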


\begin{proof}
By definition, the relation~$\prefs$ is transitive, and the assumption that $\pref$ has no cycle implies that $\prefs$ is irreflexive ($\aa \prefs \aa$ is always false). Hence, $\prefs$ is a strict order on~$\SS$. The comparison property implies that this order is linear. The relation $\aa \pref \aa \op \bb$ holds by definition, hence so does~$\aa \prefs \aa \op \bb$. Moreover, $\bb \pref \cc$, say $\bb \op \xx = \cc$, implies $(\aa \op \bb) \op (\aa \op \xx) = \aa \op \cc$ by~$\LD$, hence $\aa \op \bb \pref \aa \op \cc$. Thus, $\bb \prefs \cc$ implies $\aa \op \bb \prefs \aa \op \cc$. The converse implication must holds, since $\prefs$ is a strict linear order: $\aa \op \bb \pref \aa \op \cc$ excludes $\bb = \cc$ and $\bb \mults \cc$, so $\bb \prefs \cc$ is the only possibility. So~\eqref{E:Order} is satisfied.

Let $\SS^\sharp$ be an arbitrary left-shelf, and let $\gg^\sharp$ lie in~$\SS^\sharp$. By assumption, every element of~$\SS$ is the evaluation at~$\gg$ of some expression~$\TT(\xx)$ involving a variable~$\xx$ and~$\op$ (a ``term''). We construct a morphism~$\phi$ from~$\SS$ to~$\SS^\sharp$ by mapping~$\TT(\gg)$, that is, $\TT(\xx)$ evaluated at~$\gg$ in~$\SS$, to~$\TT(\gg^\sharp)$, that is, $\TT(\xx)$ evaluated at~$\gg^\sharp$ in~$\SS^\sharp$. The problem is that an element of~$\SS$ is the evaluation of several terms, whose evaluations need not a priori coincide.

Let $\TERM\op\xx$ be the family of all terms constructed from~$\xx$ and~$\op$, and let~$\eqLD$ be the congruence on~$\TERM\op\xx$ generated by the instances of the law~$\LD$: two terms~$\TT, \TT'$ are $\eqLD$-equivalent if, and only if, one can go from~$\TT$ to~$\TT'$ by repeatedly applying the law~$\LD$. If $\TT \eqLD \TT'$ holds, the assumption that $(\SS^\sharp, \op)$ obeys the law~$\LD$ implies $\TT(\gg^\sharp) = \TT'(\gg^\sharp)$ in~$\SS^\sharp$. Now assume $\TT \not\eqLD \TT'$. By construction, $\TERM\op\xx{/}{\eqLD}$ is a monogenerated left-shelf, hence it satisfies the comparison property. The assumption $\TT \not\eqLD \TT'$ means that the classes of~$\TT$ and~$\TT'$ do not coincide, hence they must be connected by~$\prefs$ or~$\mults$. Assume the former. By definition, this means that there exists~$\nn \ge 1$ and terms~$\TT_1 \wdots \TT_\nn$ satisfying
$$( \pdots ((\TT \op \TT_1) \op \TT_2) \ \op \pdots ) \op \TT_\nn \eqLD \TT',$$
which in turn implies in the left-shelf~$\SS$
$$( \pdots ((\TT(\gg) \op \TT_1(\gg)) \op \TT_2(\gg)) \ \op \pdots ) \op \TT_\nn(\gg) \eqLD \TT'(\gg),$$
whence $\TT(\gg) \prefs \TT'(\gg)$ and, a fortiori, $\TT(\gg) \not= \TT'(\gg)$. So, finally, $\TT(\gg) = \TT(\gg')$ can occur only when $\TT \eqLD \TT'$ holds, and, therefore, it implies $\TT(\gg^\sharp) = \TT'(\gg^\sharp)$ in~$\SS^\sharp$. So the morphism~$\phi$ is well defined, and $\SS$ is free.  
\end{proof}

The previous criterion is important here because of the following results.

\begin{lemm}[\bf $\sig1$-positivity]\label{L:DivPos}
Call a braid \emph{$\sig1$-positive} if it admits at least one expression in which the letter~$\sig1$ occurs and no letter~$\siginv1$ does. Then, for all braids~$\br, \br'$, the relation $\br \prefs \br'$ in~$\Bi$ implies that $\br\inv \br'$ is $\sig1$-positive.
\end{lemm}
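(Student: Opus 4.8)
The plan is to prove the statement by induction on the transitive closure, reducing the claim to the single-step case $\br \pref \br'$. If $\br \pref \br'$ means $\br \op \xx = \br'$ for some braid $\xx$, then expanding the definition~\eqref{E:BraidOp} gives $\br' = \br \cdot \sh(\xx) \cdot \sig1 \cdot \sh(\br)\inv$, so that
\[
\br\inv \br' = \sh(\xx) \cdot \sig1 \cdot \sh(\br)\inv.
\]
The right-hand side is manifestly a word in which $\sig1$ occurs exactly once (inside the middle factor) while every other letter comes from~$\Im(\sh)$, hence is a $\sig{\ii}^{\pm1}$ with $\ii \ge 2$. In particular no $\siginv1$ appears anywhere, so $\br\inv\br'$ is $\sig1$-positive. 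This settles the base case directly from the formula.

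For the inductive step, suppose $\br \prefs \br'$ arises from a chain $\br = \br_0 \pref \br_1 \pref \cdots \pref \br_k = \br'$ with $k \ge 2$. I would write $\br\inv \br' = (\br\inv \br_1)(\br_1\inv \br')$, where the first factor is $\sig1$-positive by the base case and the second is $\sig1$-positive by the induction hypothesis on the shorter chain. The key point is then that the set of $\sig1$-positive braids is closed under multiplication: if $\uu$ and $\vv$ each admit an expression using~$\sig1$ at least once and $\siginv1$ not at all, then the concatenation of those two expressions is an expression for $\uu\vv$ with the same property. Thus $\br\inv\br'$ is $\sig1$-positive, completing the induction.

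The main subtlety—and the step I expect to require the most care—is the base case computation, specifically verifying that the outer factors $\sh(\xx)$ and $\sh(\br)\inv$ genuinely contribute \emph{no} occurrence of $\sig1^{\pm1}$. This is exactly where the shift endomorphism~$\sh$ does the essential work: since $\sh$ sends each~$\sig\ii$ to~$\sig{\ii+1}$, every letter of $\sh(\xx)$ and of $\sh(\br)\inv$ is $\sig\jj^{\pm1}$ with $\jj \ge 2$, so the lone $\sig1$ in the middle is the only appearance of any $\sig1^{\pm1}$ in the displayed word, and it is positive. I would emphasize that this uses only the \emph{definition} of $\sig1$-positivity (existence of one good expression), not any normal-form or reduced-word property, which is why the argument stays elementary and no appeal to the more delicate theory of $\sigma$-positive braids is needed at this stage.
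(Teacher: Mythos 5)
Your proof is correct and follows essentially the same route as the paper: expand the definition of~$\op$ and observe that the resulting word contains only positive occurrences of~$\sig1$ together with letters from~$\Im(\sh)$. The paper simply writes out the full expansion $\br\inv\br' = \sh(\brr_0)\,\sig1\,\sh(\brr_1)\,\sig1 \pdots \sig1\,\sh(\brr_\nn)$ for the whole chain at once, whereas you package the same computation as an induction plus the (correct) remark that $\sig1$-positive braids are closed under multiplication.
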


\begin{proof}
By definition, $\br \prefs \br'$ holds if, and only if, there exist~$\nn \ge 1$ and $\br_1 \wdots \br_\nn$ satisfying 
\begin{equation}\label{E:Acyclic1}
(\pdots ((\br \op \br_1) \op \br_2) \pdots ) \op \br_\nn = \br'.
\end{equation}
According to the definition of~$\op$, this expands into an equality of the form
\begin{equation}\label{E:Acyclic2}
\br\inv \br' = \sh(\brr_0) \, \sig1 \, \sh(\brr_1) \, \sig1\,  \pdots \, \sig1\, \sh(\brr_\nn),
\end{equation}
where the right hand term is explicitly $\sig1$-positive.
\end{proof}

\begin{lemm}[\bf no cycle]\cite{Dfb}\label{L:Acyclic}
Division has no cycle in~$(\Bi, \op)$.
\end{lemm}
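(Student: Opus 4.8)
The plan is to reduce the acyclicity of division to the single deep fact about braids that $\sig1$-positivity records, and then to read off the conclusion from Lemma~\ref{L:DivPos}.

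First I would reformulate the statement. By definition, a \emph{cycle} for division is a sequence $\br_0 \pref \br_1 \pref \pdots \pref \br_\nn$ with $\br_\nn = \br_0$ and $\nn \ge 1$. Since $\prefs$ is the transitive closure of~$\pref$, such a cycle exists if, and only if, $\br \prefs \br$ holds for some braid~$\br$ in~$\Bi$. So it is enough to prove that $\br \prefs \br$ is impossible, that is, that $\prefs$ is irreflexive.

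Next I would invoke Lemma~\ref{L:DivPos}. Assume toward a contradiction that $\br \prefs \br$ holds for some~$\br$. Applying Lemma~\ref{L:DivPos} with $\br' = \br$, the relation $\br \prefs \br$ forces the braid $\br\inv \br$ to be $\sig1$-positive. But $\br\inv \br$ is the trivial braid~$1$. Hence the acyclicity of division is equivalent to the assertion that the trivial braid is \emph{not} $\sig1$-positive.

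Thus the whole lemma comes down to the following fact: no braid word representing~$1$ may contain an occurrence of the letter~$\sig1$ while containing no occurrence of~$\siginv1$. This is exactly the fundamental ``Property~A'' of the braid group, equivalent to the existence of the Dehornoy left-order on~$\Bi$, whose positive cone is precisely the set of $\sig1$-positive (more generally, $\sigma$-positive) braids; it is established in~\cite{Dfb}, either through the original selfdistributivity argument or through handle reduction. I expect this to be the sole genuine obstacle: the nontriviality of $\sig1$-positive braids is deep, whereas the reduction above is purely formal once that input is granted. I would therefore present the proof as this reduction, citing~\cite{Dfb} for the fact that a $\sig1$-positive braid is never trivial.
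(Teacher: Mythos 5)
Your proposal is correct and follows essentially the same route as the paper: reduce via Lemma~\ref{L:DivPos} to the statement that a $\sigma_1$-positive braid is never trivial, and invoke~\cite{Dfb} for that deep fact. The only difference is that the paper goes one step further and sketches Larue's proof of this key input via the Artin representation of~$B_\infty$ in~$\mathsf{Aut}(F_\infty)$, whereas you treat it as a black box, which is acceptable since the lemma is attributed to~\cite{Dfb} anyway.
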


\begin{prproof}
By Lemma~\ref{L:DivPos}, a cycle for~$\prefs$, hence a relation~$\br \prefs \br$, would provide a $\sig1$-positive braid that is trivial (equal to~$1$). Hence, for excluding~\eqref{E:Acyclic1}, it suffices to prove that a $\sig1$-positive braid is never trivial. 

Several arguments exist, see in particular~\cite{Dfb, Dfo}. The simplest argument is the one, due to D.\,Larue~\cite{Lra}, that appeals to the Artin representation of~$\Bi$ in~$\Aut(\Fi)$, where $\Fi$ is a free group based on an infinite family $\{\xx_\ii \mid \ii \ge 1\}$, identified with the family of freely reduced words on~$\{\xx_\ii^{\pm1} \mid \ii \ge 1\}$. Artin's representation is defined by the rules
\begin{equation*}
\rho(\sig\ii)(\xx_\ii) := \xx_\ii \xx_{\ii + 1} \xx_\ii\inv, \quad \rho(\sig\ii)(\xx_{\ii + 1}) := \xx_\ii, \quad \rho(\sig\ii)(\xx_\kk) := \xx_\kk \text{ for $\kk \not= \ii, \ii + 1$},
\end{equation*}
and simple arguments about free reduction show that, if $\brr$ is a $\sig1$-positive braid, then $\rho(\brr)$ maps~$\xx_1$ to a reduced word that finishes with the letter~$\xx_1\inv$ and, therefore, $\brr$ cannot be trivial, since $\rho(1)$ maps~$\xx_1$ to~$\xx_1$, which does not finish with~$\xx_1\inv$. 
\end{prproof}

Merging Lemma~\ref{L:Acyclic} with the criterion of Lemma~\ref{L:Freeness}, we deduce

\begin{prop}[\bf free]\label{P:Free}
For every braid~$\br$, the substructure~$\langle\br\rangle$ of~$(\Bi, \op)$ generated by~$\br$ is free based on~$\{\br\}$. Moreover, the division relation provides a linear order on~$\langle\br\rangle$ that satisfies~\eqref{E:Order}.
\end{prop}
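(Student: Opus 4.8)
The plan is to obtain Proposition~\ref{P:Free} as an immediate application of the freeness criterion (Lemma~\ref{L:Freeness}) to the monogenerated sub-left-shelf~$\langle\br\rangle$. The criterion requires exactly two inputs: that $\langle\br\rangle$ is generated by a single element, and that division has no cycle in~$\langle\br\rangle$. The first is true by construction, since $\langle\br\rangle$ is \emph{defined} as the substructure generated by~$\br$. For the second, I would invoke Lemma~\ref{L:Acyclic}, which asserts that division has no cycle in the whole of~$(\Bi, \op)$.

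The one technical point deserving care is the passage from acyclicity in~$(\Bi, \op)$ to acyclicity in the subshelf~$\langle\br\rangle$. First I would observe that the division relation~$\prefs$ on~$\langle\br\rangle$ is simply the restriction of the division relation on~$\Bi$: if $\aa, \bb$ lie in~$\langle\br\rangle$ and $\aa \op \xx = \bb$ holds for some $\xx$, one may always take $\xx$ inside~$\langle\br\rangle$ because the comparison property (Lemma~\ref{L:Comp}) guarantees the witnessing elements are $\op$-expressions in~$\br$. In any event, a cycle for~$\prefs$ inside~$\langle\br\rangle$ is a fortiori a cycle for~$\prefs$ in~$\Bi$, so the absence of cycles in~$\Bi$ forces their absence in~$\langle\br\rangle$. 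Hence the hypotheses of Lemma~\ref{L:Freeness} are met with $\gg := \br$.

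Applying Lemma~\ref{L:Freeness} then yields directly that $(\langle\br\rangle, \op)$ is free based on~$\{\br\}$, that $\prefs$ is a strict linear order on~$\langle\br\rangle$, and that the order equalities in~\eqref{E:Order} hold for all $\aa, \bb, \cc$ in~$\langle\br\rangle$. This delivers every clause of the proposition.

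I do not expect a genuine obstacle here: the substance has been isolated in the two lemmas, one of which (Lemma~\ref{L:Acyclic}) carries the only nontrivial content via Larue's Artin-representation argument, the other (Lemma~\ref{L:Freeness}) packaging the comparison property into a usable freeness test. The sole thing to state explicitly is that acyclicity is inherited by subshelves, which is immediate. The result is thus essentially a merging of Lemmas~\ref{L:Acyclic} and~\ref{L:Freeness}, as announced in the sentence preceding the statement.
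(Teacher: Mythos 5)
Your proposal is correct and matches the paper exactly: the paper derives Proposition~\ref{P:Free} in one line by ``merging Lemma~\ref{L:Acyclic} with the criterion of Lemma~\ref{L:Freeness}'', which is precisely your argument. The only observation genuinely needed for the transfer of acyclicity is the one you already make --- a cycle for~$\prefs$ in~$\langle\br\rangle$ is a fortiori a cycle in~$\Bi$ --- so the extra appeal to the comparison property to locate witnesses inside~$\langle\br\rangle$ is unnecessary.
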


Another consequence of Lemma~\ref{L:Acyclic} is that the sufficient freeness condition of Lemma~\ref{L:Freeness} is also necessary:

\begin{coro}\label{C:FreeAcyclic}
Division in a free left-shelf has no cycle.
\end{coro}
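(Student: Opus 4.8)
The plan is to reduce the general statement to the case of a \emph{single} generator, which is exactly the setting in which Proposition~\ref{P:Free} and Lemma~\ref{L:Acyclic} apply. So I would first settle the monogenerated case and then transport it to an arbitrary free left-shelf by collapsing all generators onto one.

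For the core case, let $\FF_1$ denote \emph{the} free left-shelf based on one generator, unique up to isomorphism as a free object. By Proposition~\ref{P:Free}, for any braid~$\br$ the substructure~$\langle\br\rangle$ of~$(\Bi, \op)$ is free based on~$\{\br\}$, so $\FF_1$ is isomorphic to~$\langle\br\rangle$. Next I would observe that division in~$\langle\br\rangle$ inherits acyclicity from the ambient shelf: if $\aa \op \xx = \bb$ holds with $\aa, \bb, \xx$ in~$\langle\br\rangle$, then the same equality holds in~$\Bi$, so $\pref$ computed inside~$\langle\br\rangle$ is contained in~$\pref$ computed in~$\Bi$, and likewise for the transitive closures~$\prefs$. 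Hence any cycle $\aa \prefs \aa$ in~$\langle\br\rangle$ would be a cycle in~$(\Bi, \op)$, contradicting Lemma~\ref{L:Acyclic}. Since an isomorphism preserves~$\op$, it preserves~$\pref$ and~$\prefs$ in both directions, so acyclicity passes from~$\langle\br\rangle$ to~$\FF_1$.

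To reach an arbitrary free left-shelf~$\FF$ based on a set~$\XX$, with the case $\XX = \emptyset$ vacuous, I would invoke the universal property to produce the morphism $\phi \colon \FF \to \FF_1$ sending every generator in~$\XX$ to the single generator~$\gg$ of~$\FF_1$. As recorded just after Lemma~\ref{L:Comp}, a morphism satisfies $\aa \prefs \bb \Rightarrow \phi(\aa) \prefs \phi(\bb)$. Consequently a cycle $\aa \prefs \aa$ in~$\FF$ would yield $\phi(\aa) \prefs \phi(\aa)$ in~$\FF_1$, which the core case forbids. Therefore division in~$\FF$ has no cycle.

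Since every ingredient is already in place upstream---the acyclicity of~$(\Bi, \op)$ through the Artin representation, and the freeness of~$\langle\br\rangle$---the corollary is essentially bookkeeping, and I do not expect a serious obstacle. The one point demanding genuine care is that the reduction relies on a \emph{non-injective} morphism: what legitimizes it is precisely that $\prefs$, being preserved by every morphism, carries cycles to cycles even under the collapse $\XX \mapsto \{\gg\}$, so no information about the cycle is lost when several generators are identified.
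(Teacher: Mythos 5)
Your argument is correct and is essentially the paper's: the paper simply composes your two steps into one, using the universal property to send every generator of the free left-shelf directly to the braid $1$ in $(\Bi,\op)$ and then pushing a hypothetical cycle forward along that single morphism to contradict Lemma~\ref{L:Acyclic}. Your detour through the monogenerated free shelf $\FF_1$ and its realization as $\langle\br\rangle$ is a harmless factorization of the same morphism, and the key point you flag---that $\prefs$ is preserved by arbitrary (non-injective) morphisms, so cycles survive the collapse of generators---is exactly the point the paper relies on.
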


\begin{proof}
Let $(\SS, \op)$ be a free left-shelf based on~$\XX$. By the universal property of free left-shelves, the map from~$\XX$ to~$\Bi$ sending every element to the braid~$1$ extends to a morphism~$\pi$ from~$(\SS, \op)$ to~$(\Bi, \op)$. Then the image under~$\pi$ of a cycle for~$\pref$ in~$\SS$ would be a cycle for~$\pref$ in~$\Bi$. By Lemma~\ref{L:Acyclic}, such a cycle cannot exist. 
\end{proof}

\subsection{Special braids and special decompositions}\label{SS:Spec}

Hereafter, we concentrate on the particular case of the substructure of~$(\Bi, \op)$ generated by~$1$ (unit braid).

\begin{defi}[\bf special braid]\label{D:Special}
We denote by~$\Bisp$ the closure of~$\{1\}$ in~$(\Bi, \op)$. The elements of~$\Bisp$ are called \emph{special} braids.
\end{defi}

A braid is special if it admits an expression that exclusively involves the braid~$1$ and the operation~$\op$---in other words, if it is the evaluation at~$1$ of a term of~$\TERM\op\xx$ in~$(\Bi, \op)$. For instance, $1$, $\sig1$, $\sig2 \sig1$, $\sigg12 \sig2\inv$ are special braids, as we have 
$$\sig1 = 1 \op 1, \quad \sig2 \sig1 = 1 \op (1 \op 1), \quad \sigg12\sig2\inv = (1 \op 1) \op 1.$$
Similarly, the braid $\sig\mm \,{\pdots}\, \sig2\sig1$ is special, as we saw above that it is the right power~$1^{[\mm + 1]}$. On the other hand, lots of braids are not special: for instance, Lemma~\ref{L:PosSpec} below implies that $\sig\ii$ is not special for $\ii \ge 2$.

Let us begin with a closure property of~$\Bisp$.

\begin{lemm}[\bf closure]\label{L:SpecDiv}
Special braids are closed under left division, in the sense that, if $\br$ and~$\brr$ are special and $\br \op \br' = \brr$ holds, then $\br'$ is necessarily special.
\end{lemm}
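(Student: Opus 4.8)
The plan is to work inside the monogenerated left-shelf~$\Bisp = \langle 1\rangle$, compare~$\br$ with~$\brr$ by means of the comparison property (Lemma~\ref{L:Comp}), and then upgrade the resulting \emph{iterated} division into a \emph{single} division, so that left-cancellativity pins down~$\br'$.

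First I would apply Lemma~\ref{L:Comp} to the two special braids~$\br$ and~$\brr$, getting the trichotomy $\br \prefs \brr$, $\br = \brr$, or $\brr \prefs \br$, all relations computed in~$\Bisp$. Two of these are cheap to discard. If $\br = \brr$, then $\br \op \br' = \br$ expands to $\br\cdot\sh(\br')\,\sig1\,\sh(\br)\inv = \br$; cancelling~$\br$ in the group~$\Bi$ gives $\sh(\br')\,\sig1 = \sh(\br)$, whence $\sig1 = \sh(\br')\inv\sh(\br) = \sh(\br'\inv\br) \in \Im(\sh)$, contradicting $\sig1 \notin \Im(\sh)$. If $\brr \prefs \br$, then this relation holds a fortiori in~$(\Bi,\op)$; combined with $\br \pref \brr$ coming from $\br \op \br' = \brr$, transitivity of~$\prefs$ yields a cycle $\br \prefs \br$ in~$(\Bi,\op)$, contradicting Lemma~\ref{L:Acyclic}. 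Hence only $\br \prefs \brr$ survives, and in particular $\brr \neq 1$.

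Next I would reduce Lemma~\ref{L:SpecDiv} to a purely internal statement about~$\Bisp$: \emph{if $\br, \brr$ are special and $\br \prefs \brr$, then $\br \op \br'' = \brr$ for some \emph{special}~$\br''$.} Granting this, the left-cancellativity part of Proposition~\ref{P:LeftTr} forces the given~$\br'$ to coincide with~$\br''$, hence to be special. I would prove the internal statement by induction on the complexity of~$\brr$, i.e. on the least number of occurrences of~$\op$ in a term of~$\Bisp$ evaluating to~$\brr$. Taking a minimal decomposition $\brr = \brr_1 \op \brr_2$ with $\brr_1, \brr_2$ special of strictly smaller complexity, I would compare~$\br$ with~$\brr_1$: if $\br = \brr_1$ we finish with $\br'' = \brr_2$; if $\br \prefs \brr_1$, the induction hypothesis produces special~$\dd, \ee$ with $\br \op \dd = \brr_1$ and $\br \op \ee = \brr_2$, and then the law~$\LD$ amalgamates them, $\brr = (\br\op\dd)\op(\br\op\ee) = \br \op (\dd \op \ee)$, so $\br'' := \dd\op\ee$ works. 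The order relations~\eqref{E:Order} of Proposition~\ref{P:Free} are what make these comparisons and the resulting monotonicity available.

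The \emph{main obstacle} is exactly the case where the right factor~$\brr_2$ is \emph{not} a left-multiple of~$\br$, that is, $\brr_2 \preceq \br$ in~$\Bisp$: there the $\LD$-amalgamation $(\br\op\dd)\op(\br\op\ee) = \br\op(\dd\op\ee)$ is unavailable, since it requires $\brr_2 \in \br\op\Bisp$. Overcoming this is the crux, and I expect it to need more than bare comparison—specifically, exploiting the linearity of~$\prefs$ on~$\Bisp$ together with the order isomorphism $\yy \mapsto \br \op \yy$ of~\eqref{E:Order} to prove that the up-set $\{\xx \in \Bisp : \br \prefs \xx\}$ is precisely the image~$\br\op\Bisp$; equivalently, that a single left-division from~$\br$ already realizes every element of~$\Bisp$ lying $\prefs$-above~$\br$. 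This is the step I would budget the most effort for; once it is in place, the remaining bookkeeping in the induction is routine.
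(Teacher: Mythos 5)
There is a genuine gap, and it sits exactly where you budgeted your effort. Your reduction to the ``purely internal statement'' --- \emph{if $\br, \brr$ are special and $\br \prefs \brr$, then $\brr = \br \op \br''$ for some special $\br''$} --- throws away the hypothesis that $\brr$ actually lies in the image of the left translation~$\LL_\br$, and the internal statement without that hypothesis is false. Take $\br = 1$ and $\brr = 1_{[3]} = (1 \op 1)\op 1 = \sigg12\siginv2$: we have $1 \pref 1 \op 1 \pref 1_{[3]}$, hence $1 \prefs 1_{[3]}$ with both braids special, yet $1 \op \xx = \sh(\xx)\,\sig1$ would force $1_{[3]}\siginv1 = \sigg12\siginv2\siginv1$ to lie in~$\Im(\sh)$, which fails since its underlying permutation moves~$1$. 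The same example refutes the crux you propose to establish, namely that the up-set $\{\xx \in \Bisp : \br \prefs \xx\}$ equals $\br \op \Bisp$: iterated division from~$\br$ reaches strictly more elements than single division does, so no amount of effort will close the argument along this route. (Your induction also quietly assumes $\br \prefs \brr_2$ in order to invoke the hypothesis on the right factor, which is not granted.)

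The paper's proof instead keeps the divisibility hypothesis alive in an equational form: by the easy direction of Prop.~\ref{P:LeftTr}\ITEM2, the existence of \emph{some} $\br'$ in~$\Bi$ with $\br \op \br' = \brr$ yields $\br \op \brr = \br^{[2]} \op \brr$, an identity involving only the special braids~$\br, \brr$ and hence holding inside~$\Bisp$. The substance is then the converse \emph{inside the free monogenerated left-shelf}: in~$\Bisp$, the equation $\br \op \brr = \br^{[2]} \op \brr$ implies the existence of a special~$\br''$ with $\br \op \br'' = \brr$. This is a genuinely deep theorem resting on the explicit normal form for free shelves~\cite{Dgd}; it is not recoverable from the comparison property of Lemma~\ref{L:Comp} and the order relations~\eqref{E:Order} alone. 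Once that black box is granted, left cancellativity (Prop.~\ref{P:LeftTr}\ITEM1) gives $\br' = \br''$, hence $\br'$ is special. The missing idea in your argument is precisely this substitution of the equational membership criterion for $\Im(\LL_\br)$ in place of $\prefs$-comparability.
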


\begin{prproof}
By the easy direction of Prop.~\ref{P:LeftTr}, the existence of~$\br'$ in~$\Bi$ satisfying $\br \op \br' = \brr$ implies $\br \op \brr = \br^{[2]} \op \brr$ in~$\Bi$, hence in~$\Bisp$. Conversely, it is known that, since $(\Bisp, \op)$ is a free monogenerated left-shelf, $\br \op \brr = \br^{[2]} \op \brr$ implies the existence in~$\Bisp$ of~$\br''$ satisfying $\br \op \br'' = \brr$: this is a highly nontrivial result about free shelves based on the existence of an explicit normal form~\cite{Dgd}. Then left cancellativity forces~$\br' = \br''$, hence $\br' \in \Bisp$.
\end{prproof}

As it is a substructure of~$(\Bi, \op)$, the structure $(\Bisp, \op)$ is a left-shelf, hence eligible for  Lemma~\ref{L:ActionPos}, and, moreover, it is left cancellative, hence eligible for Lemma~\ref{L:Action}. Therefore, there exists a (total) action of~$\BP\nn$ on~$(\Bisp)^\nn$, and a partial action of~$\BR\nn$ on~$(\Bisp)^\nn$. These actions are fundamental in the sequel. 

The first result is a characterization of special braids in terms of the action.

\begin{lemm}[{\bf special} {\it vs.} {\bf action}]\label{L:RecSpec}
(See Fig.~\ref{F:Spec}.) A braid~$\br$ is special if, and only if, $(1, 1, 1, \pdots) \act \br$ exists and is equal to~$(\br, 1, 1, \pdots)$.
\end{lemm}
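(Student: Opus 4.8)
The plan is to prove the two implications separately. Throughout, write $\vec1$ for the constant sequence $(1, 1, 1, \pdots)$, and use two elementary features of the partial action on sequences of braids. First, \emph{shift compatibility}: if $(\aa_2, \aa_3, \pdots) \act \br$ is defined and equals $(\bb_2, \bb_3, \pdots)$, then $(\aa_1, \aa_2, \aa_3, \pdots) \act \sh(\br)$ is defined and equals $(\aa_1, \bb_2, \bb_3, \pdots)$, because $\sh$ sends each~$\sig\ii$ to~$\sig{\ii + 1}$ and the rules \eqref{E:Action1}--\eqref{E:Action2} only touch two consecutive entries. Second, \emph{reversibility}: if $\vec\aa \act \br$ is defined and equals~$\vec\bb$, then $\vec\bb \act \br\inv$ is defined and equals~$\vec\aa$; this is immediate from the second clause of Lemma~\ref{L:Action} applied to $\br \cdot \br\inv = 1$, or by a one-line check on a single generator, where it rests on left cancellativity (Prop.~\ref{P:LeftTr}).

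For the \emph{only if} direction I would induct on the way a special braid is built from~$1$. If $\br = 1$, then $\vec1 \act 1 = \vec1 = (\br, 1, 1, \pdots)$. For the inductive step, suppose $\br = \br_1 \op \br_2$ with $\br_1, \br_2$ special and already known to satisfy the statement. Expanding~$\op$ through its definition~\eqref{E:BraidOp}, I would evaluate $\vec1 \act (\br_1 \cdot \sh(\br_2) \cdot \sig1 \cdot \sh(\br_1)\inv)$ in four stages: $\vec1 \act \br_1 = (\br_1, 1, 1, \pdots)$ by the hypothesis on~$\br_1$; then $\act \sh(\br_2) = (\br_1, \br_2, 1, \pdots)$ by shift compatibility and the hypothesis on~$\br_2$; then $\act \sig1 = (\br_1 \op \br_2, \br_1, 1, \pdots)$ by the rule~\eqref{E:Action1}; and finally $\act \sh(\br_1)\inv = (\br_1 \op \br_2, 1, 1, \pdots)$, using shift compatibility together with reversibility, since $(\br_1, 1, 1, \pdots) \act \br_1\inv = \vec1$. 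As $\br_1 \op \br_2 = \br$, the result is $(\br, 1, 1, \pdots)$, closing the induction. This is exactly the recursive colouring drawn in Fig.~\ref{F:Spec}.

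The \emph{if} direction is the substantial one, and I expect it to be the main obstacle. Here I would argue by induction on a complexity measure (say the number of crossings) of a word~$\ww$ representing~$\br$ for which $\vec1 \act \ww$ is defined, the aim being to peel off one application of~$\op$. A first handle is that such a word must begin with a \emph{positive} letter: if its first letter were~$\siginv\ii$, the rule~\eqref{E:Action2} would require the value $1 \opb 1$, that is, a braid~$\bb$ with $1 \op \bb = 1$, which does not exist since $1 \op \bb = \sh(\bb) \, \sig1$ can never equal~$1$ (as $\sig1 \notin \Im(\sh)$). Combining the shape of the output --- every entry past the first being~$1$ --- with left cancellativity and the shifted-product bookkeeping of Lemma~\ref{L:ActionMult} (valid on positive parts, where it forces $\shprod(\vec1 \act \br) = \br$) as a consistency constraint, the goal is to locate a factorisation $\br = \br_1 \op \br_2$ in which $\br_1$ and~$\br_2$ are again witnessed as special by strictly simpler realizing words, and then to invoke the induction hypothesis.

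The genuine difficulty is this extraction step: reading the factors $\br_1, \br_2$ off the coloured diagram and certifying both that they are special and that the complexity strictly decreases, all while controlling the negative (partial) steps of the action, where colours propagate through~$\opb$ rather than~$\op$. I expect that making this descent well-founded cannot be achieved by bare manipulation and must borrow from the normal-form theory of free shelves --- the same nontrivial input invoked in the proof of Lemma~\ref{L:SpecDiv} --- to guarantee that the peeled-off factors indeed lie in~$\Bisp$.
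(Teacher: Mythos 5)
Your \emph{only if} direction is correct and is exactly the paper's argument: the same four-stage evaluation of $\vec1 \act (\br_1 \cdot \sh(\br_2) \cdot \sig1 \cdot \sh(\br_1)\inv)$, using the induction hypothesis on~$\br_1$ and~$\br_2$ and reversibility of the diagram for the final step.

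The \emph{if} direction, however, has a genuine gap, and one you yourself flag: the ``extraction step'' in which you would peel off a factorisation $\br = \br_1 \op \br_2$ from a realizing word and recurse on complexity is never carried out, and it is not clear it can be made well-founded as described (a word for~$\br$ with fewer crossings need not exhibit an outermost~$\op$, and the negative letters scramble any naive crossing count). The point you are missing is that no such descent is needed. The correct argument is a one-step invariant on the colouring itself: \emph{every} colour appearing anywhere in the computation of $(1,1,1,\pdots) \act \ww$ is special. This holds for the empty word since the input colours are~$1$; it is preserved by each positive letter~$\sig\ii$ because $\Bisp$ is closed under~$\op$; and it is preserved by each negative letter~$\siginv\ii$ because the new colour is a left quotient $\aa \opb \bb$ of two special braids, which is special by Lemma~\ref{L:SpecDiv} (closure of~$\Bisp$ under left division --- this is where the nontrivial free-shelf input enters, not in any factorisation search). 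Since the hypothesis already tells you the first output colour \emph{is}~$\br$, you conclude immediately that $\br$ is special. So Lemma~\ref{L:SpecDiv}, which you correctly sense is the essential ingredient, is used directly as a closure property of the set of colours, not to certify factors produced by a descent.
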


\begin{proof}
First, we inductively show that the condition is satisfied for every special braid~$\br$. It is obvious for~$\br = 1$. For $\br = \br_1 \op \br_2$ with $\br_1, \br_2$ special, we find
\begin{align*}
(1, 1, \pdots) \act  \br
&= ((((1, 1, \pdots) \act  \br_1) \act  \sh(\br_2)) \act  \sig1) \act  \sh(\br_1)\inv \\
&= (((\br_1, 1, 1, \pdots) \act  \sh(\br_2)) \act  \sig1) \act  \sh(\br_1)\inv \\
&= ((\br_1, \br_2, 1, 1, \pdots) \act  \sig1) \act  \sh(\br_1)\inv \\
&= (\br, \br_1, 1, 1, \pdots) \act  \sh(\br_1)\inv = (\br, 1, 1, 1, \pdots).
\end{align*}
For the last step, the induction hypothesis for~$\br_1$ implies that $(1, 1, 1, \pdots) \act  \br_1$ is defined and equal to~$(\br_1, 1, 1, \pdots)$. By reversing the diagram, we deduce that that $(\br_1, 1, 1, \pdots) \act  \br_1\inv$ is defined and equal to~$(1, 1, \pdots)$, and, from there, that $(\br, \br_1, 1, 1, \pdots) \act  \sh(\br_1)\inv$ is defined and equal to~$(\br, 1, 1, \pdots)$, as expected. 

Conversely, we claim that, whenever $(1, 1, 1, \pdots) \act \ww = (\br_1, \br_2, \pdots)$ holds, then all braids~$\br_\ii$ are special. The claim is true when $\ww$ is empty. It is preserved under adding a positive letter~$\sig\ii$, because special braids are closed under~$\op$, and it is preserved under adding a negative letter~$\siginv\ii$ because of Lemma~\ref{L:SpecDiv}. So, in particular, $(1, 1, 1, \pdots) \act \br = (\br, 1, 1, \pdots)$ implies that $\br$ is special.
\end{proof}

\begin{figure}[htb]
$$\begin{picture}(37,16)(0,0)
\psline[linewidth=2pt]{c->}(0,0)(37,0)
\psline[linewidth=2pt]{c->}(0,6)(37,6)
\psline[linewidth=2pt]{c->}(0,12)(37,12)
\psline[linewidth=2pt]{c->}(0,18)(37,18)
\psframe[fillstyle=solid,fillcolor=white,linecolor=white](5,-2)(30,20)\psline[linewidth=0.5pt](5,20)(5,-2)(30,-2)(30,20)
\put(16,8){$\br$}
\put(17,16.5){$\vdots$}
\put(-3,-0.5){$1$}
\put(-3,5.5){$1$}
\put(-3,11.5){$1$}
\put(-3,17.5){$1$}
\put(39,-0.5){$\br$}
\put(39,5.5){$1$}
\put(39,11.5){$1$}
\put(39,17.5){$1$}
\end{picture}$$
\caption{\sf A special braid is a braid that produces itself using braid coloring and starting from unit braids.}
\label{F:Spec}
\end{figure}

We now arrive at the main result, namely decompositions of arbitrary braids in terms of special braids.

\begin{prop}[\bf special decomposition]\label{P:Decomp}
\ITEM1  For every braid~$\br$ in~$\BP\nn$, there is a unique sequence of special braids~$\br_1 \wdots \br_\nn$ satisfying
\begin{equation}\label{E:Decomp1}
\br = \br_1 \cdot \sh(\br_2) \,\cdot \pdots \cdot\, \sh^{n-1}(\br_\nn).
\end{equation}
\ITEM2 For every braid~$\br$ in~$\BR\nn$, there are special braids~$\br_1 \wdots \br_\nn$, $\br'_1 \wdots \br'_\nn$ satisfying
\begin{equation}\label{E:Decomp2}
\br = \sh^{n-1}(\br_n\inv) \cdot \pdots \cdot \sh(\br_2\inv) \cdot {\br_1}\inv \cdot \br'_1 \cdot \sh(\br'_2) \cdot \pdots \cdot \sh^{n-1}(\br'_n).
\end{equation}
\end{prop}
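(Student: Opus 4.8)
The plan is to derive both statements from the action machinery of Lemmas~\ref{L:ActionPos}, \ref{L:Action}, and~\ref{L:ActionMult}, using the characterization of special braids in Lemma~\ref{L:RecSpec} as the bridge between the external action and the internal shifted product. The key observation is that applying a braid~$\br$ of~$\BP\nn$ to the base sequence $(1, 1, \pdots)$ produces a sequence $(\br_1 \wdots \br_\nn)$ whose entries are automatically special, by the converse half of Lemma~\ref{L:RecSpec}, and whose shifted product recovers~$\br$ by Lemma~\ref{L:ActionMult}.

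For \ITEM1, I would set $(\br_1 \wdots \br_\nn) := (1, 1, \pdots) \act \br$, restricting attention to the first~$\nn$ entries (the remaining entries stay equal to~$1$ since $\br \in \BP\nn$ only touches the first $\nn$ strands). Since $\br$ is positive, the action is total by Lemma~\ref{L:ActionPos}, so this sequence is defined, and each~$\br_\ii$ is special by the argument in the proof of Lemma~\ref{L:RecSpec}. Applying Lemma~\ref{L:ActionMult} with the base sequence, whose shifted product is $\shprod(1, 1, \pdots) = 1$, gives
$$\shprod((1, 1, \pdots) \act \br) = 1 \cdot \br = \br,$$
which is exactly~\eqref{E:Decomp1}. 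For \emph{uniqueness}, suppose two sequences of special braids give the same shifted product; I would push this back through the action. The point is that the map sending a sequence of special braids to its shifted product is injective: using that special braids are freely generated (Prop.~\ref{P:Free}) and comparing via the partial action applied to $(\br_1 \wdots \br_\nn)$, one recovers the individual entries, so the decomposition is forced. Concretely, applying $(1, 1, \pdots) \act \br$ can only yield one sequence, and by Lemma~\ref{L:RecSpec} each special entry~$\br_\ii$ itself satisfies $(1, \pdots) \act \br_\ii = (\br_\ii, 1, \pdots)$, which pins it down.

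For \ITEM2, I would use that every braid~$\br$ in~$\BR\nn$ splits as $\br = \br^-{}^{-1} \br^+$ for suitable positive braids $\br^-, \br^+ \in \BP\nn$ (write $\br$ as a word and collect the negative letters to the left after inserting trivial cancelling pairs; more cleanly, any braid is a left-quotient of two positive braids). Applying \ITEM1 to each of $\br^+$ and $\br^-$ yields special sequences giving $\br^+ = \br'_1 \cdot \sh(\br'_2) \cdots \sh^{\nn-1}(\br'_\nn)$ and $\br^- = \br_1 \cdot \sh(\br_2) \cdots \sh^{\nn-1}(\br_\nn)$; inverting the latter reverses the order of the factors and inverts each shifted power, producing the prefix $\sh^{\nn-1}(\br_\nn\inv) \cdots \sh(\br_2\inv) \cdot \br_1\inv$, and concatenating gives~\eqref{E:Decomp2}.

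The main obstacle I expect is the \emph{uniqueness} in \ITEM1, since it rests on injectivity of the shifted-product map on sequences of special braids; this is where the freeness of~$\Bisp$ (Prop.~\ref{P:Free}) and the faithfulness of the partial action (Lemma~\ref{L:Action}\ITEM2) must be combined carefully, rather than being a routine computation. A secondary, more bookkeeping-level point is that \ITEM2 asserts only existence and not uniqueness, which is consistent with the fact that the positive/negative splitting $\br = \br^-{}^{-1}\br^+$ is itself non-unique; so no injectivity argument is needed there, only the reduction to~\ITEM1.
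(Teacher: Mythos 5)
Your proposal is correct and follows essentially the same route as the paper: existence via $(\br_1\wdots\br_\nn):=(1\wdots1)\act\br$ combined with Lemma~\ref{L:ActionMult}, uniqueness by showing via Lemma~\ref{L:RecSpec} that any special decomposition acts on $(1,1,\pdots)$ to reproduce its own entries and then invoking the well-definedness of the partial action (Lemma~\ref{L:Action}), and \ITEM2 by the positive-quotient decomposition. The appeal to freeness (Prop.~\ref{P:Free}) in your uniqueness step is not actually needed; the action argument you sketch afterwards is exactly what the paper uses.
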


\begin{proof} 
\ITEM1 Starting from~$\br$ in~$\BP\nn$, define $(\br_1 \wdots \br_\nn) := (1 \wdots 1) \act \br.$ As the input sequence $(1 \wdots 1)$ consists of special braids and $\Bisp$ is closed under~$\op$, all braids involved in the coloring of a positive diagram representing~$\br$ are special. So, in particular, the output colours $\br_1 \wdots \br_\nn$ are special. Next, we have $\shprod (1 \wdots 1) = 1$, so applying~\eqref{E:ShiftProd} directly gives~\eqref{E:Decomp1}. 

As for uniqueness, assume  $\br = \br'_1 \cdot \sh(\br'_2) \,\cdot \pdots \cdot\, \sh^{n-1}(\br'_\nn)$ with $\br'_1 \wdots \br'_\nn$ special. Then Lemma~\ref{L:RecSpec} implies $(1, 1, \pdots) \act \br'_\ii = (\br'_\ii, 1, 1, \pdots)$ for every~$\ii$. Using $\concat$ for concatenation of finite sequences, we deduce
\begin{align*}
(1, 1, \pdots) \act \br'_1 \cdot \sh(\br'_2) &= (\br'_1, 1, \pdots) \act \sh(\br'_2) = (\br'_1) \concat (1, 1, \pdots) \act \br'_2\\ 
&= (\br'_1) \concat (\br'_2, 1, 1, \pdots) = (\br'_1, \br'_2, 1, 1, \pdots),
\end{align*}
whence, repeating the argument,
$$(1, 1, 1, \pdots) \act \br'_1 \cdot \sh(\br'_2) \,\cdot \pdots \cdot\, \sh^{n-1}(\br'_\nn) = (\br'_1, \br'_2 \wdots \br'_\nn),$$
which is $(1, 1, 1, \pdots) \act \br = (\br'_1 \wdots \br'_\nn)$. It follows that $(\br'_1 \wdots \br'_\nn)$ must be the result of the action of~$\br$ on~$(1, 1, 1, \pdots)$, which is unique by Lemma~\ref{L:Action}.

\ITEM2 It is known that every braid in~$B_n$ can be expressed as a quotient $\br'{}\inv \cdot \br''$ with $\br',  \br''$ in~$\BP\nn$. The result then follows from applying~\ITEM1  to~$\br'$ and~$\br''$.
\end{proof} 

\begin{exam}\label{X:Decomp}
Consider $\br = \sig1^{-2}\sig2\sig1$, which is $\br'{}\inv\br''$ with $\br' =\nobreak \sigg12$ and $\br''  =\nobreak \sig2\sig1$. By~\eqref{E:Action1}, we find $(1,1,1) \act  \br' = (\sigg12\siginv2, \sig1, 1)$, and $(1,1,1) \act  \br''  = (\sig2\sig1, 1,1)$. We deduce the expression
$$\br = \sh^2(1)\inv \cdot \sh(\sig1)\inv \cdot (\sigg12\siginv2)\inv \cdot (\sig2 \sig1) \cdot \sh(1)
\cdot \sh^2(1),$$
or, using the trivial braid~$1$ and the operations~$\op$, $\inv$ and~$\sh$
exclusively,
\begin{equation}
\label{E:DecompX}
\br = \sh^2(1)\inv \cdot \sh(1\op1)\inv \cdot
((1\op1)\op1)\inv \cdot (1\op(1\op1)) \cdot \sh(1)
\cdot \sh^2(1),
\end{equation}
that is, when trivial terms are removed, $\sh(1^{[2]})\inv
\cdot (1_{[3]})\inv \cdot 1^{[3]}$.
\end{exam}

\begin{rema}\label{R:NonSym}
For $\br$ in~$\BP\nn$, Prop.~Ê\ref{P:Decomp} gives $\br = \br_1 \, \sh(\br_2) \pdots \sh^{\nn -1}(\br_\nn)$ with $\br_1 \wdots \br_\nn$ special. However, the braids $\br_\ii$ need not lie in~$\BR\nn$: for instance, the decomposition of~$\sigg12$, a braid of~$\BR2$, is $(\sigg12\siginv2) \, \sh(\sig1)$, with $\sigg12\siginv2 \notin \BR2$.
\end{rema}

The previous results imply that for a braid to be special is a decidable property.

\begin{prop}[\bf decidability]
There is an algorithm that decides whether a given braid word represents a special braid, and, if so, returns an expression of this braid in terms of~$1$ and~$\op$.
\end{prop}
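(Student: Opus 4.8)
The plan is to build the decision algorithm directly on the partial action of braids on sequences of special braids, exactly as encapsulated in Lemma~\ref{L:RecSpec}. Given an input braid word~$\ww$, the strategy is to compute the sequence $(1, 1, 1, \pdots) \act \ww$ whenever it is defined, and then to check whether the result has the form $(\br, 1, 1, \pdots)$. By Lemma~\ref{L:RecSpec}, the braid represented by~$\ww$ is special if, and only if, this computation succeeds and returns such a sequence, so correctness of the algorithm is immediate once we can actually run the computation effectively.

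First I would explain how to carry out the action concretely on sequences whose entries are \emph{stored} as expressions in~$1$ and~$\op$, not merely as braids. The key observation is that the colouring rules \eqref{E:Action1}--\eqref{E:Action2} manipulate the entries only through~$\op$ and its partial inverse~$\opb$: applying a positive letter~$\sig\ii$ replaces the pair $(\aaa\ii, \aaa{\ii+1})$ by $(\aaa\ii \op \aaa{\ii+1}, \aaa\ii)$, which keeps every entry inside~$\Bisp$ and records its $\op$-expression, while applying a negative letter~$\siginv\ii$ requires computing $\aaa{\ii+1} \opb \aaa\ii$. Here the closure Lemma~\ref{L:SpecDiv} guarantees that, \emph{when the division exists}, the quotient is again special, so the entries never leave~$\Bisp$ and remain expressible in~$1$ and~$\op$. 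Thus, starting from the all-$1$ sequence and reading~$\ww$ letter by letter, I would maintain a finite tuple of special-braid terms, updating it at each step.

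The main obstacle, and the only genuinely algorithmic point, is to decide effectively whether a required partial operation~$\aaa{\ii+1} \opb \aaa\ii$ is defined and, if so, to compute it. By Prop.~\ref{P:LeftTr}\,\ITEM2, $\aaa{\ii+1} \pref \aaa\ii$ holds if, and only if, $\aaa{\ii+1} \op \aaa\ii = \aaa{\ii+1}^{[2]} \op \aaa\ii$ holds in~$\Bi$; since both sides are explicit braid words built from the current terms, this is an instance of the word problem in~$\Bi$, which is decidable. When the test succeeds, the witnessing quotient~$\br'$ with $\aaa{\ii+1} \op \br' = \aaa\ii$ is recovered as a special-braid expression via the explicit normal form for free shelves invoked in Lemma~\ref{L:SpecDiv}; when it fails, the action is undefined and the algorithm reports that~$\ww$ (read with the prefix processed so far) admits no valid colouring and hence, by Lemma~\ref{L:RecSpec}, that the braid is not special. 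Running this to the end of~$\ww$ either aborts---certifying non-specialness---or yields a sequence; comparing it with $(\br, 1, 1, \pdots)$ is a finite sequence of word-problem tests, and in the positive case the stored term in the first entry is the desired expression in~$1$ and~$\op$.

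Finally I would note that termination is clear since~$\ww$ is finite and each letter triggers finitely many decidable word-problem instances, so the whole procedure halts. The heart of the matter is therefore the reduction of the partiality of~$\opb$ to a decidable criterion, namely the image characterization of Prop.~\ref{P:LeftTr}\,\ITEM2 combined with the effectivity of division in free shelves; everything else is bookkeeping of $\op$-expressions along the colouring of a diagram for~$\ww$.
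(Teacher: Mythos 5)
Your overall architecture is sound, and the way you handle each individual division step---testing $\aaa{\ii+1} \pref \aaa\ii$ via Prop.~\ref{P:LeftTr}\,\ITEM2 and the braid word problem, then invoking Lemma~\ref{L:SpecDiv} and the effective division algorithm in free shelves to recover the quotient as an $\op$-expression---is correct and close in spirit to what the paper does. But there is a genuine gap in the negative branch of your algorithm. The action of Lemma~\ref{L:Action} is a partial action \emph{of braid words}: $(1,1,1,\pdots)\act\ww$ may be undefined for the particular word~$\ww$ you were handed even though $(1,1,1,\pdots)\act\ww'$ is defined for some other word~$\ww'$ representing the same braid, and Lemma~\ref{L:RecSpec} refers to the action of the \emph{braid}, which exists as soon as at least one representative word admits the colouring. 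So when your letter-by-letter computation aborts on a failed division, you may not conclude that the braid is not special. A concrete failure: on input $\ww = \siginv1\sig1$, the very first letter requires dividing $1$ by $1$ on the left, i.e.\ finding $\bb$ with $1 \op \bb = \sh(\bb)\,\sig1 = 1$, which is impossible; your algorithm reports ``not special,'' yet $\ww$ represents the unit braid, which is special.

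This is exactly why the paper's proof begins by reversing~$\ww$ into an equivalent word $\uu\vv\inv$ with $\uu,\vv$ positive and with no common right divisor in~$\BPi$: the cited result from~\cite{Dfb} guarantees that if $\vec\aa\act\ww'$ is defined for \emph{any} word~$\ww'$ representing the braid, then $\vec\aa\act\uu\vv\inv$ is defined for this canonical representative. Running your colouring procedure on $\uu\vv\inv$ instead of on the raw input~$\ww$ repairs the argument: an abort then genuinely certifies that the braid's action on $(1,1,1,\pdots)$ does not exist, hence by Lemma~\ref{L:RecSpec} that the braid is not special. Everything else in your write-up (the bookkeeping of $\op$-expressions, the final comparison with $(\br,1,1,\pdots)$ via the word problem, termination) goes through unchanged once this normalization step is inserted.
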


\begin{prproof}
Let $\ww$ be a braid word. We can decide whether $\ww$ represents a special braid as follows. First, we reverse~$\ww$ into an equivalent braid word~$\uu\vv\inv$ with $\uu, \vv$ positive using the reversing method of~\cite{Dia}. Next, we compute $(1, 1, 1, \pdots) \act \uu\vv\inv$. By~\cite{Dfb}, it is known that, if $\vec\aa \act \ww$ is defined, then so is $\vec\aa \act \uu\vv\inv$ when $\uu$ and~$\vv$ are obtained as above, namely so as to ensure that the elements of~$\BPi$ representing~$\uu$ and~$\vv$ have no common right divisor. By Lemma~\ref{L:RecSpec}, $\ww$ represents a special braid if, and only if,  the computation is successful and it leads to a sequence of the form $(\br, 1, 1, \pdots)$, that is, all components from the second are trivial. The latter point can be tested using any algorithm for the word problem of braids. Moreover, there exists an effective left division algorithm in free monogenerated shelves~\cite{Dfd}. Hence, we can effectively obtain an expression of the special braids involved in~$(1, 1, 1, \pdots) \act \uu \vv\inv$ in terms of~$1$ and~$\op$.
\end{prproof}

\begin{exam}
Let $\ww:= \siginv2 \siginv1 \sigg22 \sig1$. Reversing~$\ww$ yields the equivalent positive--negative word~$\sigg12 \siginv2$, so, here, $\uu$ is $\sigg12$, and $\vv$ is~$\sig2$. Then we compute the value of $(1, 1, 1) \act \sigg12$, namely $(\sigg12 \siginv2, \sig1, 1)$, that is, $(1_{[3]}, 1^{[2]}, 1)$. Then, in order to apply~$\vv\inv$, we have to left divide $1^{[2]}$ by~$1$. In the present case, the result is obvious: division is possible and the quotient is~$1$. So we obtain $(1, 1, 1) \act \sigg12\siginv2 = (1_{[3]}, 1, 1)$, and we conclude that $\ww$ represents the special braid~$1_{[3]}$.
\end{exam}

We conclude with a characterisation first of braids that are positive and special, and of braids whose special decomposition only contains positive braids. 

\begin{lemm}[\bf positive special]\label{L:PosSpec}
For every~$\mm$, there is a unique special positive braid of length~$\mm$, namely~$1^{[\mm + 1]}$.
\end{lemm}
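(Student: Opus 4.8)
The plan is to prove existence and uniqueness separately, the former being immediate and the latter going by induction on the length~$\mm$. For existence, recall that $1^{[\mm+1]} = \sig\mm \pdots \sig1$ was already identified above as a special braid; it is visibly positive of length~$\mm$. So the whole content is uniqueness: any positive special braid of length~$\mm$ must equal~$1^{[\mm+1]}$.

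The one computational input I would isolate first is the behaviour of the exponent-sum homomorphism $\ee\colon\Bi\to\ZZZZ$ sending each~$\sig\ii$ to~$1$. Expanding~\eqref{E:BraidOp} and using $\ee\comp\sh = \ee$ gives $\ee(\aa\op\bb) = \ee(\bb)+1$ for all~$\aa,\bb$. Consequently every special braid has nonnegative exponent sum (induction on its construction from~$1$), and therefore $\ee(\aa\op\bb)\ge 1$ whenever $\bb$ is special. This tiny inequality is what forces rigidity.

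Now let $\br$ be a positive special braid of length~$\mm$ and induct on~$\mm$, the case $\mm = 0$ being clear since then $\br = 1 = 1^{[1]}$. For $\mm\ge 1$, fix a positive word $\sig{i_1}\pdots\sig{i_\mm}$ representing~$\br$. By Lemma~\ref{L:RecSpec} the colouring of~$(1,1,1,\pdots)$ along this word is defined and yields~$(\br,1,1,\pdots)$, and by Lemma~\ref{L:ActionPos} together with closure of~$\Bisp$ under~$\op$ every colour occurring is special. Inspect the \emph{last} crossing~$\sig{i_\mm}$: by~\eqref{E:Action1} it overwrites the colour in position~$i_\mm$ with $\cc\op\cc'$, where $\cc,\cc'$ are the (special) colours sitting in positions~$i_\mm, i_\mm+1$ just before. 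If $i_\mm\ge 2$, this new colour must be the trivial braid, since all entries of index~$\ge 2$ of the output~$(\br,1,1,\pdots)$ are~$1$; but $\ee(\cc\op\cc')\ge 1$, a contradiction. Hence $i_\mm = 1$. Writing $\vec\cc = (\cc_1,\cc_2,\pdots)$ for the sequence reached just before this last crossing, applying~$\sig1$ and comparing with~$(\br,1,1,\pdots)$ forces $\cc_1 = 1$, $\cc_\jj = 1$ for $\jj\ge 3$, and $\cc_1\op\cc_2 = \br$, that is, $\br = 1\op\cc_2 = \sh(\cc_2)\,\sig1$ with $\cc_2$ special. Since the last letter of the word is~$\sig1$, the braid $\br\,\siginv1 = \sig{i_1}\pdots\sig{i_{\mm-1}} = \sh(\cc_2)$ is positive of length~$\mm-1$, so $\cc_2$ is a positive special braid of length~$\mm-1$. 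The induction hypothesis gives $\cc_2 = 1^{[\mm]}$, whence $\br = 1\op 1^{[\mm]} = 1^{[\mm+1]}$, closing the induction.

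The crux—and the only place where speciality is genuinely used—is the forced value $i_\mm = 1$ of the last letter: this is what converts the mere positivity of~$\br$ into the rigid descending shape of~$1^{[\mm+1]}$, and it rests entirely on the inequality $\ee(\cc\op\cc')\ge 1$ combined with the fact (Lemma~\ref{L:RecSpec}) that a special braid reproduces itself in the first position only. Once that is secured, checking that the peeled-off colour~$\cc_2$ is again positive, special, and one shorter is routine, so the induction runs without further difficulty.
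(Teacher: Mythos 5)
Your argument is correct and follows essentially the same route as the paper's proof: induction on the length, using the colouring characterization of Lemma~\ref{L:RecSpec} to force the last letter of a positive expression to be~$\sig1$, then peeling it off to reduce to a positive special braid of length~$\mm-1$. The only difference is that you justify the key non‑triviality $\cc\op\cc'\ne 1$ explicitly via the exponent‑sum homomorphism (the paper asserts it without comment), which is a welcome but minor addition.
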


\begin{proof}
We saw above the equality $1^{[\mm + 1]} = \sig\mm \pdots \sig2\sig1$, a positive braid of length~$\mm$. Conversely, assume that $\br$ is positive and special of length~$\mm$. We use induction on~$\mm$. For $\mm = 0$, the only possibility is $\br = 1 = 1^{[1]}$. Assume $\mm \ge 1$, and write $\br = \br' \sig\ii$. By Lemma~\ref{L:RecSpec}, the assumption that $\br$ is special implies the equality $(1, 1, 1, \nobreak \pdots) \act \br = (\br, 1, 1, \dots)$. Since $\br'$ is positive, $(1, 1, \pdots) \act \br'$ exists. Call it $(\br'_1, \br'_2, \pdots)$. If we had $\ii \ge 2$, the $\ii$th entry in~$(1, 1, \pdots) \act \br' \sig\ii$ would be~$\br'_\ii \op \br'_{\ii+1}$, which cannot be~$1$. Thus we must have $\ii = 1$, whence $\br = \br'_1 \op \br'_2$, $\br'_1 = \br'_3 = \pdots = 1$, that is, $(1, 1, \pdots) \act \br' = (1, \br'_2, 1, 1, \pdots)$. By~\eqref{E:Decomp1}, we deduce $\br' = \sh(\br'_2)$. Hence $\br'_2$ is positive and special. The induction hypothesis implies $\br'_2 = 1^{[\mm]}$, and we deduce $\br = 1 \op 1^{[\mm]} = 1^{[\mm + 1]}$.
\end{proof}

\begin{prop}[\bf positive special decomposition]
A braid~$\br$ admits a special decomposition consisting of positive braids if, and only if, $\br$ is a positive simple braid, that is, there exists an integer~$\nn$ such that $\br$ divides Garside's fundamental braid~$\Delta_\nn$ in the monoid~$\BP\infty$.
\end{prop}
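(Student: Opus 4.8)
The plan is to combine the description of the positive special braids with the uniqueness of special decompositions. By Lemma~\ref{L:PosSpec}, the positive special braids are exactly the right powers $1^{[\mm+1]} = \sig\mm \pdots \sig2\sig1$, for $\mm \ge 0$. Hence a braid $\br$ of~$\BP\nn$ has a special decomposition consisting of positive braids if, and only if, it can be written $\br = \prod_{\ii=1}^{\nn}\sh^{\ii-1}(1^{[\mm_\ii+1]})$ for some integers $\mm_\ii \ge 0$: indeed, as each factor $1^{[\mm_\ii+1]}$ is special, the uniqueness part of Prop.~\ref{P:Decomp} forces such an expression to be \emph{the} special decomposition of~$\br$. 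Writing $\sh^{\ii-1}(1^{[\mm_\ii+1]}) = \sig{\mm_\ii+\ii-1}\pdots\sig\ii$ (an empty product when $\mm_\ii = 0$) and using that positive braids admit no cancellation, I would note that the largest generator index occurring in~$\br$ forces $\mm_\ii \le \nn - \ii$ for every~$\ii$ (so the last term is trivial). The question thus reduces to identifying the braids
$$\br = \prod_{\ii=1}^{\nn-1}(\sig{\mm_\ii+\ii-1}\pdots\sig\ii), \qquad 0 \le \mm_\ii \le \nn - \ii,$$
a product of descending runs, the $\ii$th of which has lowest letter~$\sig\ii$.

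The main point, and the delicate step, is to show that every such product is a \emph{simple} braid, that is, a left divisor of~$\Delta_\nn$. For this I would read the product dynamically on the $\nn$ strands. The factor $\sig{\mm_\ii+\ii-1}\pdots\sig\ii$ is the braid that takes the strand currently at position $\mm_\ii+\ii$ and pulls it down to position~$\ii$, shifting the strands then occupying positions $\ii \wdots \mm_\ii+\ii-1$ one step up, crossing each of them exactly once, and leaving all other strands untouched. As every subsequent factor only involves generators $\sig\jj$ with $\jj \ge \ii+1$, position~$\ii$ is frozen from step~$\ii$ on, so the strand pulled down at step~$\ii$ is precisely the strand ending at position~$\ii$. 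The key observation is then that no two strands cross twice: for two strands $s, s'$ ending at positions $\ii < \ii'$, the strand~$s$ is frozen after step~$\ii$, so any crossing between them occurs at some step $\kk \le \ii$; but at a step $\kk < \ii$ all crossings involve the single strand pulled down at that step (the strands merely shifted up keep their mutual order), and that strand is neither~$s$ nor~$s'$, while at step~$\ii$ they can cross at most once. Consequently the displayed word is reduced, its length $\sum_\ii \mm_\ii$ equals the number of inversions of the underlying permutation $\perm(\br)$, and $\br$ is simple.

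Finally I would close by a cardinality count. The assignment $(\mm_1 \wdots \mm_{\nn-1}) \mapsto \br$ is injective, since distinct tuples yield the distinct special decompositions $(1^{[\mm_1+1]} \wdots 1^{[\mm_{\nn-1}+1]}, 1)$, hence distinct braids by the uniqueness in Prop.~\ref{P:Decomp}; and its image consists of simple braids by the previous paragraph. Since the index set has $\prod_{\ii=1}^{\nn-1}(\nn-\ii+1) = \nn!$ elements, which is exactly the number of simple braids of~$\BP\nn$ (these being in bijection with~$\Sym\nn$ via~$\perm$), the map is a bijection onto the simple braids of~$\BP\nn$. Taking the union over~$\nn$ yields both implications simultaneously: the braids admitting a positive special decomposition are precisely the positive simple braids. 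In this scheme the only nonroutine ingredient is the ``no double crossing'' argument of the second paragraph; the rest is bookkeeping together with the standard identification of the $\nn!$ simple braids with the $\nn!$ permutations.
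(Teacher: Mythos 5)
Your proof is correct, and its skeleton is the same as the paper's: both rest on Lemma~\ref{L:PosSpec} together with the uniqueness part of Prop.~\ref{P:Decomp}, which jointly identify the braids admitting a positive special decomposition with the products of descending runs $\prod_{\ii}\sh^{\ii-1}(1^{[\mm_\ii+1]})$. The two arguments diverge in how these products are matched with the divisors of~$\Delta_\nn$. For the forward direction the paper merely asserts that in such a product any two strands cross at most once; you actually prove it, via the observation that every crossing of the $\ii$th factor involves the one strand being pulled down to its final (and subsequently frozen) position~$\ii$ --- a worthwhile supplement. For the converse the paper invokes the known canonical decomposition of a permutation braid into descending runs (with $\mm_\ii+1$ the initial position of the strand ending at position~$\ii$) and identifies it with the special decomposition by uniqueness; you bypass that normal form and instead conclude by a pigeonhole count, matching the $\nn!$ tuples $(\mm_1\wdots\mm_{\nn-1})$ with $0\le\mm_\ii\le\nn-\ii$ injectively against the $\nn!$ left divisors of~$\Delta_\nn$. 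Each route leans on one standard Garside-theoretic fact (the explicit normal form in the paper's case, the count of simple braids in yours); yours is less explicit, in that it does not exhibit which tuple a given simple braid corresponds to, but it is self-contained modulo that count, and since the injectivity step is exactly the uniqueness of special decompositions, nothing is circular.
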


\begin{proof}
Assume $\br = \shprod(\br_1 \wdots \br_\nn)$ with $\br_1 \wdots \br_\nn$ special and positive. Then, as seen in the proof of Prop.~\ref{P:Decomp}, $(1, 1, 1, \pdots) \act \br$ is defined and equal to~$(\br_1, \br_2, \pdots)$. Hence, by Lemma~\ref{L:PosSpec}, there exists for each~$\ii$ a number~$\mm_\ii$ satisfying $\br_\ii = 1^{[\mm _\ii + 1]}$. Then \eqref{E:Decomp1} expands into
\begin{equation}\label{E:DecompSimple}
\br = (\sig{\mm_1} \pdots \sig1) \cdot \sh(\sig{\mm_2} \pdots \sig1) \,\cdot \pdots
\end{equation}
This shows that $\br$ is a positive braid, in which any two strands cross at most once, thus a divisor of~$\Delta_n$ for $n$ large enough.

Conversely, assume that $\br$ is a positive simple braid. Then it is known that $\br$ admits a decomposition of the form~\eqref{E:DecompSimple}, where $\mm_\ii + 1$ is the initial position of the strand that finishes at position~$\ii$ in~$\br$. By uniqueness, this decomposition coincides with the one associated with~$(1, 1, 1, \pdots) \act \br$, meaning that the entries of~$(1, 1, 1, \pdots) \act \br$ are the positive braids~$1^{[\mm_\ii + 1]}$.
\end{proof}

In particular, the special decomposition of~$\Delta_n$ is
$$\Delta_n = (\sig{n-1} \pdots \sig1) \cdot \sh(\sig{n-2} \pdots \sig1) \ \cdot \pdots \cdot\ \sh^{n-2}(\sig1).$$ 

In contrast to Lemma~\ref{L:PosSpec}, which completely describes special braids lying in~$\BP\nn$, very little is known about special braids that lie in~$\BR\nn$. Inductively defining the \emph{complexity} of a special braid~$\br$ by $\compl1 := 0$ and
$$\compl\br := \min\{\sup(\compl{\br_1}, \compl{\br_2}) + 1 \mid \br = \br_1 \op \br_2\},$$
one easily checks that $\compl\br \le \nn$ implies~$\br \in \BR\nn$.
\begin{ques}[\bf special $\nn$-strand braids]
{\it Is the converse true, that is, does $\compl\br \le \nn$ hold for every special braid~$\br$ that lies in~$\BR\nn$?}
\end{ques}

A positive answer would in particular imply that there are at most~$2^\nn$ special braids in~$\BR\nn$.

\subsection{Ordering braids}\label{SS:Order}

Together with the acyclicity result of Lemma~\ref{L:Acyclic}, the comparison property of Lemma~\ref{L:Comp} implies that the iterated division relation~$\prefs$ is a linear ordering on the family of all special braids~$\Bisp$. As every braid admits a decomposition in terms of special braids, it is not surprising that the linear order on~$\Bisp$ extends to a linear order on arbitrary braids. We briefly recall the construction. The interest here is not to establish the orderability of~$\Bi$, now a standard result~\cite{Dhr}, but to explain the connection with the shelf operation~$\op$ and state the Laver conjecture.

We recall from Lemma~\ref{L:DivPos} that a braid~$\br$ is said to be \emph{$\sig1$-positive} if it admits at least one expression where~$\sig1$ occurs and~$\siginv1$ does not. We define \emph{$\sig1$-negative}  (no~$\sig1$ and at least one~$\siginv1$) and \emph{$\sig1$-free} (no~$\sig1$ and no~$\siginv1$) accordingly.

\begin{lemm}[\bf order on special]\label{L:SpecOrd}
If $\br, \br'$ are special braids, the relation $\br \prefs \br'$ in~$\Bisp$ is equivalent to $\br\inv \br'$ being $\sig1$-positive.
\end{lemm}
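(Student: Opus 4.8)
The plan is to prove the equivalence in two directions, leveraging the fact that $\Bisp$ is a free monogenerated left-shelf (Prop.~\ref{P:Free}), so that $\prefs$ is a linear order on it. The forward direction is essentially already available: by Lemma~\ref{L:DivPos}, $\br \prefs \br'$ in~$\Bi$ implies that $\br\inv\br'$ is $\sig1$-positive, and since $\Bisp$ is a subshelf of~$\Bi$, the relation $\br \prefs \br'$ in~$\Bisp$ certainly implies $\br \prefs \br'$ in~$\Bi$. Hence the nontrivial content lies entirely in the converse: I must show that if $\br,\br'$ are special and $\br\inv\br'$ is $\sig1$-positive, then $\br \prefs \br'$ holds \emph{in~$\Bisp$}.

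For the converse I would argue by \emph{trichotomy} rather than directly. Since $(\Bisp,\op)$ is a free monogenerated left-shelf, the comparison property (Lemma~\ref{L:Comp}) together with acyclicity (Lemma~\ref{L:Acyclic}) tells us that exactly one of $\br \prefs \br'$, $\br = \br'$, $\br' \prefs \br$ holds. I would then show that the other two cases are incompatible with $\br\inv\br'$ being $\sig1$-positive, so that $\br \prefs \br'$ is forced. If $\br = \br'$, then $\br\inv\br' = 1$, which is $\sig1$-free and in particular not $\sig1$-positive (a $\sig1$-positive braid is nontrivial by the acyclicity argument underlying Lemma~\ref{L:Acyclic}). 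If instead $\br' \prefs \br$ held, then by the already-established forward direction $\br'^{-1}\br = (\br\inv\br')\inv$ would be $\sig1$-positive; but $\br\inv\br'$ is assumed $\sig1$-positive, and a braid and its inverse cannot both be $\sig1$-positive, since that would force $\sig1$-positive and $\sig1$-negative to overlap, contradicting the fundamental trichotomy for the braid order (no nontrivial braid is simultaneously $\sig1$-positive and $\sig1$-negative).

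The one genuine obstacle is justifying the mutual exclusivity of $\sig1$-positive and $\sig1$-negative, i.e.\ that $\br\inv\br'$ and its inverse cannot both be $\sig1$-positive. This is precisely the statement that the set of $\sig1$-positive braids is closed under neither inversion nor contains~$1$, which is the core of the acyclicity result; I would invoke the Larue-style Artin-representation argument already cited in the sketch of Lemma~\ref{L:Acyclic}. Concretely, $\rho(\brr)$ sends~$\xx_1$ to a reduced word ending in~$\xx_1\inv$ whenever $\brr$ is $\sig1$-positive, and the analogous statement with~$\xx_1$ at the end holds for $\sig1$-negative braids; these two endings are incompatible, so no braid can be both. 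Granting this, the trichotomy collapses cleanly to the desired equivalence, and the proof is complete.
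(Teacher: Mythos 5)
Your proposal is correct and follows essentially the same route as the paper: the forward direction via Lemma~\ref{L:DivPos}, and the converse by trichotomy from the comparison property of Lemma~\ref{L:Comp}, eliminating the cases $\br = \br'$ and $\br' \prefs \br$ using acyclicity. The only difference is in the last step: where you rule out the third case by arguing (via a second Artin-representation computation, for $\sig1$-negative braids) that a braid and its inverse cannot both be $\sig1$-positive, the paper simply observes that the product of the two $\sig1$-positive braids $\br\inv \br'$ and $\br'{}\inv \br$ would give a $\sig1$-positive expression of the trivial braid, contradicting Lemma~\ref{L:Acyclic} directly --- a slightly more economical closing step that, like your Larue-style argument, avoids any circular appeal to Prop.~\ref{P:Order}.
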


\begin{proof}
We saw in Lemma~\ref{L:DivPos} that, if $\br \prefs \br'$ holds in~$\Bi$, hence in particular if $\br \prefs \br'$ holds in~$\Bisp$, then $\br\inv \br'$ is $\sig1$-positive. Conversely, assume that $\br$ and~$\br'$ are special and $\br\inv \br'$ is $\sig1$-positive. By Lemma~\ref{L:Comp}, at least one of $\br \prefs \br'$, $\br =\nobreak \br'$, or $\br \mults \br'$ holds in~$\Bisp$. The second option implies that $\br\inv \br'$ is trivial, hence, by Lemma~\ref{L:Acyclic}, not $\sig1$-positive. The third option implies that $\br'{}\inv \br$ is $\sig1$-positive, which again contradicts the $\sig1$-positivity of~$\br\inv \br'$, as the product of the $\sig1$-positive braids~$\br\inv \br'$ and~$\br'{}\inv \br$, which is~$1$, would be $\sig1$-positive. So $\br \prefs \br'$ is the only possibility.
\end{proof}

Using the special decomposition of Prop.~\ref{P:Decomp}, we immediately deduce:

\begin{prop}[\bf order]\label{P:Order}
Every braid is $\sig1$-positive, $\sig1$-negative, or $\sig1$-free, each possibility excluding the others.
\end{prop}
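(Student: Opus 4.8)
The plan is to establish the two independent halves of the statement—exhaustiveness (every braid lies in at least one of the three classes) and mutual exclusivity (it lies in at most one)—by different means, using the special decomposition of Prop.~\ref{P:Decomp} for the former and the acyclicity of division (Lemma~\ref{L:Acyclic}) for the latter.

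For exhaustiveness, given a braid~$\br$, which lies in some~$\BR\nn$, I would invoke Prop.~\ref{P:Decomp}\ITEM2 to write
$$\br = \underbrace{\sh^{\nn-1}(\br_\nn\inv) \pdots \sh(\br_2\inv)}_{u} \cdot \br_1\inv \cdot \br'_1 \cdot \underbrace{\sh(\br'_2) \pdots \sh^{\nn-1}(\br'_\nn)}_{u'},$$
with all $\br_\ii, \br'_\ii$ special. The key point is that both flanking words~$u$ and~$u'$ lie in~$\sh(\Bi)$, hence are $\sig1$-free, so that every occurrence of $\sig1^{\pm1}$ in~$\br$ is concentrated in the central factor $\br_1\inv \br'_1$, built from the special braids $\br_1, \br'_1$. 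Applying the comparison property (Lemma~\ref{L:Comp}) to the monogenerated shelf~$\Bisp$ then yields one of $\br_1 \prefs \br'_1$, $\br_1 = \br'_1$, $\br'_1 \prefs \br_1$, and Lemma~\ref{L:SpecOrd} translates these respectively into $\br_1\inv\br'_1$ being $\sig1$-positive, trivial (hence $\sig1$-free), or $\sig1$-negative. Since pre- and post-multiplying by the $\sig1$-free words $u$ and~$u'$ introduces no new letter $\sig1^{\pm1}$, the relevant property passes from $\br_1\inv\br'_1$ to~$\br$, proving that $\br$ falls into one of the three classes.

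For mutual exclusivity, I would extract from Lemma~\ref{L:Acyclic} (through Lemma~\ref{L:DivPos}) the single fact that a $\sig1$-positive braid is never trivial, together with the elementary remark that juxtaposing a $\sig1$-positive expression with any expression containing no~$\siginv1$ again yields a $\sig1$-positive expression. Observing that $\br$ is $\sig1$-negative exactly when $\br\inv$ is $\sig1$-positive, and $\sig1$-free exactly when $\br\inv$ is, it suffices to forbid the coincidences ``$\sig1$-positive and $\sig1$-negative'' and ``$\sig1$-positive and $\sig1$-free''. In both, $\br$ is $\sig1$-positive while $\br\inv$ admits an expression free of~$\siginv1$ (being $\sig1$-positive or $\sig1$-free), so the product $\br\cdot\br\inv = 1$ would be $\sig1$-positive, contradicting that it is trivial. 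The remaining pair reduces to the second one by passing to~$\br\inv$.

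Since the substantial results—acyclicity, the comparison property, and the special decomposition—are imported as black boxes, I expect no genuine obstacle; the only steps demanding care are verifying that the shifted factors are $\sig1$-free and that multiplication by $\sig1$-free words preserves each of the three properties, both of which are immediate from the definitions.
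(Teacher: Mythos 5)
Your proposal is correct and follows essentially the same route as the paper: both use the special decomposition of Prop.~\ref{P:Decomp}\ITEM2 to isolate the central factor $\br_1\inv\br'_1$ between $\sig1$-free factors lying in~$\sh(\Bi)$, apply Lemma~\ref{L:SpecOrd} (with the comparison property, which you make explicit and the paper leaves implicit) to get the trichotomy on that factor, and derive mutual exclusivity from the acyclicity result of Lemma~\ref{L:Acyclic}, i.e.\ from the fact that a $\sig1$-positive braid is never trivial. Your write-up merely spells out details that the paper compresses into one line each.
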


\begin{proof}
Let $\br \in \BR\nn$. By Prop.~\ref{P:Decomp}, there exist~$\br_1 \wdots \br_\nn, \br'_1 \wdots \br'_\nn$  special satisfying
\begin{equation*}
\br = \sh^{n-1}(\br_n\inv) \, \cdot \pdots \cdot\,  \sh(\br_2\inv) \cdot \br_1\inv \cdot \br'_1 \cdot \sh(\br'_2) \,\cdot \pdots \cdot\, \sh^{n-1}(\br'_n),
\end{equation*}
which has the form $\br = \sh(\brr) \cdot \br_1\inv\br'_1 \cdot \sh(\brr')$. By Lemma~\ref{L:SpecOrd}, $\br_1\inv\br'_1$ is either $\sig1$-positive, in which case so is~$\br$, or equal to~ $1$, in which case $\br$ is $\sig1$-free, or $\sig1$-negative, in which case so is~$\br$. The three cases exclude one another by Lemma~\ref{L:Acyclic}.
\end{proof}

\begin{rema}
The previous simple argument takes place in~$\Bi$, and does not guarantee that a braid of~$\BR\nn$ necessarily admits a $\sig1$-positive expression by an $\nn$-strand braid word. The latter property is true, but it requires a further proof~\cite{Dfo}.
\end{rema}

From that point, it is straightforward to define an order on~$\Bi$. 

\begin{coro}[\bf order]\label{C:Order}
Say that a braid is $\sig\ii$-positive if it is the image of a $\sig1$-positive braid under~$\sh^{\ii - 1}$. For~$\br_1, \br_2$ in~$\Bi$, define $\br_1 < \br_2$ to mean that $\br_1\inv \br_2$ is $\sig\ii$-positive for some~$\ii$. Then the relation~$<$ is a linear order on~$\Bi$; it is compatible with multiplication on the left, and extends the order~$\prefs$ on~$\Bisp$.
\end{coro}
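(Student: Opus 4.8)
The plan is to verify the four asserted properties of the relation $<$ in turn, leaning on Proposition~\ref{P:Order} as the key input. First I would establish that $<$ is a strict linear order. For irreflexivity, note that $\br_1 < \br_1$ would require $1 = \br_1\inv\br_1$ to be $\sig\ii$-positive for some~$\ii$; but a $\sig\ii$-positive braid is the $\sh^{\ii-1}$-image of a $\sig1$-positive braid, and $\sh$ is injective, so this reduces to the statement that $1$ is not $\sig1$-positive, which follows from Lemma~\ref{L:Acyclic}. For trichotomy and totality, I would apply Proposition~\ref{P:Order} not directly to $\br_1\inv\br_2$, but to the braids $\sh^{-(\ii-1)}$ of the relevant factors; more carefully, since every braid is $\sig1$-positive, $\sig1$-negative, or $\sig1$-free with the three mutually exclusive, I iterate: given $\br_1\inv\br_2$, if it is $\sig1$-positive then $\br_1<\br_2$; if $\sig1$-negative then $\br_2<\br_1$; and if $\sig1$-free, it lies in $\sh(\Bi)$, so I peel off one shift and recurse on $\sh^{-1}(\br_1\inv\br_2)$, which applies to the $\sig2$-level, and so on.

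For transitivity and left-compatibility, the main algebraic fact I need is that the set of $\sig\ii$-positive braids is closed under multiplication, and more generally that a product of a $\sig\ii$-positive and a $\sig\jj$-positive braid is $\sig{\min(\ii,\jj)}$-positive. This is essentially the same kind of argument used inside the proof of Lemma~\ref{L:SpecOrd}: a $\sig\ii$-positive expression places $\sig\ii$ with no $\siginv\ii$ and no lower-index letters, so concatenating two such expressions (at possibly different levels) still exhibits the lowest-index generator positively and without its inverse. Granting this, transitivity of $<$ follows since $\br_1\inv\br_2$ and $\br_2\inv\br_3$ being $\sig\ii$- and $\sig\jj$-positive makes their product $\br_1\inv\br_3$ positive at level $\min(\ii,\jj)$. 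Left-compatibility is immediate from the definition, as $(\gg\br_1)\inv(\gg\br_2) = \br_1\inv\br_2$, so the defining quotient is unchanged under left multiplication by any~$\gg$.

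Finally, to see that $<$ extends $\prefs$ on~$\Bisp$, I invoke Lemma~\ref{L:SpecOrd}: for special $\br,\br'$ the relation $\br\prefs\br'$ is equivalent to $\br\inv\br'$ being $\sig1$-positive, which is exactly the $\ii=1$ instance of the definition of $\br<\br'$. Hence $\br\prefs\br'$ implies $\br<\br'$, so $<$ restricts to an extension of the division order on special braids.

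The step I expect to be the main obstacle is the closure of $\sig\ii$-positivity under multiplication across different levels, needed for transitivity. The subtlety is that a priori a product of two positive-at-different-levels braids might admit cancellations that destroy the witnessing generator; making this rigorous really requires the acyclicity input (no $\sig1$-positive braid is trivial, Lemma~\ref{L:Acyclic}) together with its shifted versions, rather than a purely syntactic manipulation of words. I would handle it by reducing to the single-level statement---that $\sig1$-positive times $\sig1$-positive is $\sig1$-positive, and that a $\sig1$-positive braid is never $\sig1$-free---and then propagating through $\sh$; the cross-level case reduces to the observation that a $\sig\jj$-positive braid with $\jj>\ii$ is $\sig\ii$-free, so it cannot interfere with the $\sig\ii$-witness of the lower factor.
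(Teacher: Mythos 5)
The paper states this corollary without proof, treating it as a routine consequence of Proposition~\ref{P:Order} and Lemma~\ref{L:SpecOrd}, and your argument is exactly that intended derivation; it is correct. One clarification on the step you single out as the main obstacle: closure of $\sig\ii$-positivity under products is purely syntactic, because the definition only requires \emph{some} witnessing expression and a $\sig\jj$-positive factor with $\jj>\ii$ lies in $\sh^{\ii}(\Bi)$ (not merely being $\sig\ii$-free), so concatenating the two expressions already exhibits a $\sig\ii$-positive word for the product --- the acyclicity of Lemma~\ref{L:Acyclic} is needed only for irreflexivity and the mutual exclusion already packaged in Proposition~\ref{P:Order}, plus the (easy, but worth a word) termination of your shift-peeling recursion.
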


It is then easy to check that the order so defined on~$\Bi$ is a lexicographic extension of the order~$\prefs$ on special braids: for every braid~$\br$, the relation $\br > 1$ holds if, and only if, whenever $\br_1 \wdots \br_\nn, \br'_1 \wdots \br'_\nn$ are special and we have
$$(\br_1 \wdots \br_\nn) \act \br = (\br'_1 \wdots \br'_\nn),$$
the sequence~$(\br_1 \wdots \br_\nn)$ is smaller than~$(\br'_1 \wdots \br'_\nn)$ with respect to the lexicographical extension of~$\prefs$ (look at the first~$\ii$ such that $\br_\ii$ and~$\br'_\ii$ do not coincide).

Let us return to the monoid~$\BPi$. The following was proved by R.\,Laver~\cite{Lvc}, and then made more precise by S.\,Burckel \cite{Bus} and J.\,Fromentin~\cite{Fro, FroJEMS, FrP}: 

\begin{prop}[\bf well-order]
For every~$\nn$, the restriction of the braid order~$<$ to~$\BP\nn$ is a well-order of order type the Cantor ordinal~$\omega^{\omega^{\nn - 2}}$; the restriction of~$<$ to~$\BPi$ is a well-order of order type~$\omega^{\omega^\omega}$. 
\end{prop}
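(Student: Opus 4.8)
The statement has two logically independent contents that I would separate at the outset: that $<$ is \emph{well-founded} on each $\BP\nn$ (and on $\BPi$), and the \emph{exact} order types. I would prove both at once by induction on the number of strands~$\nn$, the engine being a normal form that reduces the combinatorics of $\nn$-strand positive braids to that of $(\nn-1)$-strand ones. The base case is transparent: $\BP2$ is the free monoid $\{\sig1^\mm : \mm \ge 0\}$, on which $<$ agrees with the length order, so $(\BP2, <)$ has order type $\omega = \omega^{\omega^0}$, matching $\omega^{\omega^{\nn-2}}$ for $\nn = 2$.

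For the inductive step I would attach to each $\br$ in $\BP\nn$ a finite sequence $N(\br) = (\ww_0 \wdots \ww_\pp)$ of elements of $\BP{\nn-1}$, its \emph{rotating} (or block) normal form: one repeatedly peels off the part of $\br$ lying beyond its last occurrence of the top generator $\sig{\nn-1}$, applying a fixed flip automorphism so that each peeled block again represents an $(\nn-1)$-strand positive braid. The two things to prove about $N$ are that it is injective with an explicitly describable image, and---the real content---that it is \emph{order preserving} when finite sequences over $\BP{\nn-1}$ are compared antilexicographically (comparison at the largest index where two entries differ), reading the entries with the inductively available order on $\BP{\nn-1}$. Granting this comparison lemma, $(\BP\nn, <)$ is isomorphic to a set of admissible sequences over $(\BP{\nn-1}, <)$ under that order; since the admissibility conditions are mild enough to be compatible with the full order type, one obtains $\mathrm{ot}(\BP\nn) = \mathrm{ot}(\BP{\nn-1})^{\omega}$. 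The ordinal arithmetic then closes the induction: with $\mathrm{ot}(\BP{\nn-1}) = \omega^{\omega^{\nn-3}}$ one gets $(\omega^{\omega^{\nn-3}})^{\omega} = \omega^{\omega^{\nn-3}\cdot\omega} = \omega^{\omega^{\nn-2}}$.

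For $\BPi$ I would not argue by a naive union, because $\BP\nn$ is \emph{not} an initial segment of $\BP{\nn+1}$: indeed $\siginv\nn\sig1$ is $\sig1$-positive, so $\sig\nn < \sig1$, and arbitrarily high generators occur below small braids. Instead, the lower bound $\mathrm{ot}(\BPi) \ge \omega^{\omega^\omega}$ is immediate, since each $\BP\nn$ embeds as a suborder of the well-order $\BPi$ and $\sup_\nn \omega^{\omega^{\nn-2}} = \omega^{\omega^\omega}$; for the matching upper bound I would run the same peeling normal form, now defined without reference to a fixed strand count, so that every positive braid receives a finite sequence of strictly ``narrower'' normal forms and the resulting order type is read off as $\omega^{\omega^\omega}$.

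The step I expect to dominate the work is the comparison lemma: translating the $\sig1$-based trichotomy of Proposition~\ref{P:Order} and Corollary~\ref{C:Order} into a statement about the normal form, i.e. showing that $\br < \br'$ is detected by the antilexicographic comparison of $N(\br)$ and $N(\br')$. This is also where well-foundedness is secretly bought, for an infinite $<$-descending chain would produce, through the isomorphism, an infinite antilexicographically descending chain of finite sequences over a set already known to be well-ordered, which is impossible. This comparison lemma is the technical heart of the results of Burckel and Fromentin; the original well-foundedness, due to Laver, moreover relied on substantially deeper (set-theoretic) input that the normal-form approach is precisely designed to replace.
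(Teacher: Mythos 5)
The paper does not prove this proposition at all: it is quoted as a known result, with the well-foundedness credited to Laver~\cite{Lvc} and the order-type computations to Burckel~\cite{Bus} and Fromentin~\cite{Fro, FroJEMS, FrP}, so there is no in-paper argument to compare against. Your plan is a faithful outline of the Burckel--Fromentin route: induction on~$\nn$ via a block (rotating) normal form over~$\BP{\nn-1}$, an antilexicographic comparison lemma, and the ordinal arithmetic $(\omega^{\omega^{\nn-3}})^{\omega}=\omega^{\omega^{\nn-3}\cdot\omega}=\omega^{\omega^{\nn-2}}$, which is correct, as is the base case $\mathrm{ot}(\BP2)=\omega$. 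You also correctly identify the two traps: $\BP\nn$ is not an initial segment of~$\BP{\nn+1}$ (your witness $\sig\nn<\sig1$ is right), so neither the well-foundedness of~$\BPi$ nor its order type follows from a naive union, and the comparison lemma plus the claim that the admissible sequences still realize the full order type $\alpha^\omega$ (rather than something smaller) carry essentially all of the difficulty --- in Burckel's and Fromentin's papers these occupy dozens of pages. As a proof your text is therefore only a scaffold with the load-bearing lemma left as a black box, but as a plan it is accurate, correctly locates where the work lies, and matches the strategy of the references the paper cites; one historical nuance is that Laver's original well-foundedness proof did not use such a normal form but the well-foundedness of iterated left division in free monogenerated shelves, which is the point of the paper's remark that the normal-form approach ``made more precise'' the earlier result.
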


If $\br$ is a positive $\nn$-strand braid, Lemma~\ref{L:ActionPos} guarantees that $(\br_1 \wdots \br_\nn) \act \br$ is defined for every sequence of braids~$(\br_1 \wdots \br_\nn)$. Thus, reversing the perspective and starting from the sequence~$(\br_1 \wdots \br_\nn)$, we see that the family of all braids~$\br$ for which $(\br_1 \wdots \br_\nn) \act \br$ is defined includes the monoid~$\BP\nn$.

\begin{ques}[\bf Laver conjecture]
{\it For every sequence of braids~$(\br_1 \wdots \br_\nn)$, is the family of all braids~$\br$ for which $(\br_1 \wdots \br_\nn) \act \br$ is defined well-ordered by~$<$?}
\end{ques}

R.\,Laver conjectured a positive answer. A similar question can be raised for every left-shelf~$(\SS, \op)$, and, in some cases, the family in question reduces to~$\BP\nn$~\cite{Lra}. But this is not the case in general, in particular in the case of~$\Bi$, and the question remains open, and it seems difficult. Note that this is a pure question of topology, in that it exclusively involves braids and no other structure.

\section{Quotients}\label{S:Quot}

We now summarize results about the selfdistributive operations obtained from the braid operation~$\op$ by quotienting the group or shelf structure. Although several natural and simple questions arise, little is known here. We successively consider the case of the symmetric group (Subsection~\ref{SS:Perm}) and of the Burau representation (Subsection~\ref{SS:Burau}), which arise when the group structure is quotiented, and the case of Laver tables, which arise when the shelf structure is quotiented (Subsection~\ref{SS:Laver}).

\subsection{Permutations}\label{SS:Perm}

For every~$\nn$, a well understood quotient of the group~$\BR\nn$ appears when the elements~$\sigg\ii2$ are collapsed, namely the symmetric group~$\Sym\nn$. Geometrically, the projection corresponds to associating with a braid~$\br$ the permutation~$\perm(\br)$ so that, for $1 \le \ii < \nn$, the strand finishing at position~$\ii$ begins at position~$\perm(\br)(\ii)$ in any braid diagram representing~$\br$. The projection extends to~$\Bi$, the image being the symmetric group~$\Si$ of all permutations of~$\ZZZZp$ that move finitely many entries, equipped with composition. 

We denote by~$\ss_\ii$ the projection of~$\sig\ii$, that is, the transposition exchanging~$\ii$ and~$\ii + 1$. We use~$\sh$ for the \emph{shift} endomorphism of~$\Si$ defined, for~$\ff$ in~$\Si$, by~$\sh(\ff)(1) := 1$ and $\sh(\ff)(\nn) := \ff(\nn - 1) + 1$ for~$\nn \ge 2$. 

As the operation~$\op$ on~$\Bi$ is defined from the group multiplication and the endomorphism~$\sh$, every group morphism from~$\Bi$ to a group~$\GG$ that carries~$\sh$ to some convenient endomorphism of~$\GG$ induces a $\op$-morphism, and the image operation automatically obeys the selfdistributivity law~$\LD$. In this way, we obtain:

\begin{prop}[\bf permutation shelf]
For~$\ff, \gg$ in~$\Si$, define
\begin{equation}
\ff \op \gg := \ff \cdot \sh(\gg) \cdot \ss_1 \cdot \sh(\ff)\inv.
\end{equation}
Then $(\Si, \op)$ is a left-shelf, and $\perm$ is a surjective morphism from~$(\Bi, \op)$ to~$(\Si, \op)$.
\end{prop}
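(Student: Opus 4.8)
The plan is to deduce both assertions from a single structural fact: the permutation map $\perm\colon \Bi \to \Si$ is a group epimorphism that intertwines the two shift maps. Concretely, I would first check that $\perm \comp \sh = \sh \comp \perm$, where on the left $\sh$ is the shift endomorphism of~$\Bi$ and on the right it is the shift endomorphism of~$\Si$. Both composites are group homomorphisms from~$\Bi$ to~$\Si$, so it suffices to compare them on the generators~$\sig\ii$. On the braid side we have $\perm(\sh(\sig\ii)) = \perm(\sig{\ii+1}) = \ss_{\ii+1}$, while on the permutation side a direct computation from the definition of the shift of~$\Si$ shows that $\sh(\ss_\ii)$ fixes every point except $\ii+1$ and $\ii+2$, which it exchanges; that is, $\sh(\ss_\ii) = \ss_{\ii+1}$. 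Hence the two homomorphisms agree on generators, so $\perm \comp \sh = \sh \comp \perm$ holds on all of~$\Bi$. I would also record the evident equality $\perm(\sig1) = \ss_1$.

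Granting this, the morphism property is a one-line expansion. Applying $\perm$ to the definition~\eqref{E:BraidOp} and using that $\perm$ is a group homomorphism together with the two facts just established, I get
\begin{align*}
\perm(\br_1 \op \br_2)
&= \perm(\br_1) \cdot \perm(\sh(\br_2)) \cdot \perm(\sig1) \cdot \perm(\sh(\br_1))\inv \\
&= \perm(\br_1) \cdot \sh(\perm(\br_2)) \cdot \ss_1 \cdot \sh(\perm(\br_1))\inv = \perm(\br_1) \op \perm(\br_2).
\end{align*}
Thus $\perm$ is a $\op$-morphism. It is surjective because its image already contains every transposition~$\ss_\ii = \perm(\sig\ii)$, and these generate~$\Si$.

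Finally, the left-shelf property of~$(\Si, \op)$ comes for free from surjectivity. Given $\ff, \gg, \hh$ in~$\Si$, I would choose braid preimages $\br_1, \br_2, \br_3$ under~$\perm$; since $\perm$ is a $\op$-morphism and $(\Bi, \op)$ obeys~$\LD$ by the braid-shelf proposition, applying $\perm$ to the identity $\br_1 \op (\br_2 \op \br_3) = (\br_1 \op \br_2) \op (\br_1 \op \br_3)$ yields the corresponding identity for $\ff, \gg, \hh$, so~$\LD$ holds in~$\Si$. Alternatively, one can rerun the direct verification of the braid-shelf proposition verbatim: the only ingredients used there were that $\sh$ is an endomorphism, that $\ss_1$ commutes with the image of~$\sh^2$ (which here consists of permutations fixing $1$ and $2$, hence moving only points $\ge 3$), and that $\ss_1 \ss_2 \ss_1 \ss_2\inv = \ss_2 \ss_1$, all of which hold in~$\Si$. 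The only genuine computation in the whole argument is the verification $\sh(\ss_\ii) = \ss_{\ii+1}$; everything else is formal, so I expect no real obstacle.
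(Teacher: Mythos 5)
Your proposal is correct and follows essentially the same route as the paper, which gives no formal proof but justifies the proposition by exactly the principle you formalize: any group epimorphism from~$\Bi$ that intertwines the shift endomorphisms is automatically a $\op$-morphism, so the image operation inherits the law~$\LD$. The one computation you single out, $\sh(\ss_\ii) = \ss_{\ii+1}$ for the shift of~$\Si$, is indeed the only point needing verification.
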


On the shape of what was done for braids in Section~\ref{S:Spec}, it is natural to consider the substructure of~$(\Si, \op)$ generated by the identity map~$\id$ of~$\ZZZZp$.

\begin{defi}[\bf special permutation]
We denote by~$\Sisp$ the closure of~$\{\id\}$ in~$(\Si, \op)$. The elements of~$\Sisp$ are called \emph{special} permutations.
\end{defi}

Clearly, $(\Sisp, \op)$ is a monogenerated left-shelf, and $\perm$ induces a surjective morphism from~$(\Bisp, \op)$ onto~$(\Sisp, \op)$, since $\id$ is the permutation associated with the unit braid. One easily checks that, for every~Ê$\nn$, the right power~$\id^{[\nn]}$ is the $\nn$-cycle~$\ss_{\nn - 1} \pdots \ss_2\ss_1$. The left powers~$\id_{[\nn]}$ are more mysterious; the first values are
$$\id_{[2]} = \ss_1, \quad \id_{[3]} = \ss_2, \quad \id_{[4]} = \ss_2\ss_3\ss_1, \quad \id_{[5]} = \ss_3\ss_4\ss_2\ss_3\ss_4, \ ...$$
see Fig.~\ref{F:Perm} for an embryo of the table of~$(\Sisp, \op)$ starting with left powers.

\begin{figure}[htb]
$$\small
\begin{tabular}{c|ccccc}
$\Sisp$\VR(0,2)&\hbox to 15mm{\hfil$\id$\hfil}&\hbox to 15mm{\hfil$\ss_1$\hfil}&\hbox to 15mm{\hfil$\ss_2$\hfil}&\hbox to 15mm{\hfil$\ss_2\ss_3\ss_1$\hfil}&\hbox to 10mm{\hfil$\pdots$\hfil}\\
\hline
$\id$\VR(3,0)&$\ss_1$&$\ss_2\ss_1$&$\ss_3\ss_1$&$\ss_3\ss_4\ss_2\ss_1$&$\pdots$\\
$\ss_1$&$\ss_2$&$\ss_2\ss_1$&$\ss_3\ss_2$&$\ss_3\ss_4\ss_2\ss_1$&$\pdots$\\
$\ss_2$&$\ss_2\ss_3\ss_1$&$\ss_3\ss_1$&$\ss_2\ss_1$&$\ss_3\ss_4\ss_2\ss_3\ss_4\ss_1$&$\pdots$\\
$\ss_2\ss_3\ss_1$&$\ss_3\ss_4\ss_2\ss_3\ss_4$&$\ss_3\ss_4\ss_2\ss_3\ss_4\ss_1$&$\ss_4\ss_3$&$\ss_3\ss_1$&$\pdots$\\
$\pdots$&$\pdots$&$\pdots$&$\pdots$&$\pdots$&$\pdots$\\
\end{tabular}$$
\vspace{-5mm}
\caption{\sf An embryo of the table of~$(\Sisp, \op)$ based on left powers of~$\id$.}
\label{F:Perm}
\end{figure}

In the left-shelf~$(\Sisp, \op)$, the division relation~$\pref$ of Def.~Ê\ref{D:Div} admits cycles: for instance, one can check the equality 
\begin{equation}\label{E:Cycle}
\ss_2\ss_1 = ((\ss_2\ss_1) \ \op\  \ss_1) \ \op\  \ss_2\ss_3\ss_1,
\end{equation}
whence $\ss_2 \ss_1 \prefs \ss_2\ss_1$. It follows from Corollary~\ref{C:FreeAcyclic} that $(\Sisp, \op)$ is not a free left-shelf: typically, \eqref{E:Cycle} translates into $\id^{[3]} = (\id^{[3]} \op \id_{[2]}) \op \id_{[4]}$,  a nontrivial relation whose counterpart fails in~$\Bisp$.

\begin{ques}[\bf presentation]\label{Q:PermPres}
{\it Does the left-shelf~$(\Sisp, \op)$ admit a finite presentation in terms of its generator~$\id$?}
\end{ques}

Nothing is known. One of the very few results about~$(\Si, \op)$ known so far is an alternative definition of~$\op$ using conjugation of injections. 

\begin{prop}[\bf injection shelf]\label{P:Inj}
Let~$\Ii$ be the monoid of all injections from~$\ZZZZp$ to itself equipped with composition, and let~$\SH$ be the element of~$\Ii$ (``shift'') that maps~$\nn$ to~$\nn + 1$ for every~$\nn$. For~$\ff, \gg$ in~$\Ii$, define $\ff \op \gg$ in~$\Ii$ by
\begin{equation}\label{E:Inj}
\ff \op \gg(\nn):= \ff(\gg(\ff\inv(\nn))) \text{ for $\nn \in \Im(\ff)$, and } \ff \op \gg(\nn):= \nn \text{ otherwise}.
\end{equation}
Then $(\Ii, \op)$ is a left-shelf, and the map~$\phi: \ff \mapsto \ff \cdot \SH$ defines an embedding from~$(\Si, \op)$ into~$(\Ii, \op)$.
\end{prop}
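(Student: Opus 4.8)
The plan is to verify the two claims directly: first that the operation $\op$ defined by~\eqref{E:Inj} makes $(\Ii, \op)$ a left-shelf, and second that $\phi: \ff \mapsto \ff \cdot \SH$ is an injective $\op$-morphism from $(\Si, \op)$ into $(\Ii, \op)$. The key structural observation, which I would isolate first, is that the formula~\eqref{E:Inj} is a partial conjugation: on the image $\Im(\ff)$ the map $\ff \op \gg$ acts as the honest conjugate $\ff \comp \gg \comp \ff\inv$, and off $\Im(\ff)$ it acts as the identity. This "conjugate where defined, identity elsewhere" recipe is exactly what repairs the fact that $\ff\inv$ is only a partial inverse when $\ff$ is merely injective rather than bijective.

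For the \emph{left-shelf} claim I would verify the law~$\LD$ pointwise. Fix $\nn \in \ZZZZp$ and compute both $\ff \op (\gg \op \hh)(\nn)$ and $(\ff \op \gg) \op (\ff \op \hh)(\nn)$, splitting into cases according to whether $\nn$ lies in $\Im(\ff)$ (and, when it does, whether $\gg(\ff\inv(\nn))$ forces the inner evaluations to stay inside the relevant images). The main point to check is that $\Im(\ff \op \gg) = \Im(\ff)$: since $\ff \op \gg$ conjugates by $\ff$ on $\Im(\ff)$ and the conjugate of an injection by $\ff$ permutes $\Im(\ff)$, while off $\Im(\ff)$ the map is the identity, one gets $\Im(\ff \op \gg) = \Im(\ff)$. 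With this image identity in hand, both sides of~$\LD$ reduce to the identity off $\Im(\ff)$, and on $\Im(\ff)$ they both collapse to the genuine conjugate $\ff \comp (\gg \comp \hh \comp \gg\inv) \comp \ff\inv$ evaluated via the ordinary associativity of composition. This is essentially the same cancellation that makes conjugation selfdistributive in a group, now carried out on the subset $\Im(\ff)$ where $\ff$ is invertible.

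For the \emph{morphism} claim I would show that $\phi(\ff \op \gg) = \phi(\ff) \op \phi(\gg)$ for $\ff, \gg \in \Si$; that is,
\begin{equation*}
(\ff \op \gg) \comp \SH = (\ff \comp \SH) \op (\gg \comp \SH),
\end{equation*}
where on the left $\op$ is the permutation operation and on the right it is the injection operation~\eqref{E:Inj}. Both $\ff \comp \SH$ and $\gg \comp \SH$ are injections whose image is $\ZZZZp \setminus \{1\}$ (since $\SH$ misses $1$ and $\ff, \gg$ are bijections fixing the complement of a finite set), so the injection-operation $(\ff \comp \SH) \op (\gg \comp \SH)$ fixes $1$ and acts by conjugation on $\ZZZZp \setminus \{1\}$; I would match this against the explicit definition of the permutation $\op$, checking that the shift $\SH$ plays the role of $\sh$ and that the fixed value at $1$ together with the transposition $\ss_1$ built into the permutation operation are reproduced correctly. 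Injectivity of $\phi$ is immediate, since $\ff \comp \SH = \gg \comp \SH$ forces $\ff = \gg$ by right-cancelling the injection $\SH$.

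\textbf{The main obstacle} I anticipate is the bookkeeping at the single point $\nn = 1$ in the morphism computation, and more generally the careful case analysis in the pointwise verification of~$\LD$ near the boundary of $\Im(\ff)$: the formula~\eqref{E:Inj} switches between conjugation and identity across $\Im(\ff)$, so one must confirm that the two expressions agree not only where both act by conjugation but also at the transition, which is precisely where the image identity $\Im(\ff \op \gg) = \Im(\ff)$ does the essential work. Once that identity is established, the rest is the routine associativity of composition on the subset where $\ff$ is genuinely invertible.
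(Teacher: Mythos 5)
Your overall strategy (pointwise verification of the law~$\LD$, then a direct check that $\phi$ is an injective morphism) is the same as the paper's, but the key identity you isolate is false, and the proof as organized would break. You claim $\Im(\ff \op \gg) = \Im(\ff)$, on the grounds that ``the conjugate of an injection by~$\ff$ permutes~$\Im(\ff)$''. It does not: $\ff \comp \gg \comp \ff\inv$ maps $\Im(\ff)$ injectively \emph{into} $\ff(\Im(\gg))$, which is a proper subset of~$\Im(\ff)$ whenever $\gg$ is not surjective. The identities that actually hold, and that the paper states and uses, are $\Im(\ff \op \gg) = \ff(\Im(\gg)) \cup \coIm(\ff)$ and $\coIm(\ff \op \gg) = \ff(\coIm(\gg))$, where $\coIm$ denotes the complement of the image. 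A concrete counterexample to your claim: $\id \op \SH = \SH$, whose image is $\ZZZZp \setminus \{1\} \ne \ZZZZp = \Im(\id)$. Consequently your case split at~$\Im(\ff)$ is the wrong one: on $\Im(\ff) \setminus \ff(\Im(\gg))$ the ``genuine conjugate'' $\ff \comp \gg \comp \hh \comp \gg\inv \comp \ff\inv$ is not even defined there (the value $\gg\inv(\ff\inv(\nn))$ does not exist), and both sides of~$\LD$ are in fact the identity on that set --- for the right-hand side this is precisely because such an~$\nn$ lies in $\coIm(\ff \op \gg) = \ff(\coIm(\gg))$, i.e., it is the corrected image formula, not yours, that does the essential work. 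The correct case split is at $\Im(\ff \comp \gg) = \ff(\Im(\gg))$: there both sides equal $\ff(\gg(\hh(\gg\inv(\ff\inv(\nn)))))$, and everywhere else both equal~$\nn$ (off~$\Im(\ff)$ one still has to note that $\ff\op\gg$ and $\ff\op\hh$ fix~$\nn$, so the conjugation branch of the definition returns~$\nn$).

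The same slip recurs in the morphism step: with the convention $\phi(\ff)(\nn) = \ff(\nn+1)$ under which the statement works, one has $\Im(\phi(\ff)) = \ff(\ZZZZp \setminus \{1\}) = \ZZZZp \setminus \{\ff(1)\}$, not $\ZZZZp \setminus \{1\}$; the common fixed point of $\phi(\ff \op \gg)$ and $\phi(\ff) \op \phi(\gg)$ is~$\ff(1)$, and away from it both maps send~$\nn$ to $\ff(\gg(\ff\inv(\nn)) + 1)$. Your injectivity argument for~$\phi$ is fine. None of this is unfixable --- the computation goes through once the images are tracked correctly --- but as written the central lemma is wrong and the verification of~$\LD$ built on it does not close.
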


\begin{proof}
For all~$\ff, \gg$, the definition implies $\Im(\ff \op \gg) = \ff(\Im(\gg)) \cup \coIm(\ff)$ and $\coIm(\ff \op \gg) = \ff(\coIm(\gg))$. From there, one easily checks that, for every~$\nn$, both $\ff \op (\gg \op \hh)$ and $(\ff \op \gg) \op (\ff \op \hh)$ map~$\nn$ to~$\ff(\gg(\hh(\gg\inv(\ff\inv(\nn)))))$ for~$\nn$ in~$\Im(\ff\cdot\gg)$, and to~$\nn$ otherwise. Thus $\op$ obeys the selfdistributivity law~$\LD$.

To prove that $\phi$ is a morphism, carefully applying the definitions shows that, for all~$\ff, \gg$ in~$\Si$, both $\phi(\ff \op \gg)$ and $\phi(\ff) \op \phi(\gg)$ keep~$\ff(1)$ fixed, and map~$\nn$ to~$\ff(\gg(\ff\inv(\nn)) + 1)$ for~Ê$\nn \not= \ff(1)$. Finally, $\phi(\ff) = \phi(\ff')$ implies $\sh(\ff) = \sh(\ff')$, whence $\ff = \ff'$, so $\phi$ is injective.
\end{proof}

By definition, the morphism~$\phi$ of Prop.~\ref{P:Inj} maps the identity permutation~$\id$ to the injection~$\SH$. Hence it induces an isomorphism from the left-shelf~$(\Sisp, \op)$ to its image, which is the substructure~$\Iisp$ of~$(\Ii, \op)$ generated by~$\SH$. The left-shelf~$(\Iisp, \op)$ is directly reminiscent of the left-shelf~$\Iter(\jj)$ constructed in set theory using the iterations of an elementary embedding~$\jj$ under the ``application'' operation, see~\cite{Lvb} or~\cite[Chapter~XII]{Dgb}: the latter structure implies the existence of an injection~$\SHt$ in~$\Ii$ and of a partial selfdistributive operation~$\opt$ on (a subset of)~$\Ii$ such that $\opt$ is everywhere defined on the substructure generated by~$\SHt$, and $\ff \opt \gg(\nn) = \ff(\gg(\ff\inv(\nn)))$ holds for $\nn$ in~$\Im(\ff)$, but, instead of ``stupidly'' completing with $\ff \op \gg(\nn) := \nn$ for~$\nn$ in~$\coIm(\ff)$ as in~\eqref{E:Inj}, the operation~$\opt$ is constructed so that $\ff \opt \gg$ is increasing when defined. The latter condition implies that the injection~$\SHt$ must be a fast growing function~\cite{Dou, DoJ, Jech}, and its existence is currently proved only from a large cardinal axiom (``Laver cardinal'')~\cite[Chapter~XIII]{Dgb}.

\begin{ques}[\bf increasing injections]
{\it Can one define without any set theoretical assumption a selfdistributive operation on increasing injections of~$\ZZZZp$?}
\end{ques}

The question seems very difficult. J.T.\,Moore wondered whether there could be a connection with the F{\o}lner function of Thompson's group~$F$~\cite{Moore}.

\subsection{Linear representations}\label{SS:Burau}

Linear representations of braid groups provide further quotients. The Lawrence--Krammer representation is known to be faithful~\cite{Big, Krb}, so its image is just isomorphic to its domain, and nothing new seems to be expectable here.

By contrast, the Burau representation is not faithful~\cite{LoP}, and, therefore, its image is a proper quotient. To work with~$\Bi$, we have to consider the direct limit $\mathrm{GL}_\infty(\ZZZZ[t, t\inv])$ of the groups~$\mathrm{GL}_\nn(\ZZZZ[t, t\inv])$ (identified with invertible $\nn$-mat\-rices) with the top--left embeddings of matrices. The (unreduced) version of the representation is defined by $$\rho(\sig\ii) = \Sigma_\ii := \sh^{\ii - 1}\bigg(\begin{pmatrix}1-t&t\\1&0\end{pmatrix}\bigg),$$
where $\sh$ is the obvious bottom--right shift of matrices. 

Projecting the braid operation~$\op$ yields a selfdistributive operation on the image of~$\rho$. The latter turns out to extend to arbitrary elements of~$\mathrm{GL}_\infty(\ZZZZ[t, t\inv])$:

\begin{prop}[\bf Burau shelf]
For~$\AA, \BB$ in~$\mathrm{GL}_\infty(\ZZZZ[t, t\inv])$, define
\begin{equation}\label{E:BurauOp}
\AA \op \BB:= \AA \cdot \sh(\BB) \cdot \Sigma_1 \cdot \sh(\AA)\inv.
\end{equation}
Then $(\mathrm{GL}_\infty(\ZZZZ[t, t\inv]), \op)$ is a left-shelf, and the Burau representation is a morphism from~$(\Bi, \op)$ to~$(\mathrm{GL}_\infty(\ZZZZ[t, t\inv]), \op)$.
\end{prop}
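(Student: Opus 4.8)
The plan is to mimic, essentially verbatim, the proof that $(\Bi, \op)$ is a left-shelf, replacing braids by matrices: the only facts that earlier computation actually used are purely group-theoretic and have exact matrix analogues. First I would record the three structural ingredients that drive the check of left selfdistributivity for the operation of~\eqref{E:BurauOp}. (i)~The matrix shift~$\sh$ is a group endomorphism of~$\mathrm{GL}_\infty(\ZZZZ[t, t\inv])$, being the multiplicative bottom--right block placement. (ii)~The matrix~$\Sigma_1$ commutes with every matrix in~$\Im(\sh^2)$: indeed $\Sigma_1$ is supported on the coordinates~$\{1, 2\}$ and acts as the identity on the rest, whereas $\sh^2(M)$ acts as the identity on~$\{1, 2\}$ and only moves the coordinates~$\ge 3$, so the two are block diagonal with complementary supports. (iii)~The matrices~$\Sigma_1$ and~$\Sigma_2 = \sh(\Sigma_1)$ satisfy the braid relation $\Sigma_1\Sigma_2\Sigma_1 = \Sigma_2\Sigma_1\Sigma_2$, equivalently $\Sigma_1\Sigma_2\Sigma_1\Sigma_2\inv = \Sigma_2\Sigma_1$; this is nothing but the standard verification that the (unreduced) Burau assignment respects the defining relations of~$\Bi$, so that $\rho$ is well defined.

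With these in hand, left selfdistributivity of~\eqref{E:BurauOp} follows by literally transcribing the chain of equalities from the braid-shelf proof. Expanding $\AA \op (\BB \op C)$ and $(\AA \op \BB) \op (\AA \op C)$ and cancelling the factors $\sh(\AA)\inv\sh(\AA)$ and $\sh^2(\AA)\inv\sh^2(\AA)$, one checks that the left-hand side reduces to the common value using only the commutation~(ii) of~$\Sigma_1$ with~$\Im(\sh^2)$, while the right-hand side requires in addition the relation~(iii); both sides equal
$$\AA \cdot \sh(\BB) \cdot \sh^2(C) \cdot \Sigma_2\Sigma_1 \cdot \sh^2(\BB)\inv \cdot \sh(\AA)\inv.$$
This establishes the law~\eqref{LD} on all of~$\mathrm{GL}_\infty(\ZZZZ[t, t\inv])$, not merely on the image of the representation.

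For the second assertion, the single point to establish is the intertwining relation $\rho \comp \sh = \sh \comp \rho$ between the shift on braids and the shift on matrices. Both $\rho \comp \sh$ and $\sh \comp \rho$ are group morphisms from~$\Bi$ to~$\mathrm{GL}_\infty(\ZZZZ[t, t\inv])$, and they agree on each generator, since $\rho(\sh(\sig\ii)) = \rho(\sig{\ii+1}) = \Sigma_{\ii+1}$ and $\sh(\rho(\sig\ii)) = \sh(\Sigma_\ii) = \Sigma_{\ii+1}$; hence they coincide. Granting this, and using that $\rho$ is a group homomorphism with $\rho(\sig1) = \Sigma_1$, one expands
$$\rho(\br_1 \op \br_2) = \rho(\br_1)\,\rho(\sh(\br_2))\,\rho(\sig1)\,\rho(\sh(\br_1))\inv = \rho(\br_1)\,\sh(\rho(\br_2))\,\Sigma_1\,\sh(\rho(\br_1))\inv,$$
which is exactly $\rho(\br_1) \op \rho(\br_2)$ for the operation of~\eqref{E:BurauOp}.

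I expect no serious obstacle: the content is entirely a transfer of the earlier braid computation, the decisive observation being that that computation never used anything about braids beyond the group relations~(i)--(iii) now verified for the Burau matrices. The one mild subtlety worth stressing is that left selfdistributivity is asserted for \emph{all} invertible matrices rather than only for the non-surjective image of~$\rho$, so the verification must be carried out intrinsically from the matrix relations~(ii)--(iii), and cannot merely be pulled back through the representation.
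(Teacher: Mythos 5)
Your proposal is correct and follows essentially the same route as the paper, which justifies the statement by exactly the two facts you isolate: that $\Sigma_1$ commutes with every matrix in~$\Im(\sh^2)$ and that $\Sigma_1\Sigma_2\Sigma_1\Sigma_2\inv = \Sigma_2\Sigma_1$, after which the braid-shelf computation transcribes verbatim. Your additional remarks (the intertwining $\rho\comp\sh = \sh\comp\rho$ checked on generators, and the observation that the law must be verified intrinsically since $\rho$ is not surjective) are correct and merely make explicit what the paper leaves implicit.
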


The verification is the same as for~$\Bi$: the point is that $\Sigma_1$ commutes with every matrix in the image of~$\sh^2$, and that $\Sigma_1 \Sigma_2 \Sigma_1 \Sigma_2\inv$ is equal to~$\Sigma_2 \Sigma_1$. We may wonder whether the identity matrix generates under~$\op$ a free left-shelf. The answer must be negative: if so, the division relation would have no cycle in~$(\mathrm{GL}_\infty(\ZZZZ[t, t\inv]), \op)$, implying that the Burau image of a $\sig1$-positive braid would never be trivial, implying in turn that $\rho$ is injective---which is false. By the way, an example of a $\sig1$-positive braid (with five~$\sig1$ and no~$\siginv1$) whose Burau image is trivial is constructed in~\cite{Dfh}. This corresponds to a cycle of length~$5$ for division in $(\mathrm{GL}_\infty(\ZZZZ[t, t\inv]), \op)$.

Of course, on the model of Question~\ref{Q:PermPres}, we may ask for a presentation of the subshelf of $(\mathrm{GL}_\infty(\ZZZZ[t, t\inv]), \op)$ generated by the identity matrix. But so little is known about the matrix operation~$\op$ that the question seems out from reach. Clearly, \eqref{E:BurauOp} implies the relation $\det(\AA \op \BB) = -t \cdot \det(\BB)$. A similar but more exotic relation is
$$\shtr(\AA \op \BB) = \shtr(\BB) + t,$$
where $\shtr(\AA)$ (``shifted trace'') is the sum of all overdiagonal entries~$\sum_\ii\AA_{\ii, \ii+ 1}$.

\subsection{Laver tables}\label{SS:Laver}

For every positive integer~$\NN$, there exists a unique binary operation~$\op$ on~$\{1 \wdots \NN\}$ that satisfies $\xx \op 1 = \xx + 1 \mod \NN$ and obeys the law $\xx \op (\yy \op 1) = (\xx \op \yy) \op (\xx \op 1)$; the structure so obtained is a left-shelf if, and only if, $\NN$ is a power of~$2$. The structure with~$2^\nn$~elements is called the \emph{$\nn$th Laver table}, usually denoted by~$\AA_\nn$, see Fig.~\ref{F:Laver} for the first four tables. Laver tables appear as the elementary building bricks for constructing all (finite) monogenerated shelves~\cite{Drd, Drf, Smed}, and can adequately be seen as counterparts of cyclic groups in the SD-world. We refer to~\cite[Chapter~X]{Dgd}, \cite[Chapitre~XIV]{Diy} (in French), or to~\cite{DraS} for a more complete introduction. For every~$\nn$, projection modulo~$2^\nn$ defines a surjective homomorphism from~$\AA_{\nn + 1}$ to~$\AA_\nn$. It is conjectured that the inverse limit of the system so obtained is free. A proof of the conjecture is known under some large cardinal axiom~\cite{Lvd}, a very unusual and puzzling situation.

\vspace{-3mm}

\begin{figure}[htb]
\small
\begin{equation*}
\begin{tabular}{c|p{0.5mm}}
$\AA_0$&$1$\\
\hline
$1$&$1$
\end{tabular}
\quad 
\begin{tabular}{c|p{0.5mm}p{0.5mm}}
$\AA_1$&$1$&$2$\\
\hline
$1$&$2$&$2$\\
$2$&$1$&$2$\\
\end{tabular}
\quad 
\begin{tabular}{c|p{0.5mm}p{0.5mm}p{0.5mm}p{0.5mm}}
$\AA_2$&$1$&$2$&$3$&$4$\\
\hline
$1$&$2$&$4$&$2$&$4$\\
$2$&$3$&$4$&$3$&$4$\\
$3$&$4$&$4$&$4$&$4$\\
$4$&$1$&$2$&$3$&$4$\\
\end{tabular}
\quad 
\begin{tabular}{c|p{0.5mm}p{0.5mm}p{0.5mm}p{0.5mm}p{0.5mm}p{0.5mm}p{0.5mm}p{0.5mm}}
$\AA_3$&$1$&$2$&$3$&$4$&$5$&$6$&$7$&$8$\\
\hline
$1$&$2$&$4$&$6$&$8$&$2$&$4$&$6$&$8$\\
$2$&$3$&$4$&$7$&$8$&$3$&$4$&$7$&$8$\\
$3$&$4$&$8$&$4$&$8$&$4$&$8$&$4$&$8$\\
$4$&$5$&$6$&$7$&$8$&$5$&$6$&$7$&$8$\\
$5$&$6$&$8$&$6$&$8$&$6$&$8$&$6$&$8$\\
$6$&$7$&$8$&$7$&$8$&$7$&$8$&$7$&$8$\\
$7$&$8$&$8$&$8$&$8$&$8$&$8$&$8$&$8$\\
$8$&$1$&$2$&$3$&$4$&$5$&$6$&$7$&$8$\\
\end{tabular}
\vspace{-3mm}
\end{equation*}
\caption{\sf The first four Laver tables; observe the periodic behaviour of the rows, which consist of the repetition of a pattern whose length is itself a power of~$2$, a general phenomenon.}
\label{F:Laver}
\end{figure}

Because of the fundamental position of Laver tables in the world of monogenerated left-shelves, a natural question is to connect them with the braid shelf~$(\Bi, \op)$. As $(\Bisp, \op)$ is a free monogenerated left-shelf, there must exist for every~$\nn$ a congruence~$\equiv_\nn$ on~$(\Bisp, \op)$ such that $\Bisp{/}{\equiv_\nn}$ is isomorphic to~$\AA_\nn$. 

\begin{ques}[\bf $\AA_\nn$ as quotient of~$\Bi$]\label{Q:Quot}
{\it Does there exist a simple topological/algeb\-raic/combinatorial characterization of the relation~$\equiv_\nn$ on special braids? Can this relation be extended to arbitrary braids in a natural way?}
\end{ques}

At the moment, almost nothing is known about the question. In lack of an answer, we can look at the possible connection between the known quotient of~$(\Bi, \op)$, namely the left-shelf~$(\Si, \op)$ of Section~\ref{SS:Perm}, and Laver tables. Such a connection exists typically for the $2$-element Laver table~$\AA_1$.

\begin{prop}[\bf $\AA_1$ as quotient of~$\Si$]\label{P:Class}
Define the \emph{class}~$\cl\ff$ of a permutation~$\ff$ in~$\Si$ to be~$\ff\inv(1)$. Let~$\Sii:= \{\ff \in \Si \mid \cl\ff \le 2\}$ $($``small class'' permutations$)$.

\ITEM1 The set~$\Sii$ is closed under~$\op$, and it includes~$\Sisp$.

\ITEM2 Class equality is a congruence on the left-shelf~$(\Sii, \op)$, hence on~$(\Sisp, \op)$. In both cases, the quotient is the Laver table~$\AA_1$.
\end{prop}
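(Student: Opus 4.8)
The plan is to reduce the whole statement to a single computation: expressing the class $\cl{\ff \op \gg}$ in terms of $\cl\ff$ and $\cl\gg$. Since $\cl\ff = \ff\inv(1)$ and, by definition of~$\op$, $(\ff \op \gg)\inv = \sh(\ff) \cdot \ss_1 \cdot \sh(\gg)\inv \cdot \ff\inv$, I would evaluate this inverse at~$1$, tracing the point~$1$ from right to left: first $\ff\inv(1) = \cl\ff$, then successively $\sh(\gg)\inv$, $\ss_1$, and $\sh(\ff)$. Only two elementary facts about the shift are needed, both immediate from its definition on~$\Si$: we have $\sh(\hh)(1) = 1$, and $\sh(\hh)\inv(\mm + 1) = \hh\inv(\mm) + 1$ for $\mm \ge 1$.

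Restricting to $\ff, \gg \in \Sii$, that is, to $\cl\ff, \cl\gg \le 2$, the trace splits into three cases. If $\cl\ff = 1$, then $\sh(\gg)\inv$ fixes~$1$, $\ss_1$ sends it to~$2$, and $\sh(\ff)(2) = \ff(1) + 1 = 2$, so $\cl{\ff \op \gg} = 2$. If $\cl\ff = 2$ and $\cl\gg = 1$, then $\sh(\gg)\inv(2) = \cl\gg + 1 = 2$, $\ss_1$ returns~$1$, and $\sh(\ff)(1) = 1$, so $\cl{\ff \op \gg} = 1$. If $\cl\ff = 2$ and $\cl\gg = 2$, then $\sh(\gg)\inv(2) = 3$, which $\ss_1$ fixes, and $\sh(\ff)(3) = \ff(2) + 1 = 2$, using $\cl\ff = 2 \Leftrightarrow \ff(2) = 1$; so $\cl{\ff \op \gg} = 2$. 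In every case the output class is~$\le 2$, which proves that $\Sii$ is closed under~$\op$. Since $\cl{\id} = 1$ and $\Sisp$ is generated by~$\id$, closure then gives $\Sisp \subseteq \Sii$, settling part~(i).

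For part~(ii), the three cases show that $\cl{\ff \op \gg}$ depends only on the pair $(\cl\ff, \cl\gg)$; hence class equality is a congruence on $(\Sii, \op)$, and reading off the three outputs recovers exactly the multiplication table of~$\AA_1$ (namely $1 \op 1 = 1 \op 2 = 2 \op 2 = 2$ and $2 \op 1 = 1$). The same congruence restricts to~$\Sisp$; to check that the quotient is the full table~$\AA_1$ rather than collapsing to a point, I would note that both classes already occur among special permutations---$\cl{\id} = 1$ and $\cl{\id \op \id} = \cl{\ss_1} = 2$---which handles $\Sisp$ and, a fortiori, $\Sii$.

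The computation is otherwise routine; the only point requiring care is the case $\cl\ff = \cl\gg = 2$, where the value $\ff(2)$ itself (not merely the class of~$\ff$) enters the trace, and one must invoke the equivalence $\cl\ff = 2 \Leftrightarrow \ff(2) = 1$ to simplify $\ff(2) + 1$ back to~$2$. Keeping the bookkeeping of $\sh$ versus $\sh\inv$ straight, and spotting where $\ss_1$ acts nontrivially, is the whole content of the argument.
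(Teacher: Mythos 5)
Your proof is correct, and it arrives at the same three-case analysis as the paper by a genuinely more direct route. The paper first establishes a structural normal form --- a permutation has class~$\nn$ if and only if it can be written $\sh(\ff')\,\ss_1\ss_2\cdots\ss_{\nn-1}$ --- and then, in each of the three cases, manipulates words using that $\ss_1$ commutes with $\Im(\sh^2)$, that $\ss_1^2=\id$, and that $\ss_1\ss_2\ss_1\ss_2=\ss_2\ss_1$, so as to put $\ff\op\gg$ back into normal form and read off its class. You instead evaluate $(\ff\op\gg)\inv$ at the point~$1$ directly, which needs only the two elementary facts about~$\sh$ that you state and no normal form or relations at all; this is shorter and less error-prone, at the cost of being purely computational rather than exhibiting the normal form (which the paper reuses implicitly elsewhere). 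Your case values agree with the paper's displayed formulas and with the table of~$\AA_1$; in particular you correctly obtain class~$2$ when $\cl\ff=\cl\gg=2$ (the paper's prose says ``of class~$1$'' there, a slip, since its own final expression ends in~$\ss_1$). You also add the small but worthwhile check that both classes are realized in~$\Sisp$, so the quotient is all of~$\AA_1$ rather than a collapse; the paper leaves this implicit. The one convention you rely on silently is that products act by $(\uu\vv)(x)=\uu(\vv(x))$, which is the convention consistent with the paper's assertion that $\ss_{\nn-1}\cdots\ss_1$ maps~$1$ to~$\nn$; a sentence fixing it would make the trace unambiguous.
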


\begin{proof}
\ITEM1 As $\ss_{\nn - 1} {\pdots} \ss_2 \ss_1$ maps~$1$ to~$\nn$, a permutation~$\ff$ in~$\Si$ is of class~$\nn$ if, and only if, $\ff \ss_{\nn - 1} {\pdots} \ss_1$ is of class~$1$. On the other hand, a permutation~$\ff$ in~$\Si$ keeps~$1$ fixed if, and only if, it lies in the image of the shift mapping. Thus, for every~$\nn$, a permutation~$\ff$ has class~$\nn$ if, and only if, it can be written as $\sh(\ff') \ss_1 \ss_2 {\pdots} \ss_{\nn - 1}$ for some~$\ff'$. In particular, the elements of~$\Sii$ are of the form~$\sh(\ff')$ or~$\sh(\ff') \ss_1$.

Assume that $\ff$ has class~$1$, say $\ff = \sh(\ff')$. For every~$\gg$ in~$\Si$, we find
\begin{equation*}
\ff \op \gg = \sh(\ff') \, \sh(\gg) \, \ss_1 \, \sh^2(\ff')\inv = \sh(\ff') \, \sh(\gg) \, \sh^2(\ff')\inv \, \ss_1,
\end{equation*}
of class~$2$.
Assume now that $\ff$ has class~$2$, say $\ff = \sh(\ff') \ss_1$, and~$\gg$ has class~$1$, say~$\gg = \sh(\gg')$. Using again that $\ss_1$ commutes with~$\sh^2(\ff')$ and $\ss_1^2 = 1$, we find
\begin{equation*}
\ff \op \gg = \sh(\ff') \, \ss_1 \, \sh^2(\gg') \, \ss_1 \ss_2  \, \sh^2(\ff')\inv = \sh(\ff') \, \sh^2(\gg') \, \ss_2  \, \sh^2(\ff')\inv,
\end{equation*}
explicitly of class~$1$. Finally, assume that $\ff$ has class~$2$, say $\ff = \sh(\ff') \ss_1$, and~$\gg$ has class~$2$, say~$\gg = \sh(\gg') \ss_1$. Using now $\ss_1\ss_2\ss_1\ss_2 = \ss_2\ss_1$, we find
\begin{equation*}
\ff \op \gg = \sh(\ff') \, \ss_1 \, \sh^2(\gg') \, \ss_2\ss_1 \ss_2 \, \sh^2(\ff')\inv = \sh(\ff') \, \sh^2(\gg') \, \ss_2  \, \sh^2(\ff')\inv \, \ss_1,
\end{equation*}
of class~$1$. Thus, applying~$\op$ to permutations in~$\Sii$ yields permutations in~$\Sii$, that is, $\Sii$ is closed under~$\op$. Because $\id$ belongs to~$\Sii$ (it has class~$1$), the substructure of~$(\Si, \op)$ generated by~$\id$, which is~$\Sisp$, is included in~$\Sii$.

\ITEM2 Let $\equiv$ denote class equality on~$\Sii$. The above three computations show that $\equiv$ is compatible with the operation~$\op$, and that the table of~$\Sii{/}{\equiv}$ coincides with that of the Laver table~$\AA_1$, as displayed in Fig.~\ref{F:Laver}.
\end{proof}

Composing with the projection of braids to permutations, we deduce a partial answer to Question~\ref{Q:Quot} in the case of~$\AA_1$: 

\begin{coro}
Call a braid~$\br$ of class~$\nn$ if, in any braid diagram representing~$\br$, the strand starting at position~$1$ finishes at position~$\nn$. Let~$\Bii:= \{\br \in \Bi \mid \cl\br \le\nobreak 2\}$. Then $\Bii$ is closed under~$\op$ and includes~$\Bisp$. Class equality is a congruence on~$(\Bii, \op)$ and~$(\Bisp, \op)$, and the quotient is the Laver table~$\AA_1$.
\end{coro}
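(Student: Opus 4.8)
The plan is to deduce the whole statement from Proposition~\ref{P:Class} by transporting it along the shelf morphism $\perm\colon(\Bi,\op)\to(\Si,\op)$ of the ``permutation shelf'' proposition. The one fact to record at the outset is the compatibility of the two notions of class: for every braid~$\br$ we have $\cl\br = \cl{\perm(\br)}$. Indeed, by the defining convention for~$\perm$, the strand finishing at position~$\ii$ starts at position~$\perm(\br)(\ii)$; hence the strand starting at position~$1$ finishes at the position~$\nn$ for which $\perm(\br)(\nn)=1$, that is, at $\nn = \perm(\br)\inv(1) = \cl{\perm(\br)}$. In particular the braid class is a genuine invariant, independent of the chosen diagram (since $\perm(\br)$ is), and $\Bii = \perm\inv(\Sii)$.

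With this identity in hand, closure of~$\Bii$ under~$\op$ and the inclusion $\Bisp \subseteq \Bii$ are immediate. For closure: if $\br_1,\br_2 \in \Bii$, then $\perm(\br_1),\perm(\br_2)\in\Sii$, and since $\perm$ is a morphism, $\perm(\br_1\op\br_2)=\perm(\br_1)\op\perm(\br_2)$ lies in~$\Sii$ by Proposition~\ref{P:Class}\ITEM1; hence $\br_1\op\br_2\in\Bii$. For the inclusion: $\perm$ carries special braids to special permutations (it maps~$1$ to~$\id$ and commutes with~$\op$), so $\perm(\Bisp)\subseteq\Sisp\subseteq\Sii$ by Proposition~\ref{P:Class}\ITEM1; thus every special braid has class~$\le 2$.

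That class equality~$\equiv$ is a congruence on $(\Bii,\op)$, and therefore on the subshelf $(\Bisp,\op)$, follows by pulling back the congruence of Proposition~\ref{P:Class}\ITEM2: if $\cl{\br_i}=\cl{\br_i'}$ for $i=1,2$, then $\cl{\perm(\br_i)}=\cl{\perm(\br_i')}$, so by the congruence on~$\Sii$ we get $\cl{\perm(\br_1)\op\perm(\br_2)}=\cl{\perm(\br_1')\op\perm(\br_2')}$, i.e.\ $\cl{\br_1\op\br_2}=\cl{\br_1'\op\br_2'}$. Finally, the class map takes both values~$1$ and~$2$ on~$\Bii$, and already on~$\Bisp$, witnessed by the special braids $1$ (class~$1$) and $\sig1 = 1\op1$ (class~$2$); via $\cl\br=\cl{\perm(\br)}$ and the morphism property, the operation induced on the resulting two-element quotient is read off entry by entry from the $\Sii{/}{\equiv}$ table, which is the Laver table~$\AA_1$ by Proposition~\ref{P:Class}\ITEM2. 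Hence $\Bii{/}{\equiv}\cong\AA_1$, and likewise $\Bisp{/}{\equiv}\cong\AA_1$.

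There is no genuine obstacle here: the corollary is a purely functorial transfer along~$\perm$, and the entire content of the argument lies in Proposition~\ref{P:Class}. The only point requiring care is the index bookkeeping in $\cl\br=\cl{\perm(\br)}$, where one must not confuse $\perm(\br)$ with its inverse; once that geometric identity is fixed, each assertion of the corollary is the exact image of the corresponding assertion about~$\Sii$ and~$\Sisp$.
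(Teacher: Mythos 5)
Your proposal is correct and follows exactly the paper's intended route: the paper derives this corollary in one line by ``composing with the projection of braids to permutations,'' which is precisely your functorial transfer of Proposition~\ref{P:Class} along~$\perm$ using the identity $\cl\br=\cl{\perm(\br)}$. All the details you supply (closure, the inclusion $\Bisp\subseteq\Bii$, the pulled-back congruence, and the identification of the quotient with~$\AA_1$) are accurate.
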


This answer is not fully satisfactory, in that the considered equivalence relation is defined only on a proper subset of~$\Bi$, namely~$\Bii$. However, the situation with~$\AA_2$ is worse:

\begin{fact}
{\it There is no congruence on~$(\Sisp, \op)$ such that the quotient is isomorphic to a Laver table~$\AA_\nn$ with~$\nn \ge 2$.} 
\end{fact}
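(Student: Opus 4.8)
The plan is to rephrase the statement as the non-existence of a surjective shelf morphism $\pi\colon(\Sisp,\op)\to\AA_\nn$ for $\nn\ge2$, and to derive a contradiction by exhibiting two special braids that $\perm$ identifies but that any such $\pi$ must separate.

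First I would pin down the image of the generator. Since $\id$ generates $\Sisp$, the element $\pi(\id)$ generates $\AA_\nn$. Now $\AA_\nn$ has a \emph{unique} generator, namely~$1$: using the standard Laver-table facts that $2^\nn$ is a left unit and that $\xx\op\yy>\xx$ whenever $\xx<2^\nn$, one sees that $\{\aa,\aa+1\wdots2^\nn\}$ is closed under~$\op$ for every $\aa\ge2$, so no element $\aa\ge2$ can ever produce~$1$. Hence $\pi(\id)=1$. Next I would pass to the free shelf: composing $\pi$ with the projection $\perm\colon(\Bisp,\op)\to(\Sisp,\op)$ yields a morphism $\Bisp\to\AA_\nn$ sending the generator~$1$ of~$\Bisp$ to $1\in\AA_\nn$, and since $(\Bisp,\op)$ is free on~$\{1\}$ (Prop.~\ref{P:Free}) this composite is \emph{the} evaluation morphism $\ee_\nn$ that reads a special braid as a term and evaluates it at~$1$ in~$\AA_\nn$. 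Thus $\ee_\nn=\pi\comp\perm$ is constant on the fibres of $\perm$, and to finish it suffices to find special braids $\uu,\vv$ with $\perm(\uu)=\perm(\vv)$ but $\ee_\nn(\uu)\ne\ee_\nn(\vv)$.

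For the pair I would take $\uu:=1\op((1\op1)\op1)$ and $\vv:=((1\op1)\op1)\op(1\op1)$, that is $\uu=1\op1_{[3]}$ and $\vv=1_{[3]}\op1_{[2]}$. From $\id_{[2]}=\ss_1$, $\id_{[3]}=\ss_2$ and the relations $\ss_\ii^2=1$ and $\ss_1\ss_3=\ss_3\ss_1$ one checks that $\perm(\uu)=\ss_3\ss_1=\perm(\vv)$ (both are the double transposition $(1\,2)(3\,4)$), so $\perm$ identifies $\uu$ and $\vv$. To compare the images I would compose $\ee_\nn$ with the canonical projection $\AA_\nn\to\AA_2$, which exists for $\nn\ge2$; this reduces the computation to~$\AA_2$, uniformly in~$\nn$. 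In~$\AA_2$ one has $1_{[2]}=2$ and $1_{[3]}=3$, whence $\uu\mapsto 1\op3=2$ and $\vv\mapsto 3\op2=4$; as $2\ne4$, already the $\AA_2$-images differ, so $\ee_\nn(\uu)\ne\ee_\nn(\vv)$. This contradicts the factorisation and proves the Fact.

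The main obstacle is precisely the choice of $\uu,\vv$. Most equal-permutation collisions of special braids are also collapsed in $\AA_\nn$, because the right-hand argument evaluates to the top element $2^\nn$, which is right-absorbing ($\xx\op2^\nn=2^\nn$) and therefore erases any difference coming from the left; the cycle relation~\eqref{E:Cycle} is exactly such an absorbed collision and hence does \emph{not} obstruct $\AA_\nn$. One must instead engineer a collision whose right argument maps to a non-absorbing element (here $1_{[3]}\mapsto3\ne2^\nn$ while $1_{[2]}\mapsto2$), which is what forces $\uu$ and $\vv$ apart. A secondary point to nail down is the uniqueness of the generator of $\AA_\nn$, for which the monotonicity $\xx\op\yy>\xx$ on $\AA_\nn$ is the clean input.
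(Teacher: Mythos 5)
Your proposal is correct and follows essentially the same route as the paper: force $\pi(\id)=1$ by unique generation, reduce to $\AA_2$ via the canonical projection, and separate the very same witness pair $\id\op\id_{[3]}=\ss_3\ss_1=\id_{[3]}\op\id_{[2]}$ using $1\op3=2\ne4=3\op2$ in~$\AA_2$. The detour through $\Bisp$ and freeness, and the explicit justification that $1$ is the unique generator of~$\AA_\nn$, are harmless elaborations of what the paper takes for granted.
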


\begin{proof}
As $\AA_2$ is a quotient of every table~$\AA_\nn$ with $\nn \ge 2$, it suffices to consider~$\AA_2$. Assume that $\phi$ is a morphism from~$(\Sisp, \op)$ to~$\AA_2$. As $\id$ is the unique generator of~$(\Sisp, \op)$ and $1$ is the unique generator of~$\AA_2$, we necessarily have $\phi(\id) = 1$. Now, we see on Fig.~Ê\ref{F:Laver} that, in~$\AA_2$, we have $1 \op 3 = 2$ and $3 \op 2 = 4$, hence
\begin{equation}
1 \op 1_{[3]} \not= 1_{[3]} \op 1_{[2]},
\end{equation}
whereas, in~$\Sisp$, we read in Fig.~\ref{F:Perm}
\begin{equation}\label{E:PermObst}
\id \op \id_{[3]} = \ss_3 \ss_1 = \id_{[3]} \op \id_{[2]}:
\end{equation}
this contradicts the assumption that $\phi$ is a homomorphism.
\end{proof}

The above fact does not prove that there is no answer to Question~\ref{Q:Quot}, but it shows that, for~$\nn \ge 2$, the relation~$\equiv_\nn$ cannot be defined in terms of the associated permutations. It might be interesting to try further quotients of the braid groups~\cite{Cox}, or linear representations. Typically, considering the Burau representation of Section~\ref{SS:Burau} leads to

\begin{ques}[\bf Burau]
Is the Laver table~$\AA_2$ a quotient of $(\mathrm{GL}_\infty(\ZZZZ[t, t\inv]), \op)$?
\end{ques}

At the least, writing~$\II$ for the identity matrix, one checks $\II \op \II_{[3]} \not= \II_{[3]} \op \II_{[2]}$, so the obstruction~\eqref{E:PermObst} in~$\Si$ vanishes in~$\mathrm{GL}_\infty(\ZZZZ[t, t\inv])$.

\section{Extensions}\label{S:Ext}

We conclude with hints about extensions of the braid operation~$\op$ to larger structures. In Subsection~\ref{SS:Charged}, we describe the group~$\CBi$ of charged braids, a natural extension of~$\Bi$, in which extra space enables one to realize free shelves on any number of generators. Next, in Subsection~\ref{SS:Ext}, we describe the monoid~$\EBi$ of extended braids, in which a second, associative operation exists beside the selfdistributive one. Finally, we mention in Subsection~\ref{SS:Par} the group of parenthesized braids~$\PBi$, also called the braided Thompson group~$\widehat{BV}$, another extension of~$\Bi$ equipped with a selfdistributive operation.

\subsection{Charged braids}\label{SS:Charged}

By Prop.~\ref{P:Free}, every braid generates under the selfdistributive operation~$\op$ a substructure that is a free monogenerated left-shelf. Plenty of space is left aside: for instance, we saw that no generator~$\sig\ii$ with $\ii \ge 2$ belongs to the substructure~$\Bisp$ generated by~$1$.

\begin{ques}[\bf free family]\label{Q:FreeFam}
{\it Does $(\Bi, \op)$ include a free left-shelf of rank~$2$, that is, does there exists a cardinal~$2$ free family in~$(\Bi, \op)$ (in the sense of Def.~\ref{D:Free})?}
\end{ques}

A negative answer is likely. Indeed, one can show~\cite{Dgb} that, for all~$\br_1, \br_2, \br$ in~$\Bi$ such that $\br$ is special, $\br_1 \op \br^{[\mm]} = \br_2 \op \br^{[\mm]}$ holds for~$\mm$ large enough, whereas, if $\{\br_1, \br_2\}$ is free in~$(\Bi, \op)$, then $\br_1 \op \br = \br_2 \op \br$ holds for no~$\br$ in~$\langle\br_1, \br_2\rangle$. Therefore, if $\{\br_1, \br_2\}$ is a free family, the subshelf $\langle\br_1, \br_2\rangle$ generated by~$\br_1$ and~$\br_2$ contains~no special braid. So, in particular, there exists no braid~$\br$ such that $\{1, \br\}$ is free.

In order to possibly construct a braid realization for free left-shelves of rank larger than~$1$, it is therefore natural to introduce extensions of~$\Bi$ in which enough space is explicitly granted. A first solution was described by D.\,Larue in~\cite{Lrb} using a (very large) algebraic extension. Here we briefly mention the alternative solution of~\cite{Dfn}, which is simpler and admits a natural topological description.

\begin{defi}[\bf charged braids]
For~$\nn \ge 1$, we let~$\CBR\nn$ be the extension of~$\BR\nn$ obtained by adding mutually commuting elements~$\rho_1 \wdots \rho_\nn$ subject to the relations
\begin{equation}
\sig\ii \rho_\jj = \rho_\jj \sig\ii \text{\quad for $\jj < \ii$ or $\jj \ge \ii + 2$}, \qquad \sig\ii \rho_\ii \rho_{\ii + 1} = \rho_\ii \rho_{\ii + 1} \sig\ii.
\end{equation}
\end{defi}

In topological terms, $\CBR\nn$ is the group of isotopy classes of enhanced braid diagrams, in which the strands wear integer-valued charges and $\rho_\ii$ corresponds to adding an elementary charge~$+1$ on the $\ii$th strand, see Fig.~\ref{F:Charged}. The rule is that charges freely move on the strands, but a charge on the $\ii$th strand may move through a crossing~$\sigg\ii{\pm1}$ if, and only if, a similar charge on the $(\ii + 1)$st strand simultaneously does, see Fig.~\ref{F:Charged}. 

\begin{figure}[htb]
$$\begin{picture}(42,16)(0,0)
\psline[linewidth=2pt]{c->}(0,16)(28,16)(36,8)(42,8)
\psline[linewidth=2pt]{c-}(0,0)(4,0)(12,8)(16,8)
\psline[linewidth=2pt,border=4pt]{c->}(0,8)(4,8)(12,0)(16,0)(24,8)(28,8)(36,16)(42,16)
\psline[linewidth=2pt,border=4pt]{c->}(16,8)(24,0)(42,0)
\put(1,9.5){$\scriptstyle\oplus$}
\put(0,1.5){$\scriptstyle\oplus\oplus$}
\put(12,9.5){$\scriptstyle\ominus\ominus$}
\put(24,17.5){$\scriptstyle\oplus\oplus$}
\put(36,9.5){$\scriptstyle\ominus\ominus$}
\put(37,17.5){$\scriptstyle\ominus$}
\put(48.5,7.5){$\sim$}
\end{picture}
\hspace{16mm}
\begin{picture}(36,16)(0,0)
\psline[linewidth=2pt]{c->}(0,16)(23,16)(30,8)(34,8)
\psline[linewidth=2pt](0,0)(2,0)(10,8)(12,8)
\psline[linewidth=2pt,border=4pt]{c->}(0,8)(2,8)(10,0)(12,0)(20,8)(22,8)(30,16)(34,16)
\psline[linewidth=2pt,border=4pt]{c->}(12,8)(20,0)(34,0)
\put(0,1.5){$\scriptstyle\oplus$}
\put(10,1.5){$\scriptstyle\oplus$}
\put(10,9.5){$\scriptstyle\ominus$}
\put(20,17.5){$\scriptstyle\oplus$}
\put(20,9.5){$\scriptstyle\ominus$}
\put(30,9.5){$\scriptstyle\ominus$}
\end{picture}
$$
\caption{\sf Typical charged braid diagrams, represented with~$\oplus$ on the $\ii$th strand for~$\rho_\ii$, and $\ominus$ for~$\rho_\ii\inv$: here the diagrams encoded by $\rho_1^2 \rho_2 \sig1 \rho_2^{-2} \sig1 \rho_3^2 \siginv2 \rho_3\inv \rho_2^{-2}$, corresponding to evaluating~$(\rho_1^2 \op \rho_1) \op 1$ with the operation of  Prop.~\ref{P:Charged}, and the equivalent diagram $\rho_1 \sig1 \rho_1 \rho_2\inv \sig1 \rho_3 \rho_2\inv \siginv2 \rho_2\inv$, in which two initial $\oplus$ charges have been moved rightwards through~$\sig1$, one cancelling one of the~$\ominus$, and, similarly, two final $\ominus$ charges have been moved leftwards through~$\siginv2$, one cancelling one of the~$\oplus$.}
\label{F:Charged}
\end{figure}

The group~$\BR\nn$ embeds in~$\CBR\nn$. On the other hand, erasing charges, that is, collapsing all generators~$\rho_\ii$ defines a projection~$\pi_\nn$ from~$\CBR\nn$ to~$\BR\nn$, which is a retraction of the embedding. Note that $\Ker(\pi_\nn)$ is, already for $\nn = 2$, quite big: mapping an even length sequence of integers $(\ee_0 \wdots \ee_{2\ell}, \ee)$ to 
$$\rho_1^{\ee_0} \sig1 \rho_1^{\ee_1} \siginv1 \rho_1^{\ee_2} \sig1 \rho_1^{\ee_3} \siginv1 \pdots \siginv1 \rho_1^{\ee_{2\ell}} \rho_2^\ee$$
produces pairwise distinct elements of~$\Ker(\pi_2)$ since the only eligible relations consist in moving the final~$\rho_2^\ee$ and shifting some~$\rho_1^{\ee_\ii}$ accordingly.

We denote by~$\CBi$ the direct limit of the groups~$\CBR\nn$ when $\nn$ grows to infinity.

\begin{prop}[\bf charged braid shelf]\label{P:Charged}
Extend the operation~$\op$ from~$\Bi$ to~$\CBi$ by defining $\sh(\rho_\ii) := \rho_{\ii + 1}$ for every~$\ii$ and keeping~\eqref{E:BraidOp}. Then $(\CBi, \op)$ is a left-shelf, and, for every~$\nn \ge 2$, the family~$\{1, \rho_1 \wdots \rho_1^{\nn - 1}\}$ is free in~$(\CBi, \op)$.
\end{prop}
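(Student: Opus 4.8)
The first assertion, that $(\CBi,\op)$ is a left-shelf, I would obtain by rerunning verbatim the computation of the braid shelf proposition. That computation reduces the law~$\LD$ to exactly two facts: that $\sig1$ commutes with every element of~$\Im(\sh^2)$, and that $\sig1\sig2\sig1\siginv2=\sig2\sig1$. The second is a braid relation, still available in~$\CBi$. For the first, $\Im(\sh^2)$ is now generated by the $\sig\kk$ and the $\rho_\kk$ with $\kk\ge3$; the braid relations give $\sig1\sig\kk=\sig\kk\sig1$ for $\kk\ge3$, and the charged relation $\sig\ii\rho_\jj=\rho_\jj\sig\ii$ for $\jj\ge\ii+2$ gives $\sig1\rho_\kk=\rho_\kk\sig1$ for $\kk\ge3$. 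Hence $\sig1$ commutes with all of~$\Im(\sh^2)$, and the same lines as before yield~$\LD$. I would also record that $\sh$ is injective on~$\CBi$, so the cancellation argument of Prop.~\ref{P:LeftTr}(1) shows $(\CBi,\op)$ is again left cancellative.

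For the freeness statement, write $g_\ii:=\rho_1^{\ii}$ for $0\le\ii\le\nn-1$ (so $g_0=1$). Since $(\CBi,\op)$ obeys~$\LD$, the universal property of the free left-shelf~$\Free_\nn$ on generators $\xx_0\wdots\xx_{\nn-1}$ yields a shelf morphism $\psi\colon\Free_\nn\to\CBi$ with $\psi(\xx_\ii)=g_\ii$, whose image is the subshelf $\langle g_0\wdots g_{\nn-1}\rangle$. By definition of freeness, the family is free exactly when $\psi$ is injective, so the whole problem is to separate the images of $\eqLD$-inequivalent terms.

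The plan is to reconstruct a term from its $\psi$-value by an extended strand colouring. First I would extend the colouring action of Subsection~\ref{SS:Action} from $\Bi$ to $\CBi$: given any left cancellative left-shelf~$(\SS,\op)$ equipped with a shelf automorphism~$\phi$, let $\sig\ii$ act on $\SS^\infty$ by~\eqref{E:Action1}--\eqref{E:Action2} and let $\rho_\ii$ act by applying $\phi$ to the $\ii$th coordinate. A direct check shows these coordinate operations obey all defining relations of the charged braid groups---the only nonobvious one, $\sig\ii\rho_\ii\rho_{\ii+1}=\rho_\ii\rho_{\ii+1}\sig\ii$, holds precisely because $\phi$ commutes with~$\op$---so, as in Lemma~\ref{L:Action}, one gets a partial action of~$\CBR\nn$ on~$\SS^\infty$. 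I would then specialise $\SS$ to the free shelf $\Free_\ZZZZ$ on a two-sided family $\{\yy_\kk\mid\kk\in\ZZZZ\}$ and $\phi$ to the shift $\yy_\kk\mapsto\yy_{\kk+1}$, which is a shelf automorphism, and colour with the constant sequence $\vec\cc:=(\yy_0,\yy_0\pdots)$. An induction on the term~$\TT$, strictly parallel to Lemma~\ref{L:RecSpec}, then gives
\begin{equation*}
\vec\cc\act\psi(\TT)=(V(\TT),\,\yy_0,\,\yy_0\pdots),
\end{equation*}
where $V\colon\Free_\nn\to\Free_\ZZZZ$ is the morphism $\xx_\ii\mapsto\yy_\ii$: the base case is $\vec\cc\act g_\ii=(\phi^\ii(\yy_0),\yy_0\pdots)=(\yy_\ii,\yy_0\pdots)$, and in the inductive step the first coordinate produces $V(\TT_1)\op V(\TT_2)=V(\TT_1\op\TT_2)$ while the conjugating factor $\sh(\psi(\TT_1))\inv$ restores the tail to $(\yy_0,\yy_0\pdots)$. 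Since $\{\yy_0\wdots\yy_{\nn-1}\}$ is a subfamily of the free generating family of~$\Free_\ZZZZ$, it is itself free, so $V$ is injective; hence $\psi(\TT)=\psi(\TT')$ forces $V(\TT)=V(\TT')$ and then $\TT=\TT'$, which is the injectivity of~$\psi$.

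The main obstacle is the well-definedness of the extended partial action: one must know that $\vec\cc\act\beta$ depends only on the charged braid $\beta\in\CBi$ and not on the chosen word, which is the charged counterpart of the (nontrivial) Lemma~\ref{L:Action}. I would obtain it by adapting the reversing analysis of~\cite[Sec.~3.1]{Dgb} to the charged generators, the new point being only that each $\rho_\ii$ acts by an automorphism supported on a single coordinate, disjoint from all but the adjacent~$\sig$. Everything else---the reconstruction identity and the injectivity of~$V$---is then routine. As a quick consistency check that the generators cannot collapse, I would note the affine shelf morphism $C\colon(\CBi,\op)\to(\ZZZZ[q],\ \uu\op\vv:=(1-q)\uu+q\vv)$ sending $\rho_\jj^{\pm1}$ to $\pm q^{\jj-1}$ and $\sig\ii$ to~$0$, which is well defined because $C(\sh(\cdot))=q\,C(\cdot)$ and because the affine operation obeys~$\LD$; under it $C(g_\ii)=\ii$, so $C$ already separates the generators and rules out any relation $g_\ii\op\beta=g_\jj\op\beta$ with $\ii\ne\jj$.
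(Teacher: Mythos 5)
Your verification that $(\CBi,\op)$ is a left-shelf is correct and is exactly what the paper intends: the computation for $\Bi$ only uses that $\sig1$ commutes with $\Im(\sh^2)$ and the relation $\sig1\sig2\sig1\siginv2=\sig2\sig1$, and both survive in $\CBi$ thanks to the relation $\sig\ii\rho_\jj=\rho_\jj\sig\ii$ for $\jj\ge\ii+2$. For the freeness assertion, however, you take a genuinely different route from the paper. The paper's (sketched) proof is order-theoretic: it extends the rank-one freeness criterion of Lemma~\ref{L:Freeness} to rank $\ge2$, using the fact that free left-shelves of higher rank are lexicographic extensions of monogenerated ones \cite[Prop.~V.6.6]{Dgd}; the acyclicity hypothesis then transfers for free, since the charge-erasing projection $\pi:\CBi\to\Bi$ is a shelf morphism, so a cycle for $\prefs$ in $\CBi$ would project to a cycle in $\Bi$, contradicting Lemma~\ref{L:Acyclic}. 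Your route instead reconstructs a term from its image via an enriched colouring action, in the spirit of Lemma~\ref{L:RecSpec}; the reconstruction identity $\vec\cc\act\psi(\TT)=(V(\TT),\yy_0,\dots)$ is correctly set up (the base case, the role of the automorphism $\phi$ in the relation $\sig\ii\rho_\ii\rho_{\ii+1}=\rho_\ii\rho_{\ii+1}\sig\ii$, and the use of left cancellativity of the free shelf are all right), and your affine morphism $C$ is a correct, pleasant sanity check, though it only separates the generators and cannot by itself rule out relations between compound terms.

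The genuine gap is the one you flag yourself but do not close: the well-definedness of the partial action of $\CBi$ on $\SS^\infty$, i.e.\ the charged analogue of Lemma~\ref{L:Action}\,(ii). Your induction produces, for each term $\TT$, one specific word $\ww_\TT$ with $\vec\cc\act\ww_\TT=(V(\TT),\yy_0,\dots)$; to deduce $V(\TT)=V(\TT')$ from $\psi(\TT)=\psi(\TT')$ you must know that two charged braid words representing the same element of $\CBi$ give the same colouring output whenever both are defined. For $\Bi$ this is the nontrivial content of \cite[Sec.~3.1]{Dgb}, proved via the Garside/reversing structure of positive braids; for $\CBi$ the mixed relations (in particular $\sig\ii\rho_\ii\rho_{\ii+1}=\rho_\ii\rho_{\ii+1}\sig\ii$, which blocks a naive normalisation pushing all charges to one side) mean the adaptation is not routine and is not supplied by any result quoted in the paper. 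So, as written, the hardest step of your argument is asserted rather than proved, whereas the paper's version outsources its hard step to an explicit citation. Either fill in the charged reversing analysis, or switch to the paper's strategy: acyclicity in $\CBi$ via the retraction $\pi$, plus the higher-rank criterion of \cite[Prop.~V.6.6]{Dgd}, with your map $C$ (or the $\rho_1$-exponent) serving to verify the separation hypothesis on the generators $1,\rho_1,\dots,\rho_1^{\nn-1}$.
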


The proof relies on extending the freeness criterion of Lemma~\ref{L:Freeness} to rank~$\ge 2$, which is not very hard as, essentially, free shelves of higher rank are sort of lexicographic extensions of rank~$1$ free shelves~\cite[Prop.~V.6.6]{Dgd}.

Let us mention that nothing is known about the following informal question, related to Question~\ref{Q:FreeFam}:

\begin{ques}[\bf generators]\label{Q:Gen}
Does the structure~$(\Bi, \op)$ admit a natural family of generators?
\end{ques}

\subsection{Extended braids}\label{SS:Ext}

Another extension of potential interest in topology involves the monoid of ``extended braids''~\cite{Dgb}. In a number of (left) shelves~$(\SS, \op)$, there exists a second, associative operation~$\comp$, admitting a unit~Ê$1$ and connected with~$\op$ by the mixed laws
\begin{gather}\label{E:Second1}
(\xx \comp \yy) \op \zz = \xx \op (\yy \op \zz), \qquad \xx \op (\yy \comp \zz) = (\xx \op \yy) \comp (\xx \op \zz)\\ \label{E:Second2}
\xx \comp \yy = (\xx{\,\op\,}\yy) \comp \xx,  \qquad 1 \op \xx = \xx, \qquad \xx \op 1 = 1.
\end{gather}
In particular, if $\GG$ is a group and $\op$ is the conjugation defined by $\xx \op \yy:= \xx\yy\xx\inv$, then the multiplication of~$\GG$ provides such a second operation. 

Whereas the selfdistributive operation can be used to label the strands of a (braid or knot) diagram, such a second operation can be used to label the regions between the strands using the convention that crossing from left to right a strand labelled~$\aa$ multiplies the region label by~$\aa$. Then the first law of~\eqref{E:Second2} expresses the coherence of region and strand colourings, according to the scheme
\begin{equation}\label{E:ColRegion}
\VR(13,11)\begin{picture}(21,0)(-5,7)
\psline[linewidth=2pt]{c->}(-6,1)(1,1)(16,16)(25,16)
\psline[linewidth=2pt,border=4pt]{c->}(-6,16)(1,16)(16,1)(25,1)
\put(-5,2.5){$\aa$}
\put(20,17.5){$\aa$}
\put(-5,17.5){$\bb$}
\put(19,2.5){$\aa\op\bb$}
\psline[linewidth=0.5pt,arrowsize=3pt]{->}(5,15)(1,11)\put(7,16.5){$\xx$}
\psline[linewidth=0.5pt,arrowsize=3pt]{->}(11,15)(15,11) \put(-7,7.5){$\bb {\,\comp\,} \xx$}
\psline[linewidth=0.5pt,arrowsize=3pt]{->}(15,5)(11,1)\put(16,7.5){$\aa {\,\comp\,} \xx$}
\psline[linewidth=0.5pt,arrowsize=3pt]{->}(1,5)(5,1) \put(-8,-3){$\aa {\,\comp\,}\bb {\,\comp\,} \xx =   (\aa\op\bb) {\,\comp\,} \aa {\,\comp\,} \xx$}
\end{picture}
\end{equation}
It is worth noting that the first law of~\eqref{E:Second2} and the associativity of~$\comp$ imply that the map $(\aa, \xx) \mapsto \aa \comp \xx$ turns~$\SS$ into an $\SS$-module (or $\SS$-set), so the region labelling of~\eqref{E:ColRegion} is a particular case of the shadow colorings from knot theory~\cite{Kam}. The second operation~$\comp$ can also be used to color braids with zip and unzip vertices; in this case all the mixed laws for~$\comp$ and~$\op$ receive a topological meaning, appearing as algebraic distillations of R-moves for knotted graphs~\cite{BBF}.

It is therefore natural to look for a possible second operation on~$(\Bi, \op)$. First, the operation~$\comp$ cannot be the group multiplication, nor any group operation on~$\Bi$, since \eqref{E:Second2} would then imply $\xx \op \yy = \xx \comp \yy \comp \xx\inv$, which contradicts $\op$ not being idempotent. Hence, the best we could expect is a monoid structure that properly extends the group structure of~$\Bi$. 

\noindent\begin{minipage}{\textwidth}\rightskip40mm
\hspace{19pt}\VR(3.5,0)Such a structure exists, and it can be defined as a (partial) topological completion of~$\Bi$~\cite{Dgb}. Hereafter, we denote by~$\tau_{\pp, \nn}$ the positive braid where $\nn$~strands cross over $\pp$~strands. So $\tau_{\pp, 0}$ is~Ê$1$, and $\tau_{2,3}$ is shown on the right.\hfill
\begin{picture}(0,0)(-14,0)
\psline[linewidth=2pt,border=2pt]{c->}(0,0)(2,0)(18,12)(23,12)
\psline[linewidth=2pt,border=2pt]{c->}(0,4)(2,4)(18,16)(23,16)
\psline[linewidth=2pt,border=2pt]{c->}(0,8)(2,8)(18,0)(23,0)
\psline[linewidth=2pt,border=2pt]{c->}(0,12)(2,12)(18,4)(23,4)
\psline[linewidth=2pt,border=2pt]{c->}(0,16)(2,16)(18,8)(23,8)
\put(-7,1.5){$\pp\left\{\vbox to 4mm{\vfill}\right.$}
\put(-7,11.5){$\nn\left\{\vbox to 5mm{\vfill}\right.$}
\end{picture}
\end{minipage}

\begin{lemm}[\bf topology]\label{L:Dist}
For~$\br_1, \br_2$ in~$\Bi$, put $\dist(\br_1, \br_2):= 0$ for $\br_1 = \br_2$ and $\dist(\br_1, \br_2):=\nobreak 2^{-\pp}$ for $\br_1\inv \br_2$ in~$\Im(\sh^\pp) \setminus\nobreak \Im(\sh^{\pp - 1})$. Then $\dist$ is an ultrametric distance on~$\Bi$. For every braid~$\br$ and every~$\pp$, the sequence $(\br\tau_{\pp, \nn})_{\nn \ge 0}$ is a Cauchy sequence without limit in~$\Bi$. Furthermore, the limits of~$(\br\tau_{\pp, \nn})_{\nn \ge 0}$ and ~$(\brr\tau_{\qq, \nn})_{\nn \ge 0}$ (in a completion) coincide if, and only if, we have $\pp = \qq$ and $\br\inv \brr \in \BR\pp$.
\end{lemm}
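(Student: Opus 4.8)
The plan is to organize everything around the single structural fact that the images $\Im(\sh^\pp)$ form a strictly decreasing chain of subgroups of~$\Bi$ with $\bigcap_\pp\Im(\sh^\pp)=\{1\}$: indeed a nontrivial braid lies in some~$\BR\NN$, and $\BR\NN\cap\Im(\sh^\NN)=\{1\}$ because these two parabolic subgroups commute elementwise and meet trivially. Hence for $\br_1\ne\br_2$ there is a well-defined largest~$\pp$ with $\br_1\inv\br_2\in\Im(\sh^\pp)$, and $\dist$ is well-defined. For part~\ITEM1 I would first record that $\dist$ is left-invariant, since $(\gg\br_1)\inv(\gg\br_2)=\br_1\inv\br_2$; symmetry holds because each $\Im(\sh^\pp)$ is a subgroup, hence closed under inverse, so $\br_1\inv\br_2$ and $\br_2\inv\br_1$ fall in the same images; and the ultrametric inequality follows from $\br_1\inv\br_3=(\br_1\inv\br_2)(\br_2\inv\br_3)$ together with the fact that $\Im(\sh^{\min(\pp,\qq)})$ is a subgroup containing both factors.

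For the Cauchy assertion of part~\ITEM2, the computation to run is the recursion $\tau_{\pp,\nn+1}=\tau_{\pp,\nn}\cdot\sh^\nn(\sig\pp\sig{\pp-1}\pdots\sig1)$, coming from the block description $\tau_{\pp,\nn}=\prod_{\jj=1}^{\nn}\sh^{\jj-1}(\sig\pp\pdots\sig1)$. It gives $\tau_{\pp,\nn}\inv\tau_{\pp,\nn+1}=\sig{\pp+\nn}\pdots\sig{\nn+1}\in\Im(\sh^\nn)$, so consecutive terms are at distance~$2^{-\nn}$; by left-invariance and the ultrametric inequality the whole sequence $(\br\tau_{\pp,\nn})_\nn$ is Cauchy. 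To exclude a limit, suppose $\br\tau_{\pp,\nn}\to\br^*\in\Bi$ and set $\uu:=\br\inv\br^*\in\BR\NN$. Left-invariance gives $\tau_{\pp,\nn}\inv\uu\in\Im(\sh^{\rr_\nn})$ with $\rr_\nn\to\infty$, whence $\tau_{\pp,\nn}=\uu\cdot\sh^{\rr_\nn}(\ee)\inv$ for some~$\ee$. Now follow the strand issuing from position~$1$: in $\tau_{\pp,\nn}$ it terminates at position $\nn+1$, whereas in $\uu\cdot\sh^{\rr_\nn}(\ee)\inv$ it stays among positions $\{1,\pdots,\NN\}$ while crossing $\uu$ and is then fixed by $\sh^{\rr_\nn}(\ee)\inv$ (which fixes every position $\le\rr_\nn\ge\NN$), so it ends at a position $\le\NN$. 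For $\nn\ge\NN$ this is absurd, so no limit exists in~$\Bi$.

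For part~\ITEM3, I would use left-invariance to reduce the coincidence of the two limits to the condition $\dist(\tau_{\pp,\nn},\cc\,\tau_{\qq,\nn})\to0$, where $\cc:=\br\inv\brr$, since two Cauchy sequences have equal limits in the completion exactly when the distance between their $\nn$th terms tends to~$0$. The direction $(\Leftarrow)$ is easy: if $\pp=\qq$ and $\cc\in\BR\pp$, then conjugation by the rigid block braid carries the first-block parabolic $\BR\pp$ onto its shifted copy, so $\tau_{\pp,\nn}\inv\cc\,\tau_{\pp,\nn}=\sh^\nn(\cc)\in\Im(\sh^\nn)$ and the distance is $\le 2^{-\nn}\to0$. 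To force $\pp=\qq$, I would project to~$\Si$: one computes $\perm(\tau_{\pp,\nn})(\kk)=\pp+\kk$ and $\perm(\tau_{\qq,\nn})(\kk)=\qq+\kk$ for $\kk\le\nn$, and since $\cc$ has bounded support, the permutation of $\tau_{\pp,\nn}\inv\cc\,\tau_{\qq,\nn}$ can fix arbitrarily long initial segments only if $\pp=\qq$.

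There remains the heart of the matter: with $\pp=\qq$, deduce $\cc\in\BR\pp$ from $\dist(\tau_{\pp,\nn},\cc\,\tau_{\pp,\nn})\to0$. The plan is to examine the conjugating automorphism $\gg\mapsto\tau_{\pp,\nn}\inv\gg\,\tau_{\pp,\nn}$ of~$\BR{\pp+\nn}$: it sends the internal generators $\sig1,\pdots,\sig{\pp-1}$ of the first block to the high-index $\sig{\nn+1},\pdots,\sig{\nn+\pp-1}$, but sends the boundary generator $\sig\pp$ to a band generator linking positions~$1$ and~$\nn+\pp$, and sends the second-block generators $\sig{\pp+1},\sig{\pp+2},\pdots$ to the low-index $\sig1,\sig2,\pdots$. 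Hence if $\cc\notin\BR\pp$, i.e. $\cc$ genuinely involves a strand beyond the first~$\pp$, then $\tau_{\pp,\nn}\inv\cc\,\tau_{\pp,\nn}$ essentially involves either the first strand (through the band generator) or a low-index generator $\sig{\ii-\pp}$, at an index bounded independently of~$\nn$; thus it stays outside $\Im(\sh^{\NN-\pp})$ for all large~$\nn$, keeping the distance bounded below and contradicting convergence. I expect the genuine obstacle here to be making ``essentially involves'' precise, namely establishing the parabolic-subgroup fact that an element of~$\BR\NN$ which lies in the subgroup generated by $\BR\pp$ together with braids supported on strands of index $>\NN$ must already lie in~$\BR\pp$. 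This is where the real work sits, and I would attack it either through the structure theory of parabolic subgroups of braid groups or through a direct normal-form/Artin-representation argument of the kind used in Lemma~\ref{L:Acyclic}.
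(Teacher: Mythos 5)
Your handling of the ultrametric axioms, of the Cauchy property via $\tau_{\pp,\nn}\inv\tau_{\pp,\mm}\in\Im(\sh^\nn)$, of the non-existence of a limit by tracking the strand issuing from position~$1$, and of the implication from $\pp=\qq$ and $\br\inv\brr\in\BR\pp$ to coincidence of the limits are all correct, and the last of these rests on exactly the conjugation formula $\tau_{\pp,\nn}\inv\br'\tau_{\pp,\nn}=\sh^\nn(\br')$ for $\br'\in\BR\pp$ that the paper singles out as the key point (the paper offers no further detail). The permutation computation forcing $\pp=\qq$ also works. The genuine gap --- which you flag yourself --- is the remaining implication that $\dist(\tau_{\pp,\nn},\cc\,\tau_{\pp,\nn})\to0$ forces $\cc\in\BR\pp$, and the route you sketch for it does not close as stated: after identifying $\Im(\sh^{\rr_\nn})\cap\BR{\pp+\nn}$ with $\langle\sig{\rr_\nn+1}\wdots\sig{\pp+\nn-1}\rangle$ and conjugating back by~$\tau_{\pp,\nn}$, the image of the boundary generator~$\sig\nn$ is a band generator joining positions~$1$ and~$\pp+\nn$, which lies neither in~$\BR\pp$ nor among braids supported on strands of index $>\NN$. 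So the parabolic-subgroup fact you propose to prove, namely $\BR\NN\cap\langle\BR\pp,\Im(\sh^\NN)\rangle=\BR\pp$ (which is in fact easy, since the two factors commute elementwise and you already know $\BR\NN\cap\Im(\sh^\NN)=\{1\}$), does not apply to the subgroup your element actually lands in.

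The gap closes more cheaply by an ultrametric upgrade you have not exploited. Both $(\tau_{\pp,\nn})_\nn$ and $(\cc\,\tau_{\pp,\nn})_\nn$ are Cauchy with the explicit rate $\dist(\tau_{\pp,\nn},\tau_{\pp,\mm})\le 2^{-\nn}$ for $\mm\ge\nn$, so the ultrametric inequality along the chain $\tau_{\pp,\nn},\ \tau_{\pp,\mm},\ \cc\,\tau_{\pp,\mm},\ \cc\,\tau_{\pp,\nn}$, combined with $\dist(\tau_{\pp,\mm},\cc\,\tau_{\pp,\mm})\to0$, yields $\dist(\tau_{\pp,\nn},\cc\,\tau_{\pp,\nn})\le 2^{-\nn}$ for \emph{every}~$\nn$, not merely membership in $\Im(\sh^{\rr_\nn})$ for some uncontrolled $\rr_\nn\to\infty$. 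Hence $\tau_{\pp,\nn}\inv\cc\,\tau_{\pp,\nn}\in\Im(\sh^\nn)\cap\BR{\pp+\nn}$ for all $\nn\ge\NN-\pp$. The only external input then needed is the parabolic intersection $\Im(\sh^\nn)\cap\BR{\pp+\nn}=\sh^\nn(\BR\pp)=\langle\sig{\nn+1}\wdots\sig{\nn+\pp-1}\rangle$ (obtainable from the $\sig\ii$-positivity trichotomy of Prop.~\ref{P:Order} together with the result of~\cite{Dfo} that $\sigma$-definiteness is witnessed inside the ambient~$\BR\mm$, or from the general theory of parabolic subgroups); writing $\tau_{\pp,\nn}\inv\cc\,\tau_{\pp,\nn}=\sh^\nn(\br')$ with $\br'\in\BR\pp$ and applying the conjugation formula once more gives $\cc=\br'\in\BR\pp$, bypassing the band generator entirely.
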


The key point in the proof is the conjugation formula $\tau_{\pp, \nn}\inv \br \tau_{\pp, \nn} = \sh^\nn(\br)$, which holds for every~$\nn$ and every~$\br$ in~$\BR\pp$, as a picture similar to Fig.\ref{F:Trick} shows. Using the last statement in Lemma~\ref{L:Dist}, it is easy to describe a completion:

\begin{prop}[\bf completion]
For $(\br, \pp), (\brr, \qq)$ in~$\Bi \times \NNNN$, declare $(\br, \pp) \equiv\nobreak (\brr, \qq)$ for $\pp = \qq$ and $\br\inv \brr \in \BR\pp$, write $[\br, \pp]$ for the $\equiv$-class of~$(\br, \qq)$, put $\dist([\br, \pp], [\brr, \qq]):= \lim_{\nn\to\infty}\dist(\br \tau_{\pp, \nn}, \brr\tau_{\qq, \nn})$, and define~$\EBi$ to be the family of all $\equiv$-classes. Then mapping~$\br$ to~$[\br, 0]$ identifies~$\Bi$ with a dense subset of~$\EBi$, and the latter is obtained by adding~$[\br, \pp]$ as the limit of $(\br\tau_{\pp, \nn})_{\nn \ge 0}$ for all~$\br$ and~$\pp$. 
\end{prop}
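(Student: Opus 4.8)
The plan is to show that $(\EBi,\dist)$ is a completion of $(\Bi,\dist)$, i.e. to check four things: that $\dist$ is a well-defined ultrametric on the set of $\equiv$-classes, that $\iota:\br\mapsto[\br,0]$ is an isometric embedding, that its image is dense with each $[\br,\pp]$ arising as the announced limit, and finally that $\EBi$ is complete. All but the last are formal consequences of the preceding Lemma; the completeness is where the real content lies.

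First I would settle well-definedness of the metric. Since $(\br\tau_{\pp,\nn})_\nn$ and $(\brr\tau_{\qq,\nn})_\nn$ are Cauchy by the Lemma, the reals $\dist(\br\tau_{\pp,\nn},\brr\tau_{\qq,\nn})$ form a Cauchy sequence, via $|\dist(x_\nn,y_\nn)-\dist(x_\mm,y_\mm)|\le\dist(x_\nn,x_\mm)+\dist(y_\nn,y_\mm)$, so they converge to a value in the closed discrete set $\{0\}\cup\{\,2^{-r}\mid r\ge 0\,\}$; hence the defining limit exists. Independence of the chosen representatives follows because $(\br,\pp)\equiv(\br',\pp)$ gives, through the conjugation formula $\tau_{\pp,\nn}\inv\br\tau_{\pp,\nn}=\sh^\nn(\br)$, the estimate $\dist(\br\tau_{\pp,\nn},\br'\tau_{\pp,\nn})\le 2^{-\nn}\to0$, so replacing a representative does not change the limit. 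Symmetry and the ultrametric inequality pass to the limit from $\Bi$, and positivity is precisely the last clause of the Lemma: $\dist([\br,\pp],[\brr,\qq])=0$ means the two Cauchy sequences share a limit, which holds iff $\pp=\qq$ and $\br\inv\brr\in\BR\pp$, that is, iff $[\br,\pp]=[\brr,\qq]$.

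Next, the embedding and density are immediate. As $\tau_{0,\nn}=1$ for every $\nn$, one gets $\dist([\br,0],[\brr,0])=\lim_\nn\dist(\br,\brr)=\dist(\br,\brr)$, so $\iota$ is isometric; it is injective because $(\br,0)\equiv(\brr,0)$ forces $\br\inv\brr\in\BR0=\{1\}$. For density it suffices to prove the announced convergence $\iota(\br\tau_{\pp,\nn})\to[\br,\pp]$. Using left-invariance of $\dist$ on $\Bi$ (which also shows that left translations descend to isometries of $\EBi$), I compute
\[
\dist\bigl(\iota(\br\tau_{\pp,\nn}),[\br,\pp]\bigr)=\lim_{\mm}\dist(\br\tau_{\pp,\nn},\br\tau_{\pp,\mm})=\lim_{\mm}\dist(\tau_{\pp,\nn},\tau_{\pp,\mm}),
\]
and the right-hand side tends to $0$ as $\nn\to\infty$ exactly because $(\tau_{\pp,\mm})_\mm$ is Cauchy. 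Hence every $[\br,\pp]$ lies in the closure of $\iota(\Bi)$, and for $\pp\ge1$ it is a genuinely new point since $[\br,\pp]\notin\iota(\Bi)$.

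The hard part will be completeness, which is the substance of the phrase ``$\EBi$ is obtained by adding these limits''. By the density just established it is enough to show that every Cauchy sequence $(\gamma_\kk)_\kk$ in $\Bi$ converges in $\EBi$ to some $[\br,\pp]$. After normalizing by a left translation and passing to a subsequence with $\gamma_\kk\inv\gamma_{\kk+1}\in\Im(\sh^\kk)$, the task is to read off from $(\gamma_\kk)$ a \emph{finite} braid $\br$ (a stabilizing ``head'' living at bounded indices) together with an integer $\pp$ (morally the number of strands sent off to infinity), and then to verify that the cosets $\gamma_\kk\Im(\sh^\jj)$ agree, for every $\jj$ and all large $\kk$, with the stabilized cosets $\br\tau_{\pp,\nn}\Im(\sh^\jj)$. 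The conjugation formula is again the main lever, reducing this comparison to the action of $\sh^\nn$ on $\BR\pp$; but extracting the pair $(\br,\pp)$ canonically requires a normal form for braids that controls how letters are distributed across the filtration $\Im(\sh^\jj)$ (for instance a Garside or band-generator normal form, or the handle-reduction machinery already invoked above). I expect this structural matching to be the only nontrivial step, the metric and density assertions being direct corollaries of the Lemma.
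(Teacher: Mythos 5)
Your first three paragraphs are correct and they already cover everything the proposition asserts; this matches the paper, which gives no written argument beyond the remark that the statement follows easily from the last clause of the preceding Lemma. Your verification that the limit defining $\dist$ on classes exists, is independent of representatives (via the conjugation formula $\tau_{\pp,\nn}\inv\br\tau_{\pp,\nn}=\sh^\nn(\br)$ for $\br\in\BR\pp$), and separates points exactly when the classes differ, together with the isometric embedding $\br\mapsto[\br,0]$ and the convergence $\br\tau_{\pp,\nn}\to[\br,\pp]$, is sound and complete.

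The problem is your last paragraph. The clause ``$\EBi$ is obtained by adding $[\br,\pp]$ as the limit of $(\br\tau_{\pp,\nn})_{\nn\ge0}$'' describes \emph{which} new points are adjoined --- precisely the limits of these particular Cauchy sequences --- and does not assert that $\EBi$ is complete; the paragraph preceding the proposition is explicit that $\EBi$ is only a \emph{partial} topological completion of~$\Bi$. The completeness you set out to prove is in fact false, so the ``hard part'' you isolate cannot be carried out by any normal-form or Garside-theoretic argument. Indeed, $\EBi$ is the disjoint union of the countable sets $\Bi{/}\BR\pp$, hence countable, whereas $(\Bi,\dist)$ admits uncountably many pairwise inequivalent Cauchy sequences: for $S\subseteq\ZZZZp$ put $\gamma^S_\kk:=\prod_{\ii\in S,\ \ii\le\kk}\sig{2\ii}$ (a product of commuting generators). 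Each such sequence is Cauchy since $(\gamma^S_\kk)\inv\gamma^S_{\kk+1}\in\Im(\sh^{2\kk+1})$, but for $S\ne S'$ and $\ii_0=\min(S\bigtriangleup S')$ the quotient $(\gamma^S_\kk)\inv\gamma^{S'}_{\kk'}$ equals $\sig{2\ii_0}^{\pm1}$ times an element of $\Im(\sh^{2\ii_0+1})$ whenever $\kk,\kk'\ge\ii_0$; its permutation moves position~$2\ii_0$, so it does not lie in $\Im(\sh^{2\ii_0})$ and $\dist(\gamma^S_\kk,\gamma^{S'}_{\kk'})$ stays bounded away from~$0$. Thus distinct subsets yield distinct limit points in the abstract completion, and most Cauchy sequences in~$\Bi$ converge to nothing in~$\EBi$. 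Deleting your final paragraph leaves a correct proof of the statement as written.
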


\noindent\begin{minipage}{\textwidth}\rightskip40mm
\hspace{19pt}\VR(3,0)The elements of~$\EBi$ are called ``\emph{extended braids}''. The introduction of~$[\br, \pp]$ as the limit of the braids~$\br\tau_{\pp, \nn}$ when $\nn$ goes to~$\infty$ makes it natural to associate with the extended braid~$[\br, \pp]$ the diagram shown on the right, where $\pp$~strands vanish at infinity.\hfill
\begin{picture}(0,0)(-8,-2)
\psline[linewidth=2pt,border=2pt]{c->}(0,0)(18,0)(24,6)(24,15)
\psline[linewidth=2pt,border=2pt]{c->}(0,4)(17,4)(20,7)(20,15)
\psline[linewidth=2pt,border=2pt]{c->}(0,8)(17,8)(25,0)(29,0)
\psline[linewidth=2pt,border=2pt]{c->}(0,12)(17,12)(25,4)(29,4)
\put(19,15.5){$\overbrace{\hbox to 3mm{\hfill}}^{\textstyle\pp}$}
\psframe[linewidth=0.5pt,fillstyle=solid,fillcolor=white](2,-2)(15,10)
\put(7.5,3){$\br$}
\put(8,14){$\vdots$}
\end{picture}
\end{minipage}

\VR(3.5,0)Now the nice feature is that $\EBi$ possesses a rich algebraic structure:

\begin{prop}[\bf operations]
For $[\br, \pp], [\brr, \qq]$ in~$\EBi$, define
\begin{gather}
[\br, \pp] \cdot [\brr, \qq]:= [\br \cdot \sh^\pp(\brr), \pp + \qq],\\
\label{E:ExtOp2}
[\br, \pp] \, \oph\,  [\brr, \qq]:= [\br \cdot \sh^\pp(\brr) \cdot \tau_{\pp, \qq} \cdot \sh^\qq(\br)\inv, \qq].
\end{gather}
\ITEM1 $(\EBi, \cdot, 1)$ is a monoid; $\Bi$ is its group of units, and $\EBi$ is obtained from~$\Bi$ by adding a unique additional element, namely $\tau:= [1, 1]$, subject to the relations
$$\sig1 \tau^2 = \tau^2 \quad \text{and} \quad \sig\ii \tau = \tau \sig{\ii - 1} \text{\quad for $\ii \ge 2$}.$$
\ITEM2 $(\EBi, \oph)$ is a left-shelf; $(\EBi, \oph, \cdot)$ obeys the mixed laws~\eqref{E:Second1} and \eqref{E:Second2}.
\end{prop}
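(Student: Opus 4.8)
The plan is to prove the three assertions---well-definedness of both operations, the monoid structure of part~\ITEM1, and the shelf/mixed-law structure of part~\ITEM2---in turn, with the whole argument resting on a short preliminary lemma about the braids~$\tau_{\pp,\qq}$. The identities I would isolate first are: \emph{(a)} $\tau_{\pp,\qq}\in\BR{\pp+\qq}$; \emph{(b)} the conjugation formula $\tau_{\pp,\qq}\inv\,\br\,\tau_{\pp,\qq}=\sh^\qq(\br)$ for $\br\in\BR\pp$ used in the proof of Lemma~\ref{L:Dist}; \emph{(c)} its companion $\tau_{\pp,\qq}\inv\,\sh^\pp(\br)\,\tau_{\pp,\qq}=\br$ for $\br\in\BR\qq$, which says that the upper block of $\qq$ strands is carried order-preservingly onto the lower one; and the two ``hexagon'' splittings \emph{(d)} $\tau_{\pp+\qq,\rr}=\sh^\pp(\tau_{\qq,\rr})\,\tau_{\pp,\rr}$ and \emph{(e)} $\tau_{\pp,\qq+\rr}=\tau_{\pp,\qq}\,\sh^\qq(\tau_{\pp,\rr})$, obtained by crossing the upper strands over the two sub-blocks in two stages. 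All of \emph{(a)}--\emph{(e)} are read off from the same kind of braid picture as Fig.~\ref{F:Trick}. Throughout I also use that $\BR\mm$ commutes with $\sh^\mm(\Bi)$, the two involving disjoint sets of generators. \textbf{Well-definedness:} for $\comp$, replacing $\br$ by $\br\cc$ with $\cc\in\BR\pp$ only inserts $\sh^\pp(\brr)\inv\cc\,\sh^\pp(\brr)=\cc\in\BR{\pp+\qq}$, and replacing $\brr$ by $\brr\cc'$ with $\cc'\in\BR\qq$ appends $\sh^\pp(\cc')\in\BR{\pp+\qq}$, both absorbed by $\equiv$ at charge $\pp+\qq$. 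For $\oph$, changing the left entry by $\cc\in\BR\pp$ gives, using \emph{(b)}, the extra factor $\sh^\qq(\cc)\sh^\qq(\cc)\inv=1$, so the value is \emph{unchanged}; changing the right entry by $\cc'\in\BR\qq$ produces, via \emph{(c)}, the difference $\sh^\qq(\br)\,\cc'\,\sh^\qq(\br)\inv=\cc'\in\BR\qq$, again absorbed at charge~$\qq$.

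\textbf{Part~\ITEM1.} Associativity of $\comp$ is the one-line check that both bracketings equal $[\aa\,\sh^\pp(\bb)\,\sh^{\pp+\qq}(\cc),\ \pp+\qq+\rr]$, using $\sh^\pp\comp\sh^\qq=\sh^{\pp+\qq}$ and that $\sh^\pp$ is a homomorphism; $[1,0]$ is a two-sided unit. Since the charge is additive under~$\comp$, an element $[\br,\pp]$ can be invertible only if $\pp=0$, so the units are exactly $\{[\br,0]\}\cong\Bi$. Writing $\tau:=[1,1]$ one gets $\tau^\pp=[1,\pp]$ and $[\br,\pp]=\br\,\tau^\pp$, so $\Bi\cup\{\tau\}$ generates. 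The stated relations hold: $\sig\ii\tau=[\sig\ii,1]=[\sh(\sig{\ii-1}),1]=\tau\sig{\ii-1}$ for $\ii\ge2$, and $\sig1\tau^2=[\sig1,2]=[1,2]=\tau^2$ because $\sig1\in\BR2$. To see these relations \emph{present} $\EBi$, I would push every $\tau$ to the right using $\tau\br=\sh(\br)\tau$ (the relation $\sig\ii\tau=\tau\sig{\ii-1}$ read generatorwise), reducing any word to a normal form $\br\,\tau^\pp$; and I would verify that the only collapses forced are $\br\,\tau^\pp=\br'\tau^\pp$ for $\br\inv\br'\in\BR\pp$, since $\sig\jj\,\tau^\pp=\tau^\pp$ for $1\le\jj\le\pp-1$ follows by shifting $\sig\jj$ down to $\sig1$ and invoking $\sig1\tau^2=\tau^2$. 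Matching this normal form against the description of~$\EBi$ as the classes $[\br,\pp]$ yields the isomorphism.

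\textbf{Part~\ITEM2.} I would establish the mixed laws directly and then obtain the law~$\LD$ for free. The unit laws are immediate: $\tau_{\pp,0}=1$ gives $\xx\oph 1=1$ and $\tau_{0,\qq}=1$ gives $1\oph\xx=\xx$. For the first law of~\eqref{E:Second2}, expanding $(\xx\oph\yy)\comp\xx$ gives $[\aa\,\sh^\pp(\bb)\,\tau_{\pp,\qq},\,\pp+\qq]$, which equals $\xx\comp\yy=[\aa\,\sh^\pp(\bb),\,\pp+\qq]$ precisely because $\tau_{\pp,\qq}\in\BR{\pp+\qq}$ by~\emph{(a)}; this is the clean conceptual point. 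For the first law of~\eqref{E:Second1}, expanding both sides and cancelling the common prefix $\aa\,\sh^\pp(\bb)\,\sh^{\pp+\qq}(\cc)$ and suffix $\sh^\rr(\aa)\inv$ reduces it to $\tau_{\pp+\qq,\rr}\,\sh^{\pp+\rr}(\bb)\inv=\sh^\pp(\tau_{\qq,\rr})\,\sh^{\pp+\rr}(\bb)\inv\,\tau_{\pp,\rr}$, which is \emph{(d)} plus the commutation of $\tau_{\pp,\rr}\in\BR{\pp+\rr}$ with $\sh^{\pp+\rr}(\bb)$. The second law of~\eqref{E:Second1} reduces likewise to $\sh^{\pp+\qq}(\cc)\,\tau_{\pp,\qq+\rr}=\tau_{\pp,\qq}\,\sh^{\pp+\qq}(\cc)\,\sh^\qq(\tau_{\pp,\rr})$, which is \emph{(e)} plus the commutation of $\tau_{\pp,\qq}$ with $\sh^{\pp+\qq}(\cc)$. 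Finally $\LD$ is a formal consequence: from $(\xx\oph\yy)\comp\xx=\xx\comp\yy$ and $(\uu\comp\vv)\oph\ww=\uu\oph(\vv\oph\ww)$ one gets $(\xx\oph\yy)\oph(\xx\oph\zz)=((\xx\oph\yy)\comp\xx)\oph\zz=(\xx\comp\yy)\oph\zz=\xx\oph(\yy\oph\zz)$, so $(\EBi,\oph)$ is a left-shelf.

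\textbf{Main obstacle.} The conceptual crux, and the step I expect to cost the most care, is the preliminary lemma \emph{(a)}--\emph{(e)}: the geometric content is clear from pictures, but matching the \emph{shift exponents} exactly---so that, e.g., the factors $\sh^{\pp+\rr}(\bb)\inv$ and $\sh^{\pp+\qq}(\cc)$ land on the correct side of $\tau_{\pp,\rr}$ resp.\ $\tau_{\pp,\qq}$ and commute through---is where index bookkeeping can silently go wrong. Once \emph{(a)}--\emph{(e)} are in place everything else is forced, and deriving $\LD$ from the mixed laws removes any need for a separate associativity-style computation for~$\oph$.
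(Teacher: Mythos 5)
Your argument is correct, and it is worth noting that the paper itself offers no proof of this proposition: it is stated as a survey result, with the construction deferred to~\cite{Dgb}, the only hint in the text being the conjugation formula $\tau_{\pp,\nn}\inv\,\br\,\tau_{\pp,\nn}=\sh^\nn(\br)$ for $\br\in\BR\pp$ mentioned after Lemma~\ref{L:Dist} (your identity~(b)). Your proof is therefore a genuine reconstruction, and it is organized efficiently: everything reduces to the two splittings $\tau_{\pp+\qq,\rr}=\sh^\pp(\tau_{\qq,\rr})\,\tau_{\pp,\rr}$ and $\tau_{\pp,\qq+\rr}=\tau_{\pp,\qq}\,\sh^\qq(\tau_{\pp,\rr})$ together with the commutation of $\BR\mm$ with $\sh^\mm(\Bi)$, and I have checked that the exponent bookkeeping in the two reductions for~\eqref{E:Second1} comes out exactly as you state. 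Two features deserve emphasis. First, deriving the law~$\LD$ formally from the first laws of~\eqref{E:Second1} and~\eqref{E:Second2} rather than by direct expansion is a real economy, and it matches the spirit of the paper's Subsection~\ref{SS:Ext}, where the mixed laws are presented as the primary structure. Second, the only place where your argument remains a sketch is the presentation claim in~\ITEM1: the reduction to the normal form $\br\,\tau^\pp$ and the derivation of $\sig\jj\,\tau^\pp=\tau^\pp$ for $\jj\le\pp-1$ are right, but the assertion that no further collapses are forced needs the standard completion (a surjection from the abstractly presented monoid onto~$\EBi$ that is injective on normal forms); since distinct classes $[\br,\pp]$ are distinguished exactly by $\pp$ and the coset $\br\,\BR\pp$, this closes without difficulty, but it should be said. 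No gap otherwise.
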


Point~\ITEM1 implies the equality $[\br, \pp] = \br \, \tau^\pp$ for all~$\br$ and~$\pp$. By construction, $\EBi$ is the disjoint union of the groups~$\Bi{/}\BR\pp$. As $\BR0$ and $\BR1$ are trivial, $\EBi$ includes two copies of~$\Bi$, namely~$\Bi$, and $\Bi\tau:= \{\br \tau \mid \br \in\nobreak \Bi\}$. Then the selfdistributive operation~$\oph$ leaves each layer~$\Bi{/}\BR\pp$ stable. On~$\Bi$, \eqref{E:ExtOp2} cooks down to group conjugacy: $\br \oph \brr = \br \brr \br\inv$. By constrast, on the second copy~$\Bi\tau$, \eqref{E:ExtOp2} yields $\br\tau \oph \brr\tau = \br \, \sh(\brr) \, \sig1 \, \sh(\br)\inv \, \tau$, in which we recognize the operation~$\op$ of~\eqref{E:BraidOp} on~$\Bi$. 

So, at the expense of embedding~$\Bi$ into a (moderately) larger space, we obtained a complete positive solution to the initial two operations problem.

\subsection{Parenthesized braids}\label{SS:Par}

We conclude with still another extension of the braid group~$\Bi$ that contains an interesting selfdistributive structure, namely the braided Thompson group~$\BVhat$ of~\cite{Bri1, Bri2}, also described as the group of parenthesized braids and denoted~$\PBi$ in~\cite{Dhe}.

\begin{prop}[\bf parenthesized braids]\label{P:ParBraid}
Let~$\PBi$ be the extension of~$\Bi$ obtained by adding an infinite sequence of generators~$\aaa1, \aaa2$,... subject to
\begin{gather} \label{E:Relations}
\sig{i+1} \sig i \aaa{i+1} = \aaa i \sig i \text{\quad and \quad} \sig i \sig{i+1} \aaa i = \aaa{i+1} \sig i \text{\quad for every~$\ii$},\\
\sig i \aaa j  = \aaa j \sig i, \quad \aaa i \aaa{j-1} = \aaa j \aaa i, \text{\quad and \quad}  \aaa i \sig{j-1} = \sig j \aaa i \text{\quad for $\jj \ge \ii +2$}.
\end{gather}
Extending~$\sh$ by $\sh(\aaa\ii) := \aaa{\ii + 1}$ for every~Ê$\ii$, define~$\op$ and~$\comp$ on~$\PBi$ by
\begin{equation}
\br \op \brr:= \br \cdot \sh(\brr) \cdot \sig1 \cdot \sh(\br)\inv \text{\quad and \quad} \br \comp \brr:= \br \cdot \sh(\brr) \cdot \aaa1.
\end{equation}
Then $(\PBi, \op)$ is a left-shelf, and $(\PBi, \op, \comp)$ obeys the mixed laws~\eqref{E:Second1}.
\end{prop}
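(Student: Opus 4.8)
The plan is to treat this exactly as the braid shelf proposition was treated: everything reduces to a finite list of relations in the group~$\PBi$, and the real content lies in isolating the right auxiliary identities before grinding through the three verifications (the law~$\LD$ for~$\op$, and the two mixed laws of~\eqref{E:Second1}). First I would check that $\sh$ is a well-defined endomorphism of~$\PBi$. The defining relations of~$\PBi$ are all parametrised uniformly by an index~$\ii$ (``for every~$\ii$'', or ``for $\jj \ge \ii + 2$''), so raising every index by one carries each relation to another relation of the same family; hence $\sig\ii \mapsto \sig{\ii + 1}$, $\aaa\ii \mapsto \aaa{\ii + 1}$ respects all relations and extends~$\sh$ to~$\PBi$.

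The crux is then a single conjugation identity, namely
\begin{equation*}
\aaa1 \cdot \sh(\ww) = \sh^2(\ww) \cdot \aaa1 \qquad \text{for every } \ww \in \PBi.
\end{equation*}
I would prove this by checking it on generators, where it amounts to $\aaa1 \sig\kk = \sig{\kk + 1}\aaa1$ and $\aaa1 \aaa\kk = \aaa{\kk + 1}\aaa1$ for $\kk \ge 2$---exactly the instances $\jj \ge \ii + 2$ of the relations $\aaa\ii \sig{\jj - 1} = \sig\jj \aaa\ii$ and $\aaa\ii \aaa{\jj - 1} = \aaa\jj \aaa\ii$ with $\ii = 1$---and then extending multiplicatively to all~$\ww$. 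Alongside this I would record two facts already available for~$\Bi$ and still valid here: that $\sig1$ commutes with every element of~$\Im(\sh^2)$ (since $\sig1$ commutes with each $\sig\jj$ and $\aaa\jj$ for $\jj \ge 3$), and the braid relation $\sig1 \sig2 \sig1 \siginv2 = \sig2 \sig1$. Together with the $\ii = 1$ instances $\sig2 \sig1 \aaa2 = \aaa1 \sig1$ and $\sig1 \sig2 \aaa1 = \aaa2 \sig1$ of the first family of relations, these are the only ingredients needed.

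With these in hand, the law~$\LD$ for~$\op$ is literally the computation of the braid shelf proposition: $\op$ has the same defining formula~\eqref{E:BraidOp}, and that verification used only the commutation of~$\sig1$ with~$\Im(\sh^2)$ and the relation $\sig1 \sig2 \sig1 \siginv2 = \sig2 \sig1$, both of which persist. For the two mixed laws I would expand both sides using the definitions of~$\op$ and~$\comp$ and the endomorphism property of~$\sh$, cancel the common outer factors $\xx\,\sh(\yy)$ and~$\sh(\xx)\inv$, and then use the conjugation identity to move every occurrence of~$\aaa1$ (or $\aaa2 = \sh(\aaa1)$) past the surrounding shifted factors. The first law $(\xx \comp \yy) \op \zz = \xx \op (\yy \op \zz)$ then reduces, through $\aaa1 \sig1 \aaa2\inv = \sig2 \sig1$, to the commutation of~$\sig1$ with~$\Im(\sh^2)$; the second law $\xx \op (\yy \comp \zz) = (\xx \op \yy) \comp (\xx \op \zz)$ reduces, after the analogous pull-through, to the single relation $\aaa2 \sig1 = \sig1 \sig2 \aaa1$.

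The one point demanding care is bookkeeping: the conjugation identity must be applied with the correct power of~$\sh$ so that the residual~$\aaa1$ and~$\aaa2$ sit in exactly the positions where the $\ii = 1$ relations $\sig2\sig1\aaa2 = \aaa1\sig1$ and $\sig1\sig2\aaa1 = \aaa2\sig1$ apply verbatim. There is no genuine conceptual obstacle beyond this; the substance of the argument is the recognition that the new generator~$\aaa1$ conjugates~$\Im(\sh)$ into~$\Im(\sh^2)$ via the shift, which is precisely what makes both the selfdistributivity of~$\op$ and its interaction with~$\comp$ work out.
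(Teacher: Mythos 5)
Your argument is correct, and it is the natural one: the paper itself states Prop.~\ref{P:ParBraid} without proof (deferring to the references on parenthesized braids), but your verification follows exactly the pattern of the paper's proof that $(\Bi,\op)$ is a left-shelf. The two ingredients you isolate are the right ones --- the conjugation identity $\aaa1\,\sh(\ww)=\sh^2(\ww)\,\aaa1$, which holds on generators by the $\ii=1$, $\jj\ge3$ relations and hence everywhere, together with the persistence of the commutation of~$\sig1$ with~$\Im(\sh^2)$ --- and the two mixed laws do reduce, as you say, to the $\ii=1$ instances $\sig2\sig1\aaa2=\aaa1\sig1$ and $\sig1\sig2\aaa1=\aaa2\sig1$ respectively.
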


In the (large) group~$\PBi$, the elements~$\sig\ii$ generate a copy of~$\Bi$, whereas the elements~$\aaa\ii$ generate a copy of R.\,Thompson's group~$F$~\cite{CFP}. Topologically, the elements of~$\PBi$ can be thought of as braids in which the distance between adjacent strands need not be constant. Then $\aaa\ii$ corresponds to moving the strand(s) at position~$\ii +\nobreak 1$ or infinitely close to position~$\ii + \varepsilon$ with $\varepsilon$ infinitely small (but not merging them: a subsequent~$\aaa\ii\inv$ may separate them back). It is shown in~\cite{Dhe} and in~\cite{Bri2} that $\PBi$ is a group of right fractions for the submonoid~$\PBi^+$ generated by the $\sig\ii$s and the $\aaa\jj$s, and that the latter is a Zappa--Szep product of the monoids~$\Bi^+$ and~$F^+$. This implies that every element of~$\PBi$ can be expressed as~$\ff\inv \br \gg$, with $\br$ in~$\Bi$ and $\ff, \gg$ in~$F^+$, resulting in diagrams like the one shown in Fig.~\ref{F:Par}.

\begin{figure}[htb]
$$\begin{picture}(40,16)(0,0)
\psline[linewidth=2pt]{c-}(0,8)(8,16)(16,8)
\psline[linewidth=2pt]{c-}(0,0)(16,0)(32,16)
\psline[linewidth=2pt,border=3pt]{c->}(0,1)(8,8)(16,16)(24,16)(32,8)(40,8)
\psline[linewidth=2pt,border=3pt]{->}(16,8)(24,0)(40,0)
\psline[linewidth=2pt](0,0)(16,0)
\psline[linewidth=2pt]{->}(32,16)(40,9)
\end{picture}$$
\caption{\sf Typical parenthesized braid diagram witnessing for the decomposition of~$\PBi^+$ as a Zappa--Szep product of~$\Bi^+$ and~$F^+$, here $\aaa1\inv \siginv2 \sig1 \sig2 \aaa2$: diverging, braiding, and finally merging.}
\label{F:Par}
\end{figure}

The selfdistributive structure on~$\PBi$ is not so perfect as the one on~$\EBi$ in that the second operation is not associative, and the laws of~\eqref{E:Second2} all fail in~$(\PBi, \op, \comp)$. However, calling~$\PBisp$ the closure of~$1$ under~$\op$ and~$\comp$ in~$\PBi$, one obtains decompositions of arbitrary parenthesized braids into special ones as in~Prop.~\ref{P:Decomp}. From there, one can order~$\PBi$, and, calling ``augmented left-shelf'' a left-shelf equipped with a second operation satisfying~\eqref{E:Second1}, establish that $\PBisp$ is a free monogenerated augmented left-shelf~\cite{Dhj, Dhl}.


\end{document}